\documentclass[a4paper]{amsart}
\usepackage{mathtools,booktabs,amssymb,stmaryrd,latexsym,epic,bbm,mathrsfs}
\usepackage{comment}
\usepackage{indentfirst,color}
\usepackage[none]{hyphenat}\usepackage{xcolor}
\usepackage[numbers,sort&compress]{natbib}

\numberwithin{equation}{section}
\newtheorem{theorem}{Theorem}[section]
\newtheorem{lemma}[theorem]{Lemma}
\newtheorem{corollary}[theorem]{Corollary}
\newtheorem{claim}{Claim}

\newtheorem{definition}[theorem]{Definition}
\newtheorem{proposition}[theorem]{Proposition}

\newfont{\bbb}{cmb10 at 11pt}
\usepackage[parfill]{parskip}
\makeatletter
\def\subsection{\@startsection{subsection}{2}%
  \z@{1.5ex \@plus .5ex \@minus .2ex}%
  {1ex}%
  {\normalfont\bfseries}}
\makeatother

\title{Modules over the Takiff Block type Lie algebra}
\author[S.~Xu]{Shuoyang Xu}
\address{School of Mathematics, Hohai University, Nanjing 210098, China}
\email{shuoyang@hhu.edu.cn}

\author[H.~Chen]{Haibo Chen$^\ast$}
\address{School of Science, Jimei University, Xiamen, Fujian 361021, China}
\email{hypo1025@jmu.edu.cn}
\thanks{$^\ast$ Corresponding author: hypo1025@jmu.edu.cn}
\subjclass[2020]{ 17B10; 17B35; 17B65}
\keywords{Takiff Block type Lie algebra, $U(\mathfrak h)$-free module, tensor product module, non-weight module}

\begin{document}
	\begin{abstract}
    In this paper, we introduce a family of infinite-dimensional Lie algebras $\mathcal B(q)$ of Takiff Block type, containing Heisenberg-Virasoro algebra, $W$-algebra $W(2,2)$, and BMS-Kac-Moody algebra as subalgebras. For $q\in\mathbb C^*$, we classify the $\mathcal B(q)$-modules that are free of rank one over $U(\mathfrak h)$, where $\mathfrak h=\mathbb C L_{0,0}\oplus\mathbb C W_{0,0}$, and determine their irreducibility and isomorphism classes. We also establish irreducibility and isomorphism criteria for their tensor products with irreducible restricted modules. Finally, restricting the above $\mathcal B(-1)$-modules to several natural subalgebras of the central quotient $\mathcal B(-1)/(\mathbb C C_1\oplus\mathbb C C_2)$ yields families of non-weight modules.
	\end{abstract}
	\maketitle

\tableofcontents

\section{Introduction}
Block introduced a class of infinite-dimensional simple Lie algebras in \cite{B}. Various generalizations, commonly called \textit{Lie algebras of Block type}, have since been studied; see, e.g., \cite{OZ,Z,SXX13,SXX12,CGZ14,CG13,XZ13,CY18}. Lie algebras of Block type are closely related to the Virasoro and Virasoro-like algebras, and some arise as special cases of generalized Cartan-type $S$ and $H$ Lie algebras~\cite{X}. To better understand the representation theory of generalized Cartan-type Lie algebras, it is natural to begin with such Block type examples, and in particular with their non-weight modules. Among these, \(U(\mathfrak h)\)-free modules constitute an important class.

These modules were introduced by Nilsson~\cite{Nil15} and were also studied from a different point of view in~\cite{TZ18}. Nilsson showed that a finite-dimensional simple Lie algebra admits nontrivial $U(\mathfrak h)$-free modules if and only if it is of type $A$ or $C$~\cite{Nil15,Nil16}. A family of rank one $U(\mathfrak h)$-free modules over the Virasoro algebra was constructed in~\cite{GLZ13}. Since then, such modules have been constructed for various Lie (super)algebras; see, e.g., \cite{TZ15,CG17,HCS17,CG26}. Notably, rank-one free modules over $W(2,2)$ and Lie algebras of Block type were classified in~\cite{CG17} and~\cite{CY18}, respectively.

The Takiff construction adjoins to a Lie algebra an abelian copy of itself~\cite{Tak71}. Modules over the Takiff algebra of $\mathfrak{sl}_2$ were studied in~\cite{Zhu24,Zhu25}, and tensor products of such modules were considered in~\cite{QZ26}. For the Virasoro algebra, the Takiff construction gives $W(2,2)$, while adjoining a nonnegative second index to its generators gives the Block type Lie algebra.
The algebra $\mathcal B(q)$ studied in the present paper combines these two constructions. It is the Takiff extension of the Block type Lie algebra and contains both $W(2,2)$ and the Block type Lie algebra as subalgebras. More precisely, its subalgebra spanned by the generators with second index zero is isomorphic to $W(2,2)$. Hence every rank-one free $\mathcal B(q)$-module restricts to a rank-one free $W(2,2)$-module. The classification problem is therefore to determine which rank-one free $W(2,2)$-modules extend to $\mathcal B(q)$ and how the generators with positive second index can act. The compatibility conditions depend on $q$ and may give rise to additional module structures.

In this paper, we classify all $\mathcal B(q)$-modules that are free of rank one over $U(\mathbb C L_{0,0}\oplus\mathbb C W_{0,0})$ and give explicit formulas for their actions. A family of particular interest arises when $q=1/n$ for some positive integer $n$. We also determine the irreducibility and isomorphism classes of these modules. Following the tensor product construction for Virasoro modules in~\cite{TZ13}, we further study their tensor products with irreducible restricted $\mathcal B(q)$-modules and establish corresponding irreducibility and isomorphism criteria.

As applications, we consider the restrictions of these modules to several natural subalgebras of \(\mathcal B(-1)/(\mathbb C C_1\oplus\mathbb C C_2)\), including the rank-one extended \(W(2,2)\) algebra, the Heisenberg-Virasoro algebra, and its Takiff Lie algebra, and thus obtain families of non-weight modules together with explicit irreducibility criteria. Finally, in light of the results of~\cite{CH20}, we construct admissible tensor product modules over the Heisenberg-Virasoro subalgebra.

Throughout this paper, we denote by $\mathbb{C},\mathbb{C}^{*},\mathbb{Z},\mathbb{Z}^*,\mathbb{N}$ and $\mathbb{Z}_+$  the sets of all complex numbers, nonzero complex numbers, integers, nonzero integers, non-negative integers, and positive integers respectively. All vector spaces, modules and Lie algebras are over $\mathbb{C}$.
For a Lie algebra $\mathcal{L}$, we use $U(\mathcal{L})$ to denote the universal enveloping algebra.

\section{The known results}
In this section, we recall a class of non-weight modules over the $W$-algebra $W(2,2)$ defined in \cite{CG17}.

The
$W$-algebra $W(2,2)$ was originally introduced in \cite{ZD}, where it served as a foundational tool for investigating the classification of vertex operator algebras generated by vectors of weight 2.
    \begin{definition}
The	{\bf  $W$-algebra $W(2,2)$} denoted by $\mathcal{W}$ is a complex Lie algebra that has a basis $\Big\{L_{m},W_{m},C_{1},C_{2}\mid m\in \mathbb{Z}\Big\}$ subject to the following Lie brackets:
	\begin{align*}
		&[L_{m},L_{n}]=(n-m)L_{m+n}+\frac{m^{3}-m}{12}\delta_{m+n,0}C_{1},\\
		&[L_{m},W_{n}]=(n-m)W_{m+n}+\frac{m^{3}-m}{12}\delta_{m+n,0}C_{2},\\
	&[W_{m},W_{n}]=[\mathcal{W},C_1]=[\mathcal{W},C_2]=0.
	\end{align*}
      \end{definition}

     For any $k\in\mathbb{N}$ and $n\in\mathbb{Z}$, define polynomials in $\mathbb{C}[t]$ as
     \begin{align*}h_{n,k;\alpha}(t)=nt^{k}-n(n-1)\alpha\frac{t^{k}-\alpha^{k}}{t-\alpha}.\end{align*}
 Let $\mathcal{T}_{\alpha}$ be the set of families of polynomials given by
 \begin{align*}
     \mathcal{T}_{\alpha}=\Big\{\{h_{m}(t)\mid m\in\mathbb{Z}\}\mid h_{m}(t)=\sum_{i=0}^{\infty}\xi_{i}h_{m,i;\alpha}(t),\xi_{i}\in\mathbb{C}\Big\}.
 \end{align*}
Note that, for a family $\mathbf h=\{h_m(t)\mid m\in\mathbb Z\}\in \mathcal{T}_{\alpha}$, we have
\begin{equation}\label{eq:hm-h1-relation}
h_m(t)=mh_1(t)-m(m-1)\alpha\frac{h_1(t)-h_1(\alpha)}{t-\alpha}.
\end{equation}
	 For $\alpha=0$, we have
	 $
	 \mathcal{T}_{0}=\Bigl\{\{mh(t)\}_{m\in\mathbb Z}\mid h(t)\in\mathbb C[t]\Bigr\}.
 $
 We recall the following two main results on $\mathcal{W}$ from \cite{CG17}.
 \begin{proposition}\label{pro2.2}
     The polynomial algebra $\Omega(\lambda,\alpha,\mathbf{h})=\mathbb{C}[t,s]$ is a $\mathcal{W}$-module for any $\alpha\in\mathbb{C},\lambda\in\mathbb{C}^{*}$ and $\mathbf{h}=\{h_{m}(t)\mid m\in\mathbb{Z}\}\in\mathcal{T}_{\alpha}$, with the following action:
     \begin{align*}
         &C_{1}f(t,s)=C_{2}f(t,s)=0,\  W_{m}f(t,s)=\lambda^{m}(t-m\alpha)f(t,s-m),\\
      &   L_{m}f(t,s)=\lambda^{m}(s+h_{m}(t))f(t,s-m)-m\lambda^{m}(t-m\alpha)\frac{\partial}{\partial t}f(t,s-m).
     \end{align*}
      Moreover, $\Omega(\lambda,\alpha,\mathbf{h})$ is simple if and only if $\alpha\neq0$.

 \end{proposition}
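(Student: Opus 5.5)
The plan is to check the module axioms by direct computation on $\mathbb{C}[t,s]$, and then to treat simplicity separately in the cases $\alpha\neq0$ and $\alpha=0$. Since $C_1,C_2$ act by zero, the cocycle terms never enter. So it suffices to check the three relations $[W_m,W_n]=0$, $[L_m,W_n]=(n-m)W_{m+n}$ and $[L_m,L_n]=(n-m)L_{m+n}$ as operator identities. I write $\sigma_m$ for the shift $f(t,s)\mapsto f(t,s-m)$. The first relation is immediate, because multiplication by polynomials in $t$ commutes with shifts in $s$.

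For $[L_m,W_n]$, I apply $L_mW_n$ and $W_nL_m$ to $f$. The multiplicative parts are $(s+h_m)(t-n\alpha)$ in one order and $(t-n\alpha)(s-n+h_m)$ in the other, and they differ by $n(t-n\alpha)$. The derivative parts differ by $-m(t-m\alpha)$, coming from $\partial_t(t-n\alpha)=1$. Adding these gives $(n-m)t-(n^2-m^2)\alpha=(n-m)(t-(m+n)\alpha)$, which is exactly the coefficient of $W_{m+n}$.

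For $[L_m,L_n]$ I expand both orders in the same way. The terms that are quadratic in $\partial_t$ cancel by symmetry. The first-order derivative terms combine to $-(n-m)(m+n)\bigl(t-(m+n)\alpha\bigr)\partial_t$, using the same computation as above. The multiplicative part reduces to the requirement
\[
n h_m-m h_n+n(t-n\alpha)h_m'-m(t-m\alpha)h_n'=(n-m)h_{m+n}.
\]
The main obstacle is this identity for $\mathbf h\in\mathcal T_\alpha$. Since the identity is linear in $\mathbf h$, it suffices to check it for each family $h_{m,k;\alpha}$. Writing $h_{m,k;\alpha}(t)=m t^k-m(m-1)\alpha\sum_{j=0}^{k-1}t^j\alpha^{k-1-j}$, I compare coefficients of $t^j$, which gives polynomial identities in $m,n$. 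Alternatively I use \eqref{eq:hm-h1-relation} to reduce everything to $h_1$ and verify the identity as a rational-function identity in $t$. This step is routine but messy, and I would first test it on $k=0,1$.

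For simplicity when $\alpha\neq0$, let $N$ be a nonzero submodule and take $0\neq f\in N$ of minimal $s$-degree $d$. The operators $\lambda^{-m}W_m$ act by $(t-m\alpha)\sigma_m$. A suitable finite difference in $m$ of these operators applied to $f$ lowers the $s$-degree, so minimality forces $d=0$, i.e.\ $f=f(t)$. Then $\lambda^{-m}W_mf=(t-m\alpha)f$ lies in $N$ for all $m$. Because $\alpha\neq0$, the combination $(t-m\alpha)-(t-m'\alpha)$ is a nonzero scalar, so $f\cdot\mathbb C[t]\subseteq N$. Lowering the $t$-degree with $L_m-\lambda^m(s+h_m)\sigma_m$-type combinations, namely differences of $L_m$ and $W_m$ actions over several $m$ isolating the term $-m(t-m\alpha)\partial_t$, I get $1\in N$. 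Finally, applying $L_m$ to elements of $\mathbb C[t]$ produces factors of $s$, which generate all of $\mathbb C[t,s]$.

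For $\alpha=0$, I claim $t\,\mathbb C[t,s]$ is a proper nonzero submodule. Indeed $W_m$ multiplies by $t$, and $h_m=mh(t)$. Moreover $L_m(tg)=\lambda^m t\bigl[(s+mh)g-mg-mt\partial_tg\bigr](t,s-m)$, which again lies in $t\,\mathbb C[t,s]$. Hence $\Omega(\lambda,0,\mathbf h)$ is not simple.
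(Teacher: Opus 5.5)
\textbf{Gap: the key identity for $[L_m,L_n]$ is misstated.}
Your plan is sound: verify the brackets directly, then use a top-coefficient (Vandermonde) argument in $m$ for simplicity. Your checks of $[W_m,W_n]$ and $[L_m,W_n]$ are correct, and so are your checks of the $\partial_t$- and $\partial_t^2$-terms of $[L_m,L_n]$. The error is in reducing the multiplicative part of $[L_m,L_n]$. The derivative-free terms come from
\[
(s+h_m)(s-m+h_n)-(s+h_n)(s-n+h_m)=(n-m)s+nh_n-mh_m ,
\]
so the requirement is
\[
nh_n-mh_m+n(t-n\alpha)h_m'-m(t-m\alpha)h_n'=(n-m)h_{m+n},
\]
not the identity with $nh_m-mh_n$ that you wrote. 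Your version is false. For the $k=0$ family, $h_m=m$, its left side is $0$ while the right side is $n^2-m^2$. In fact your left side vanishes identically for every $\mathbf h\in\mathcal T_\alpha$. This is exactly the step you called the main obstacle and then deferred as routine, so the module axioms are not established as written.

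\textbf{The fix.}
The correct identity follows quickly once you rewrite \eqref{eq:hm-h1-relation} as
\[
h_m(t)=m\,h_1(\alpha)+m(t-m\alpha)Q(t),\qquad Q(t)=\frac{h_1(t)-h_1(\alpha)}{t-\alpha}.
\]
Substitute this into the identity. The $Q'$-terms cancel, the constant terms give $(n^2-m^2)h_1(\alpha)$, and the coefficient of $Q$ is $(n-m)(n+m)\bigl(t-(m+n)\alpha\bigr)$. Together these are exactly $(n-m)h_{m+n}$.

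\textbf{Simplicity.}
The same form of $h_m$ makes your simplicity step precise. Take $g\in\mathbb C[t]\cap N$ with $\mathbb C[t,s]g\subseteq N$. Subtracting $(s+h_m)g\in N$ from $\lambda^{-m}L_mg$ leaves $-m(t-m\alpha)g'\in N$ for all $m$, and the $m^2$-coefficient gives $\alpha g'\in N$. You need $\mathbb C[t,s]g\subseteq N$ before isolating this coefficient, because $h_m$ is itself quadratic in $m$. With that, your argument for $\alpha\neq0$ is correct: extract $\pm\alpha a_r(t)$ as the top $m$-coefficient of $\lambda^{-m}W_mf$, then use $W_0$ and $L_0$, then differentiate. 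It is the same argument the paper uses in Theorem~\ref{thm3.5} and for $\Psi(\lambda,\alpha,\beta_1,\mathbf h)$ in Section~5. The paper itself quotes this proposition from \cite{CG17} without proof.

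The $\alpha=0$ case is fine apart from a misplaced shift. It should read $L_m(tg)=\lambda^m t\bigl[(s+mh)g(t,s-m)-m\,g(t,s-m)-mt\,\partial_tg(t,s-m)\bigr]$, with the factor $s+mh$ unshifted. The conclusion is unaffected.
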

 \begin{theorem}\label{th2.3}
 Let $M$ be a $\mathcal W$-module that is free as a
$U(\mathbb C L_0\oplus\mathbb C W_0)$-module of rank $1$. Then $M$ is isomorphic to $\Omega(\lambda,\alpha,\mathbf{h})$ given  in Proposition \ref{pro2.2}.
 \end{theorem}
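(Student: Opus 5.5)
Identify $M$ with $\mathbb C[t,s]$, where $t$ acts as $W_0$ and $s$ acts as $L_0$, since $M=U(\mathbb C L_0\oplus\mathbb C W_0)\cdot 1$ is free of rank one. Write $L_m1=f_m(t,s)$ and $W_m1=g_m(t,s)$.

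\textbf{Step 1: the form of the action.} From $[L_0,W_m]=mW_m$ and $[W_0,W_m]=0$ we get
\[
W_m\,f(t,s)=f(t,s-m)\,g_m(t,s).
\]
Similarly, $[L_0,L_m]=mL_m$ and $[W_0,L_m]=mW_m$ give
\[
L_m\,f(t,s)=f(t,s-m)\,f_m(t,s)-m\,\partial_t f(t,s-m)\,g_m(t,s).
\]
For the second formula, commute $L_m$ past $t^is^j$: the $s$-part shifts to $s-m$, and $W_0L_m=L_mW_0-mW_m$ produces the derivative term. Central elements act by scalars. Evaluating $[L_m,L_{-m}]$ and $[L_m,W_{-m}]$ on $1$ at suitable degrees will show these scalars vanish once the polynomials are determined.

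\textbf{Step 2: determine the $g_m$.} The relations $[W_m,W_n]=0$ are automatic. The relation $[L_m,W_n]=(n-m)W_{m+n}$ applied to $1$ gives
\[
g_n(t,s-m)f_m(t,s)-g_n(t,s)f_m(t,s-n)+m\,\partial_t g_n(t,s-m)\,g_m(t,s)=(n-m)\,g_{m+n}(t,s).
\]
Compare degrees in $s$. Taking $m=n$ shows $g_m$ is independent of $s$ after a degree argument: the leading $s$-coefficients force $\deg_s g=0$. Then compare degrees in $t$, using $W_0=t$, i.e.\ $g_0=t$. Together with $m+n=0$, this forces $g_m$ to be linear in $t$ with nonzero $t$-coefficient, $g_m=a_m(t-\beta_m)$. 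Plugging back in yields multiplicative recursions, namely
\[
a_{m+n}=a_ma_n \quad\text{so } a_m=\lambda^m,\qquad \beta_m=m\alpha.
\]
Here $\lambda\neq0$ by freeness, and the rank-one hypothesis excludes $g_m\equiv0$ for some $m$: if $g_m\equiv0$, the relations force every $W_n$ except $W_0$ to act trivially, contradicting the $m\ne n$ relations.

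\textbf{Step 3: determine the $f_m$.} The main obstacle is the $L$--$L$ relation. First use the $[L_m,W_n]$ identity with the now-known $g$ to get that $f_m$ is linear in $s$, of the form
\[
f_m=\lambda^m\bigl(s+h_m(t)\bigr).
\]
The coefficient of $s$ is pinned by $m=-n$ comparisons. Then $[L_m,L_n]=(n-m)L_{m+n}$ on $1$ reduces to the functional equation
\[
(t-m\alpha)\,h_n'(t)\,n\cdots-(t-n\alpha)\,h_m'(t)\,m\cdots=(n-m)\bigl(h_{m+n}-h_m-h_n\bigr)+\text{lower terms},
\]
a linear recursion for the $h_m$ with $h_0=0$. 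I would solve it by showing that everything is determined by $h_1$ (and $h_{-1}$, which the relation with $m=1,n=-1$ ties to $h_1$) via induction on $|m|$. Then I would verify that the explicit family \eqref{eq:hm-h1-relation} satisfies it, giving $\mathbf h\in\mathcal T_\alpha$. The delicate part is showing that $h_2$ imposes no new freedom; I expect to handle it with the $(m,n)=(1,1),(2,-1)$ cases. Finally, the computed action coincides with Proposition~\ref{pro2.2}, so $M\cong\Omega(\lambda,\alpha,\mathbf h)$.
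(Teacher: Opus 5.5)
The paper does not prove this statement itself; it quotes it from \cite{CG17}. Your Step~1 is exactly the case $q=1$ of Lemma~\ref{lem3.2}. The genuine gap is the first claim of Step~2. You call $[W_m,W_n]=0$ ``automatic'', but with $W_mf=f(t,s-m)g_m$ it says
\[
g_n(t,s-m)\,g_m(t,s)=g_m(t,s-n)\,g_n(t,s),
\]
which is a real constraint as long as the $g_m$ may depend on $s$. The argument you put in its place cannot work. First fix the sign: since $W_0L_m=L_mW_0+mW_m$, the derivative term is $-m\,\partial_tg_n(t,s-m)\,g_m(t,s)$, as in your own Step~1 formula. With that sign, the case $m=n$ of the $[L_m,W_n]$ identity reads
\[
g_m(t,s-m)f_m(t,s)-f_m(t,s-m)g_m(t,s)=m\,g_m(t,s)\,\partial_tg_m(t,s-m).
\]
The pair $g_m=s$, $f_m=c(t)s$ satisfies this for every $c(t)$, so it cannot force $\deg_s g_m=0$; its leading $s$-coefficients only relate $\deg_sf_m$ to $\deg_sg_m$. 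Everything afterwards presupposes $g_m=g_m(t)$: the linearity of $f_m$ in $s$, the $t$-degree count, and the recursions. So this step has to be replaced.

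The fix is the relation you discarded. It also drives the paper's analogous Lemma~\ref{lem3.3}, except that there $g_{m,0}$ is already known, whereas here you need a leading-coefficient comparison. Let $a_m=\deg_sg_m$, and let $G_m,G_m^{(1)}$ be the coefficients of $s^{a_m},s^{a_m-1}$ in $g_m$. Comparing coefficients of $s^{a_m+a_n-1}$ in the displayed $W$--$W$ relation gives $ma_nG_nG_m=na_mG_mG_n$, i.e.\ $ma_n=na_m$ whenever $g_m,g_n\neq0$.

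The nonvanishing of $g_{\pm1}$ does not come from the rank-one hypothesis. It comes from $[L_1,W_{-1}]=-2W_0$ and $[L_{-1},W_1]=2W_0$, which carry no central term. Applied to $1$, they would give $-2t=0$ if $g_{-1}=0$, respectively $2t=0$ if $g_1=0$. Then $(m,n)=(1,-1)$ gives $a_{-1}=-a_1$, so $a_{\pm1}=0$, and $a_m=ma_1=0$ for every $m$ with $g_m\neq0$.

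Smaller points:
\begin{itemize}
\item $C_1,C_2$ act by multiplication by the polynomials $C_i1$, not a priori by scalars, since nothing assumes $M$ simple. Your final comparison of $[L_m,L_{-m}]1$ and $[L_m,W_{-m}]1$ still kills them.
\item In the $t$-degree count, $(g_1g_{-1})'=2t$ alone allows $\deg g_1=2$, $\deg g_{-1}=0$. You must exclude this with a further relation such as $[L_{-2},W_1]=3W_{-1}$.
\item Your Step~3 equation has its indices crossed. With $f_m=\lambda^m(s+h_m)$ and $g_m=\lambda^m(t-m\alpha)$, for $m+n\neq0$ and for $(m,n)=(\pm1,\mp1)$ it is
\[
(n-m)h_{m+n}=nh_n-mh_m+n(t-n\alpha)h_m'-m(t-m\alpha)h_n'.
\]
\end{itemize}
With this equation your uniqueness-from-$h_1$ plan goes through:
\begin{itemize}
\item $(1,-1)$ gives $(t+\alpha)h_1+(t-\alpha)h_{-1}\in\mathbb C$, which determines $h_{-1}$ from $h_1$.
\item $(2,-1)$ gives a first-order equation for $h_2$ whose homogeneous solutions $c(t+\alpha)^{-2}$ are not polynomial, so $h_2$ is determined.
\item Proposition~\ref{pro2.2} supplies existence for every $h_1$.
\end{itemize}
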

 \section{Modules over
 the Takiff Block type Lie algebra}
 We first introduce the Takiff Block type Lie algebra.
 \subsection{Takiff Block type Lie algebra}
\begin{definition}\label{de3.1}
 For any nonzero complex number $q$, the {\bf Takiff Block type Lie algebra} $\mathcal{B}(q)$ has a basis $\big\{L_{m,i},W_{m,i},C_{1},C_{2}\ |\ (m,i)\in \mathbb{Z}\times\mathbb{N}\big\}$ subject to the following relations
 \begin{align*}
     &[L_{m,i},L_{n,j}]=(n(i+q)-m(j+q))L_{m+n,i+j}+\frac{m^{3}-m}{12}\delta_{i+j,0}\delta_{m+n,0}C_{1},\\&
		[L_{m,i},W_{n,j}]=(n(i+q)-m(j+q))W_{m+n,i+j}+\frac{m^{3}-m}{12}\delta_{i+j,0}\delta_{m+n,0}C_{2},\\&
		[W_{m,i},W_{n,j}]=[\mathcal{B}(q),C_1]=[\mathcal{B}(q),C_2]=0.
 \end{align*}
 \end{definition}
 The elements $C_1$ and $C_2$ are central in $\mathcal B(q)$. If $-q\in\mathbb N$, then $L_{0,-q}$ and $W_{0,-q}$ are also central.
 Throughout this paper, we always denote $\mathfrak{h}=\mathbb{C}L_{0,0}\oplus\mathbb{C}W_{0,0}$. Let $\mathcal F_1(\mathcal B(q))$ denote the full subcategory of $\mathcal B(q)$-Mod whose objects are free of rank $1$ as $U(\mathfrak h)$-modules.

Set $\overline{\mathcal B}(q)=\mathcal B(q)/\operatorname{span}\{C_1,C_2\}$ and use the same symbols for the images of the generators. We record the following subalgebras of $\mathcal B(q)$ or $\overline{\mathcal B}(q)$.

 \begin{itemize}
\item[\rm(1)] For $q=-1$, the \textbf{Heisenberg--Virasoro algebra} is
\begin{align*}
\mathcal H=\operatorname{span}\{-L_{m,0},\,L_{m,1}\mid m\in\mathbb Z\}\cong\operatorname{span}\{-L_{m,0},\,W_{m,1}\mid m\in\mathbb Z\}
  \subseteq\overline{\mathcal B}(-1).
\end{align*}
\item[\rm(2)] the  {\bf $W$-algebra $W(2,2)$ }$\mathcal{W}= \text{span}\{q^{-1}L_{m,0},q^{-1}W_{m,0},C_{1},C_{2}\,|\,m\in\mathbb{Z}\}$;
\item[\rm(3)] For $q=-1$, the {\bf extended $W(2,2)$ algebra }$\mathcal{D}=\operatorname{span}\{-L_{m,0},\,W_{m,0},\,W_{m,1}\,|\, m\in\mathbb Z\}\subseteq\overline{\mathcal B}(-1)$.
\end{itemize}

For a family $\mathbf h(\alpha)=\{h_m(t)\mid m\in\mathbb Z\}\in \mathcal{T}_{\alpha}$, $c\in  \mathbb{C}$, set
\begin{align*}
\Theta_{\mathbf h,c}(t)
=c\frac{h_1(-t)-h_1(\alpha)}{-t-\alpha}\in\mathbb C[t].
\end{align*}
In the following, we always denote $\beta=(\beta_{1},\beta_{2})$ and $\gamma=(\gamma_{1},\gamma_{2})$.
 For any $\lambda\in\mathbb{C}^*$, $\alpha\in\mathbb{C}$, $\beta,\gamma\in \mathbb{C}^{2}$, $\mathbf h(\alpha)\in \mathcal{T}_{\alpha}$,
define the action of $\mathcal{B}(q)$ on $\Omega\big(\lambda,\alpha,\beta,\gamma,\mathbf h(\alpha)\big)=\mathbb C[t,s]$ by
     \begin{align}
C_{1}f(t,s)&=C_{2}f(t,s)=0,\label{C3.1}\\
L_{m,i}f(t,s)
&=\delta_{i,0}\lambda^{m}(s+qh_m(q^{-1}t))
f(t,s-mq)\label{L3.2}\\
&\quad-\delta_{i,0}m\lambda^{m}q(t-mq\alpha)
\frac{\partial}{\partial t}f(t,s-mq)
+\delta_{m,0}\delta_{i,-2q}\gamma_2f(t,s)\notag\\
&\quad+\delta_{i,1}\delta_{q,-1}\lambda^m
\left(\beta_2+m\Theta_{\mathbf h,\beta_1}(t)+m\beta_1
\frac{\partial}{\partial t}\right)
f(t,s+m),
\notag\\
W_{m,i}f(t,s)
&=\delta_{i,0}\lambda^m(t-mq\alpha)f(t,s-mq)
+\delta_{m,0}\delta_{i,-2q}\gamma_1f(t,s)\label{W3.3}\\
&\quad+\delta_{i,1}\delta_{q,-1}\beta_1\lambda^m
f(t,s+m),\notag
\end{align}
where $(m,i)\in \mathbb{Z}\times \mathbb{N}$.
In particular, when $q=\frac1n$ for some $n\in\mathbb Z_{+}$, for $\mathbf{h}(0)=\{h_{m}(t)\mid m\in\mathbb{Z}\}\in\mathcal{T}_{0}$ and $a,b\in\mathbb{C}$, define the action of $\mathcal{B}(q)$ on $\Omega_n(\lambda,a,b,\mathbf h(0))=\mathbb C[t,s]$ by
\begin{align}
C_1 f(t,s)&=C_2 f(t,s)=0,\notag \\
L_{m,i}f(t,s)
&=
\lambda^m a^i t^{in}
\left((1+in)s+\frac1n h_m(nt)-mi\right)
f\left(t,s-\frac mn\right) \label{eq:Omega-n-L}\\
&\quad
+ib\lambda^m a^{i-1} t^{1+in}
f\left(t,s-\frac mn\right)
-\frac mn\lambda^m a^i t^{1+in}
\frac{\partial}{\partial t}
f\left(t,s-\frac mn\right),\notag
\\
W_{m,i}f(t,s)
&=
\lambda^m a^i t^{1+in}
f\left(t,s-\frac mn\right),
\label{eq:Omega-n-W}
\end{align}
where $(m,i)\in\mathbb Z\times\mathbb N$. 
 This module can be regarded as an additional family occurring in the case $q=\frac1n$ and $\alpha=0$.

\begin{proposition}\label{prop3.1}

For any $\lambda,q\in\mathbb{C}^*$, $\alpha,a,b\in\mathbb C$, $\beta,\gamma\in \mathbb{C}^2$ and $\mathbf{h}(\alpha)\in\mathcal{T}_{\alpha}$, the following statements hold.
\item[{\rm (1)}]
$\Omega\big(\lambda,\alpha,\beta,\gamma,\mathbf h(\alpha)\big)\in \operatorname{Ob}(\mathcal F_1(\mathcal B(q)))$ under the actions of
\eqref{C3.1}--\eqref{W3.3}.
\item[{\rm (2)}] $\Omega_n\big(\lambda,a,b,\mathbf h(0)\big)\in\operatorname{Ob}(\mathcal F_1(\mathcal B(\frac{1}{n})))$ under the actions of \eqref{eq:Omega-n-L}--\eqref{eq:Omega-n-W}, where $n\in\mathbb Z_+$.
\end{proposition}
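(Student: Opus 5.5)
Both parts are checked in the same way. First we show that the given formulas define a representation of $\mathcal B(q)$ on $\mathbb C[t,s]$; then we show that the module is free of rank one over $U(\mathfrak h)$. Freeness is the easier half. In $\Omega(\lambda,\alpha,\beta,\gamma,\mathbf h(\alpha))$ we have $W_{0,0}f=tf+\delta_{0,-2q}\gamma_1 f$ and $L_{0,0}f=(s+qh_0(q^{-1}t))f+\delta_{0,-2q}\gamma_2 f$. Since $q\neq 0$ the $\delta_{0,-2q}$ terms vanish, and $h_0=0$. So $W_{0,0}$ acts as multiplication by $t$ and $L_{0,0}$ as multiplication by $s$. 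Hence $U(\mathfrak h)\to\mathbb C[t,s]$, $u\mapsto u\cdot 1$, is an isomorphism and the module is free of rank one. The same holds for $\Omega_n$: setting $i=m=0$ in \eqref{eq:Omega-n-L}--\eqref{eq:Omega-n-W} gives multiplication by $s$ and by $t$.

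For the module axioms it suffices to check the three bracket relations on basis elements. The centre acts by $0$, and the cocycle terms only appear when $i=j=0$. Write $L_{m,i}$ and $W_{m,i}$ as operators $\lambda^m A_{m,i}\,\sigma^{m}$, where $\sigma^{m}$ is the shift $s\mapsto s-mq$ (or $s\mapsto s+m$ in the $\delta_{i,1}$ terms with $q=-1$) and $A_{m,i}$ is a first-order differential operator in $t$ with coefficients polynomial in $s,t$. Commuting $\sigma$ past $s$ produces the expected linear terms.

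For part (1), I will split according to the second indices. The $(0,0)$ case reduces to Proposition \ref{pro2.2}. Substituting $t\mapsto q^{-1}t$, $s\mapsto q^{-1}s$ and rescaling gives exactly the $\mathcal W$-module $\Omega(\lambda,\alpha,\mathbf h)$ under the identification of $\mathcal W$ with the span of the $q^{-1}L_{m,0}$, $q^{-1}W_{m,0}$. So all $i=j=0$ brackets, including the cocycle terms, follow from Proposition \ref{pro2.2}.

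Next come the pairs in which one index equals $-2q$ and the other is $0$. The operators $\gamma_1,\gamma_2$ are scalars, so both sides must vanish. The left side is a commutator with a scalar, hence $0$. The right side is $(n(-2q+q)-0)X_{n,-2q}=-nq\,X_{n,-2q}$, which is zero unless $n\neq0$. For $n\neq0$ the element $X_{n,-2q}$ acts by zero, because only $m=0$ survives in the $\delta_{i,-2q}$ terms.

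All pairs with $i+j\ge 2$, other than the central ones at index $-2q$, must act by zero; this works because each side is a product of terms carrying $\delta_{i,1}$ or $\delta_{i,-2q}$. The crucial and hardest case is $q=-1$ with indices $(1,0)$ and $(1,1)$.

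For $[L_{m,0},L_{n,1}]=(n(0-1)-m(1-1))L_{m+n,1}=-nL_{m+n,1}$ one must verify an identity. It says that $\Theta_{\mathbf h,\beta_1}$ is compatible with $h_m(-t)$ in the way \eqref{eq:hm-h1-relation} dictates. Concretely, after cancelling shifts, the $\partial_t$-terms of $L_{m,0}$, namely $m\lambda^m(t+m\alpha)\partial_t$, act on $m\Theta(t)+m\beta_1\partial_t$. Comparing coefficients leads to the polynomial identity
\[
h_{m}(-t)-h_{m+n}(-t)+h_n(-t)\ \text{versus}\ \Theta\text{-terms}.
\]
I expect to verify this by expressing everything through $h_1$ via \eqref{eq:hm-h1-relation}, using that $\frac{h_1(-t)-h_1(\alpha)}{-t-\alpha}$ is exactly the divided difference appearing there. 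This is the main obstacle, and I would first check it on the basis families $h_{m,k;\alpha}$.

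The case $(1,1)$ requires $[L_{m,1},L_{n,1}]=0$. This holds because, with $\beta_1$ fixed, the operators $\lambda^m(\beta_2+m\Theta+m\beta_1\partial_t)\tau^m$ commute, where $\tau$ is the shift $s\mapsto s+1$: their coefficients do not depend on $s$, and $[m\Theta+m\beta_1\partial_t,\,n\Theta+n\beta_1\partial_t]$ is antisymmetric and vanishes. The $W$-brackets are similar and easier.

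For part (2), I will use generating functions. Formally, $L_{m,i}$ is obtained from $L_{m,0}$ by multiplying by $(at^n)^i$ and adding correction terms. So I will verify $[L_{m,i},L_{k,j}]=(k(i+\frac1n)-m(j+\frac1n))L_{m+k,i+j}$ by direct computation. The $\partial_t$ of $t^{jn}$ produces the factor $jn\,t^{jn}$, and the shift $s\mapsto s-\frac mn$ in the factor $(1+jn)s$ produces $-\frac mn(1+jn)$. These combine with the $-mi$ and $ib$ terms so that the coefficients match. The $h_m(nt)=m\,n\,h(nt)$ terms cancel because $\mathbf h(0)\in\mathcal T_0$ is linear in $m$. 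The $W$-relations are immediate because the $W_{m,i}$ are multiplication operators composed with shifts.
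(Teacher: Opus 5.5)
\textbf{Where the proposal falls short.} Your overall strategy matches the paper's. You reduce the $i=j=0$ brackets to Proposition~\ref{pro2.2}; your explicit rescaling $t\mapsto q^{-1}t$, $s\mapsto q^{-1}s$ justifies this more precisely than the paper's ``coincide''. You note that $X_{0,-2q}$ acts by scalars and that every other operator with large second index acts by $0$. You check $[L_{m,1},L_{n,1}]=0$ directly, and you verify part (2) by direct computation. Your freeness check is correct and is left implicit in the paper: $W_{0,0}=t$ and $L_{0,0}=s$ since $h_0=0$.

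The gap is that the only substantive relation in part (1), $[L_{m,0},L_{n,1}]=-nL_{m+n,1}$ for $q=-1$, is never verified. Moreover, the identity you plan to check is not the one that arises. Neither $h_n$ nor $h_{m+n}$ appears, because $L_{n,1}$ and $L_{m+n,1}$ see $\mathbf h$ only through $\Theta=\Theta_{\mathbf h,\beta_1}$. The combination $h_m-h_{m+n}+h_n$ belongs to $[L_{m,0},L_{n,0}]$, which is already covered.

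Expand both orders, with everything multiplied by $\lambda^{m+n}$ and applied to $f(t,s+m+n)$:
\begin{itemize}
\item The $\partial_t^2$-terms and the terms $m(t+m\alpha)(\beta_2+n\Theta)\partial_t$ cancel.
\item The shift in the coefficient $s-h_m(-t)$ produces $-n(\beta_2+n\Theta)-n^2\beta_1\partial_t$.
\item The operator $n\beta_1\partial_t$ hitting $m(t+m\alpha)$ produces $-mn\beta_1\partial_t$.
\item The only other zeroth-order term is $n\bigl(m(t+m\alpha)\Theta'(t)-\beta_1h_m'(-t)\bigr)$. It comes from $m(t+m\alpha)\partial_t$ hitting $n\Theta$, and from $n\beta_1\partial_t$ hitting $-h_m(-t)$.
\end{itemize}
So what must actually be proved is
\[
m(t+m\alpha)\Theta'(t)-\beta_1h_m'(-t)=-m\Theta(t).
\]

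\textbf{How to close part (1).} This identity follows from \eqref{eq:hm-h1-relation} evaluated at $-t$. Put $Q(t)=\frac{h_1(-t)-h_1(\alpha)}{-t-\alpha}$, so that $\Theta=\beta_1Q$. Then $h_m(-t)-mh_1(\alpha)=m(-t-\alpha)Q-m(m-1)\alpha Q=-m(t+m\alpha)Q$, which says $-\beta_1\bigl(h_m(-t)-mh_1(\alpha)\bigr)=m(t+m\alpha)\Theta(t)$. Differentiating in $t$ gives $\beta_1h_m'(-t)=m\bigl((t+m\alpha)\Theta'+\Theta\bigr)$. This is the paper's \eqref{eq:theta-hm-derivative}, and it is exactly what is needed.

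\textbf{Part (2).} This part is likewise only announced, and the hint is inaccurate. In $\mathcal T_0$ one has $h_m(nt)=m\,h(nt)$, not $mn\,h(nt)$. Also, the $h$-terms do not simply cancel; linearity in $m$ is used twice.
\begin{itemize}
\item First, it makes $\frac1nh_m(nt)\bigl(ki-\frac{m}{n}(1+jn)\bigr)+\frac1nh_k(nt)\bigl(\frac{k}{n}(1+in)-mj\bigr)$ equal to $\bigl(k(i+\frac1n)-m(j+\frac1n)\bigr)\frac1nh_{m+k}(nt)$.
\item Second, through $mh_k'=kh_m'$, it cancels the cross terms $-\frac{m}{n}t\,h_k'(nt)$ and $-\frac{k}{n}t\,h_m'(nt)$. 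These arise when each derivative part hits the other operator's $h$-term.
\end{itemize}
Only after these two steps do the $s$-, constant-, $b$- and $\partial_t$-coefficients match. The paper carries this out using $P_{m,i}$ and the shift rule $P_{k,j}(t,s-\frac{m}{n})=P_{k,j}(t,s)-m(j+\frac1n)$.
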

\begin{proof}
Since the actions of
$C_1$
  and
$C_2$ are trivial, we will omit them in the following proof.

{\rm (1)}
Since $\mathcal{B}(q)$ contains   $\mathcal{W}$ as a subalgebra, and the actions of $L_{m,0}$, $W_{m,0}$, $C_1$ and $C_2$ on $\Omega(\lambda,\alpha,\beta,\gamma,\mathbf h(\alpha))$ coincide with those in \cite{CG17}, the defining relations involving only the  $i=0$ part have already been verified.
The  nontrivial  action of $\mathcal{B}(q)$ on $\Omega(\lambda,\alpha,\beta,\gamma,\mathbf h(\alpha))$ in \eqref{C3.1}--\eqref{W3.3} can be written as
 \begin{align*}
      L_{m,i} f(t,s)=&
      \delta_{i,0}L_{m,0}f(t,s)
+\delta_{m,0}\delta_{i,-2q}\gamma_{2}f(t,s) \\
&+\delta_{i,1}\delta_{q,-1}\lambda^m
\left(\beta_2+m\Theta_{\mathbf h,\beta_1}(t)
+m\beta_1\frac{\partial}{\partial t}\right)f(t,s+m),
     \notag \\
      W_{m,i} f(t,s)=&
      \delta_{i,0}W_{m,0}f(t,s)
+\delta_{m,0}\delta_{i,-2q}\gamma_{1}f(t,s) +\delta_{i,1}\delta_{q,-1}\beta_{1}\lambda^{m}f(t,s+m).
     \end{align*}

If $q\neq -1$, then $L_{m,1}f(t,s)=W_{m,1}f(t,s)=0$ for all $m\in\mathbb Z$. It remains to check the defining relations involving $L_{0,-2q}$ and $W_{0,-2q}$. Since $L_{0,-2q}$ and $W_{0,-2q}$ act as scalars and all the remaining operators vanish, these relations follow directly from the definitions. Hence the formulas in \eqref{C3.1}-\eqref{W3.3} define a $\mathcal{B}(q)$-module in this case.

Now assume $q=-1$.
	According to \eqref{eq:hm-h1-relation} and $-\beta_1(h_m(-t)-mh_1(\alpha))=m(t+m\alpha)\Theta_{\mathbf h,\beta_1}(t)$, we have
	\begin{equation}\label{eq:theta-hm-derivative}
		-\beta_1\frac{d}{dt}h_m(-t)
		=m\bigl((t+m\alpha)\Theta_{\mathbf h,\beta_1}'(t)
		+\Theta_{\mathbf h,\beta_1}(t)\bigr).
	\end{equation}
		We now verify the relation
		$
		[L_{m,0},L_{n,1}]=-nL_{m+n,1}.
	$
		For any $f(t,s)\in \mathbb C[t,s]$, by \eqref{L3.2}, we have
		\begin{align*}
		L_{m,0}(L_{n,1}f(t,s))
		&=\lambda^{m+n}(s-h_m(-t))
		\bigl(\beta_2+n\Theta_{\mathbf h,\beta_1}(t)\bigr)f(t,s+m+n)\\
		&\quad+n\beta_1\lambda^{m+n}(s-h_m(-t))
		\frac{\partial f}{\partial t}(t,s+m+n)\\
		&\quad+mn\lambda^{m+n}(t+m\alpha)
		\Theta_{\mathbf h,\beta_1}'(t)f(t,s+m+n)\\
		&\quad+m\lambda^{m+n}(t+m\alpha)
		\bigl(\beta_2+n\Theta_{\mathbf h,\beta_1}(t)\bigr)
		\frac{\partial f}{\partial t}(t,s+m+n)\\
		&\quad+mn\beta_1\lambda^{m+n}(t+m\alpha)
		\frac{\partial^2f}{\partial t^2}(t,s+m+n).
		\end{align*}
		On the other hand,
		\begin{align*}
		L_{n,1}(L_{m,0}f(t,s))
		&=\lambda^{m+n}\bigl(\beta_2+n\Theta_{\mathbf h,\beta_1}(t)\bigr)
		(s+n-h_m(-t))f(t,s+m+n)\\
		&\quad+m\lambda^{m+n}\bigl(\beta_2+n\Theta_{\mathbf h,\beta_1}(t)\bigr)
		(t+m\alpha)\frac{\partial f}{\partial t}(t,s+m+n)\\
		&\quad+n\beta_1\lambda^{m+n}h_m'(-t)f(t,s+m+n)\\
		&\quad+n\beta_1\lambda^{m+n}(s+n-h_m(-t))
		\frac{\partial f}{\partial t}(t,s+m+n)\\
		&\quad+mn\beta_1\lambda^{m+n}\frac{\partial f}{\partial t}(t,s+m+n)\\
		&\quad+mn\beta_1\lambda^{m+n}(t+m\alpha)
		\frac{\partial^2f}{\partial t^2}(t,s+m+n).
		\end{align*}
		Therefore, by \eqref{eq:theta-hm-derivative}
		\begin{align*}
		[L_{m,0},L_{n,1}]f(t,s)
		&=-n\lambda^{m+n}\bigl(\beta_2+n\Theta_{\mathbf h,\beta_1}(t)\bigr)f(t,s+m+n)\\
		&\quad+n\lambda^{m+n}\bigl(m(t+m\alpha)\Theta_{\mathbf h,\beta_1}'(t)
		-\beta_1h_m'(-t)\bigr)f(t,s+m+n)\\
		&\quad-n(m+n)\beta_1\lambda^{m+n}\frac{\partial f}{\partial t}(t,s+m+n)\\
		&=-n\lambda^{m+n}\bigl(\beta_2+(m+n)\Theta_{\mathbf h,\beta_1}(t)\bigr)f(t,s+m+n)\\
		&\quad-n(m+n)\beta_1\lambda^{m+n}\frac{\partial f}{\partial t}(t,s+m+n)\\
		&=-nL_{m+n,1}f(t,s).
		\end{align*}
		Similarly, one can verify that
	$
		[L_{m,0},W_{n,1}]=-nW_{m+n,1}$,
		$[L_{m,1},W_{n,0}]=mW_{m+n,1}.
$  
It remains to verify $[L_{m,1},L_{n,1}]=0$. Since
\begin{align*}
    L_{m,1}L_{n,1}f(t,s)
    &=\lambda^{m+n}
    \left(\beta_2+m\Theta_{\mathbf h,\beta_1}(t)
    +m\beta_1\frac{\partial}{\partial t}\right)\\
	&\quad\cdot
	\left(\beta_2+n\Theta_{\mathbf h,\beta_1}(t)
	+n\beta_1\frac{\partial}{\partial t}\right)f(t,s+m+n)\\
	&=L_{n,1}L_{m,1}f(t,s).
\end{align*}
We obtain $[L_{m,1},L_{n,1}]=0$.

Finally, when $q=-1$, we have
$
	L_{0,2}=\gamma_2\,{\rm Id}$, $W_{0,2}=\gamma_1\,{\rm Id},
$
	and all the remaining $L_{m,i},W_{m,i}$ with $i\ge 3$ vanish. Hence all the other defining relations are immediate. Therefore the operators in \eqref{C3.1}-\eqref{W3.3} satisfy all defining relations of $\mathcal{B}(q)$, and thus $\Omega\big(\lambda,\alpha,\beta,\gamma,\mathbf h(\alpha)\big)$ is a $\mathcal{B}(q)$-module.

{\rm (2)}
		For $m,k\in\mathbb Z$, $i,j\in\mathbb N$, set
		\begin{align*}
		P_{m,i}(t,s)=(1+in)s+\frac1n h_m(nt)-mi,
		\qquad f_{m+k}=f\left(t,s-\frac{m+k}{n}\right).
		\end{align*}
		Since $\mathbf h(0)\in\mathcal T_0$, we have
		\begin{gather*}
		h_{m+k}(nt)=h_m(nt)+h_k(nt),\qquad mh_k'(nt)=kh_m'(nt),\\
		P_{k,j}\left(t,s-\frac mn\right)=P_{k,j}(t,s)-m\left(j+\frac1n\right).
		\end{gather*}
		For $f(t,s)\in\mathbb C[t,s]$,  we compute that 
		{\small
		\begin{align*}
		L_{m,i}(L_{k,j}f(t,s))
		&=\lambda^{m+k}a^{i+j}t^{(i+j)n}\\
		&\quad\cdot\left[P_{m,i}(t,s)P_{k,j}\left(t,s-\frac mn\right)
		-mjP_{k,j}\left(t,s-\frac mn\right)-\frac mn t h_k'(nt)\right]f_{m+k}\\
		&\quad+b\lambda^{m+k}a^{i+j-1}t^{1+(i+j)n}\\
		&\qquad\cdot\left[jP_{m,i}(t,s)+iP_{k,j}\left(t,s-\frac mn\right)-\frac{mj}{n}(1+jn)\right]f_{m+k}\\
		&\quad+ijb^2\lambda^{m+k}a^{i+j-2}t^{2+(i+j)n}f_{m+k}\\
		&\quad-\lambda^{m+k}a^{i+j}t^{1+(i+j)n}\\
		&\qquad\cdot\left[\frac knP_{m,i}(t,s)+\frac mnP_{k,j}\left(t,s-\frac mn\right)
		-\frac{mk}{n^2}(1+jn)\right]\frac{\partial f_{m+k}}{\partial t}\\
		&\quad-\frac{(ik+mj)b}{n}\lambda^{m+k}a^{i+j-1}t^{2+(i+j)n}
		\frac{\partial f_{m+k}}{\partial t}\\
		&\quad+\frac{mk}{n^2}\lambda^{m+k}a^{i+j}t^{2+(i+j)n}
		\frac{\partial^2f_{m+k}}{\partial t^2}.
		\end{align*}
		}
		On the other hand, we obtain
		{\small
		\begin{align*}
		L_{k,j}(L_{m,i}f(t,s))
		&=\lambda^{m+k}a^{i+j}t^{(i+j)n}\\
		&\quad\cdot\left[P_{k,j}(t,s)P_{m,i}\left(t,s-\frac kn\right)
		-kiP_{m,i}\left(t,s-\frac kn\right)-\frac kn t h_m'(nt)\right]f_{m+k}\\
		&\quad+b\lambda^{m+k}a^{i+j-1}t^{1+(i+j)n}\\
		&\qquad\cdot\left[iP_{k,j}(t,s)+jP_{m,i}\left(t,s-\frac kn\right)-\frac{ki}{n}(1+in)\right]f_{m+k}\\
		&\quad+ijb^2\lambda^{m+k}a^{i+j-2}t^{2+(i+j)n}f_{m+k}\\
		&\quad-\lambda^{m+k}a^{i+j}t^{1+(i+j)n}\\
		&\qquad\cdot\left[\frac mnP_{k,j}(t,s)+\frac knP_{m,i}\left(t,s-\frac kn\right)
		-\frac{mk}{n^2}(1+in)\right]\frac{\partial f_{m+k}}{\partial t}\\
		&\quad-\frac{(jm+ki)b}{n}\lambda^{m+k}a^{i+j-1}t^{2+(i+j)n}
		\frac{\partial f_{m+k}}{\partial t}\\
		&\quad+\frac{mk}{n^2}\lambda^{m+k}a^{i+j}t^{2+(i+j)n}
		\frac{\partial^2f_{m+k}}{\partial t^2}.
		\end{align*}
		}
		Subtracting the two expressions gives
		\begin{align*}
		[L_{m,i},L_{k,j}]f(t,s)
		&=\left(k\left(i+\frac1n\right)-m\left(j+\frac1n\right)\right)\\
		&\Bigg(\lambda^{m+k}a^{i+j}t^{(i+j)n}P_{m+k,i+j}(t,s)f_{m+k}\\
		& +(i+j)b\lambda^{m+k}a^{i+j-1}t^{1+(i+j)n}f_{m+k}\\
		&-\frac{m+k}{n}\lambda^{m+k}a^{i+j}t^{1+(i+j)n}
		\frac{\partial f_{m+k}}{\partial t}\Bigg)\\
		&=\left(k\left(i+\frac1n\right)-m\left(j+\frac1n\right)\right)
		L_{m+k,i+j}f(t,s).
		\end{align*}
			Similarly, we can get
			\begin{align*}
			[L_{m,i},W_{k,j}]f(t,s)
			=\left(k\left(i+\frac1n\right)-m\left(j+\frac1n\right)\right)
			W_{m+k,i+j}f(t,s)
			\end{align*}
		and
		\begin{align*}
		W_{m,i}(W_{k,j}f(t,s))
		=\lambda^{m+k}a^{i+j}t^{2+(i+j)n}f_{m+k}
		=W_{k,j}(W_{m,i}f(t,s)).
		\end{align*}
		Therefore all defining relations of $\mathcal{B}(\frac{1}{n})$ are satisfied, and
		$\Omega_n(\lambda,a,b,\mathbf h(0))$ is a $\mathcal{B}(\frac{1}{n})$-module.
\end{proof}
 \begin{theorem}\label{thm3.5}
    For any $\lambda\in\mathbb{C}^*$, $\alpha\in\mathbb{C}$, $\beta=(\beta_1,\beta_2)\in\mathbb{C}^{2}$, $\gamma=(\gamma_1,\gamma_2)\in \mathbb{C}^{2}$ and $\mathbf{h}(\alpha)\in \mathcal T_\alpha$, $\Omega(\lambda,\alpha,\beta,\gamma,\mathbf h(\alpha))$ is simple if and only if $\alpha\neq 0$ or $q=-1$ and $\beta_{1}\neq 0$.
 \end{theorem}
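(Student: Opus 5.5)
The plan is to reduce the statement to Proposition~\ref{pro2.2} via the $W(2,2)$-subalgebra $\mathcal W=\operatorname{span}\{q^{-1}L_{m,0},q^{-1}W_{m,0},C_1,C_2\}$, and then handle the one remaining case ($\alpha=0$, $q=-1$, $\beta_1\neq0$) by hand.

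\textbf{Step 1 (restriction to $\mathcal W$).} By \eqref{L3.2}--\eqref{W3.3}, the operators $q^{-1}L_{m,0}$ and $q^{-1}W_{m,0}$ act on $\mathbb C[t,s]$ by
\[
q^{-1}W_{m,0}f=\lambda^m q^{-1}(t-mq\alpha)f(t,s-mq),
\]
with an analogous formula for $q^{-1}L_{m,0}$. After the change of variables $t'=q^{-1}t$, $s'=q^{-1}s$, these become exactly the $\mathcal W$-action of $\Omega(\lambda,\alpha,\mathbf h(\alpha))$ in Proposition~\ref{pro2.2}. The relevant identities are $t-mq\alpha=q(t'-m\alpha)$, $qh_m(q^{-1}t)=qh_m(t')$, and $q\,\partial_t=\partial_{t'}$. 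Hence, if $\alpha\neq0$, the module is already simple over $\mathcal W$, and therefore simple over $\mathcal B(q)$.

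\textbf{Step 2 (non-simplicity).} Suppose $\alpha=0$, and that either $q\neq-1$, or $q=-1$ and $\beta_1=0$. I claim that $t\,\mathbb C[t,s]$ is a proper nonzero submodule.
\begin{itemize}
\item $W_{m,0}$ multiplies by $\lambda^m t$ (after the shift in $s$), so it preserves $t\,\mathbb C[t,s]$.
\item $L_{m,0}$ acts by $(s+qh_m(q^{-1}t))$ composed with a shift, minus $mqt\,\partial_t$. Both pieces preserve $t\,\mathbb C[t,s]$.
\item $L_{0,-2q}$ and $W_{0,-2q}$ act by scalars.
\item When $q=-1$ and $\beta_1=0$, we have $\Theta_{\mathbf h,0}=0$. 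So $W_{m,1}=0$ and $L_{m,1}f=\beta_2\lambda^m f(t,s+m)$, which preserves $t\,\mathbb C[t,s]$.
\end{itemize}
Thus the module is not simple in these cases.

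\textbf{Step 3 (simplicity for $\alpha=0$, $q=-1$, $\beta_1\neq0$).} This is the main step. Let $N\neq0$ be a submodule. Here $W_{0,0}$ acts as multiplication by $t$. Also
\[
W_{-m,1}L_{m,1}f=\beta_1\bigl(\beta_2+m\Theta+m\beta_1\partial_t\bigr)f, \qquad \Theta=\Theta_{\mathbf h,\beta_1}.
\]
Taking the difference between $m=1$ and $m=0$ shows that $N$ is stable under $D=\Theta(t)+\beta_1\partial_t$. Since $[D,t]=\beta_1\neq0$, the operators $t$ and $D$ generate a copy of the Weyl algebra. Conjugating by $e^{g(t)}$ with $g'=\Theta/\beta_1$ identifies this copy with $\mathbb C[t,\partial_t]$ acting on the first tensor factor of $\mathbb C[t]\otimes\mathbb C[s]$.

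Since $\mathbb C[t]$ is a simple Weyl-module with endomorphism ring $\mathbb C$, a density argument gives $N=\mathbb C[t]\otimes V$ for some nonzero subspace $V\subseteq\mathbb C[s]$. This is the step I expect to require the most care: one must justify that every Weyl-submodule of $\mathbb C[t]\otimes\mathbb C[s]$ has this form. The argument is the standard Jacobson density argument, applied after the twist by $e^{g}$.

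\textbf{Step 4 (conclusion).} Now $V$ is stable under multiplication by $s$, because $L_{0,0}$ acts by $s$ (as $h_0=0$). It is also stable under the shifts $v(s)\mapsto v(s+m)$, because $W_{m,1}v=\beta_1\lambda^m v(s+m)$. Taking finite differences $v(s+1)-v(s)$ strictly lowers the $s$-degree. Repeating this, $V$ contains a nonzero constant, so $1\in V$. Then $V=\mathbb C[s]$, and hence $N=\mathbb C[t,s]$, which proves simplicity.
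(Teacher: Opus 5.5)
Your proof is correct. Steps 1 and 2 match the paper: restriction to $\mathcal W$ when $\alpha\neq0$, and the submodule $t\,\mathbb C[t,s]$ when $\alpha=0$. You are actually more careful than the paper here. You verify the rescaling $t'=q^{-1}t$, $s'=q^{-1}s$, which the paper takes for granted when citing \cite{CG17}. You also treat the case $q\neq-1$, $\alpha=0$ explicitly, whereas the paper only writes ``$\alpha=\beta_1=0$''.

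The real difference is Step 3, where the two arguments run in opposite orders.
\begin{itemize}
\item The paper takes $f\in N$ of minimal $s$-degree. It uses $W_{m,1}f=\beta_1\lambda^m f(t,s+m)$ and finite differences to force $f=f(t)$. Then $W_{0,0}$ and $L_{0,0}$ give $f(t)\mathbb C[t,s]\subseteq N$. Finally it reads off $m\lambda^m\beta_1 f'(t)\in N$ from $L_{m,1}f$ and lowers the $t$-degree. It only ever needs one well-chosen element and never describes $N$.
\item You first determine the $t$-structure of every submodule, $N=\mathbb C[t]\otimes V$, via a Weyl-algebra density argument. Only then do you run the shift and difference argument on $V$.
\end{itemize}

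Your route proves a somewhat stronger structural statement, but the machinery in Step 3 is more than you need. Since $W_{0,0}$ already makes $N$ stable under multiplication by every polynomial in $t$, it is stable under multiplication by $\Theta(t)$. So stability under $D=\Theta+\beta_1\partial_t$ gives stability under $\partial_t$ directly.

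That removes both the conjugation by $e^{g}$ and Jacobson density. The conjugation is in any case only formal as written, since $e^{g}\notin\mathbb C[t]$; the rigorous version is twisting by the automorphism $\partial\mapsto\partial+g'$ of the Weyl algebra. With $\partial_t$ available, the decomposition $N=\mathbb C[t]\otimes V$ follows by induction on $t$-degree. For an element of top $t$-degree $d$, applying $\partial_t^{d}$ isolates its leading coefficient $1\otimes v_d$. Multiplying by powers of $t$ then lets you subtract off $t^d\otimes v_d$.
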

 \begin{proof}
Since $\mathcal{W}$ is a subalgebra of $\mathcal{B}(q)$, by Proposition 3.1 in \cite{CG17}, the module is simple whenever $\alpha \neq 0$.

Now assume that $q=-1$, $\alpha=0$ and $\beta_{1}\neq 0$. Suppose that $ N $ is a nonzero submodule of $\Omega(\lambda,\alpha,\beta,\gamma,\mathbf h(\alpha))$. Let $f(t,s)$ be a nonzero polynomial in $N$ of minimal degree in $s$. By \eqref{L3.2}-\eqref{W3.3}, we have
$f(t,s+m)\in N$ for all $m\in\mathbb{Z}$. This implies that $\deg_{s}(f(t,s))=0$, since otherwise $f(t,s)-f(t,s+m)\in N$ for $m\neq 0$ would have smaller degree in $s$ than $f(t,s)$, contradicting the minimality. Hence $f(t,s)=f(t)\in \mathbb{C}[t]$. From the action of $W_{0,0}$ and $L_{0,0}$, it follows that $f(t)\mathbb{C}[t,s]\subseteq N$. For $m\neq0$,
\begin{align*}
L_{m,1}f(t)-\lambda^m\bigl(\beta_2+m\Theta_{\mathbf h,\beta_1}(t)\bigr)f(t)
=m\lambda^m\beta_1f'(t)\in N.
\end{align*}
Then $f'(t)\in N$. This immediately gives that $N=\mathbb{C}[t,s]$, which means that $\Omega(\lambda,\alpha,\beta,\gamma,\mathbf h(\alpha))$ is simple.

If $\alpha=\beta_{1}=0$, then $t^{i}\mathbb{C}[t,s]$ is a submodule of $\Omega(\lambda,\alpha,\beta,\gamma,\mathbf h(\alpha))$ for every $i\in\mathbb{N}$.
 \end{proof}
We adopt the following shorthand notation. If
$-2q\notin\mathbb N$, then all terms involving $\beta$ and $\gamma$
vanish, and we write
\begin{align*}
\Omega(\lambda,\alpha,\mathbf h)
:=\Omega(\lambda,\alpha,0,0,\mathbf h).
\end{align*}
If $-2q\in\mathbb N$ and $q\neq -1$, then the only additional
parameters are $\gamma=(\gamma_1,\gamma_2)$, and we write
\begin{align*}
\Omega(\lambda,\alpha,\gamma,\mathbf h)
:=\Omega(\lambda,\alpha,0,\gamma,\mathbf h).
\end{align*}
 \begin{theorem}
    \label{prop3.2}
     $\Omega(\lambda,\alpha,\mathbf{h})\cong \Omega(\lambda_{1},\alpha_{1},\mathbf{h}_{1})$ if and only if $\lambda=\lambda_{1},\alpha=\alpha_{1}$ and $\mathbf{h}=\mathbf{h}_{1}$. $\Omega(\lambda,\alpha,\gamma,\mathbf{h})\cong \Omega(\lambda_{1},\alpha_{1},\gamma^{'},\mathbf{h}_{1})$ if and only if $\lambda=\lambda_{1},\alpha=\alpha_{1},\gamma=\gamma^{'}$ and $\mathbf{h}=\mathbf{h}_{1}$. $\Omega(\lambda,\alpha,\beta,\gamma,\mathbf h(\alpha))\cong \Omega(\lambda_{1},\alpha_1,\beta^{'},\gamma^{'},\mathbf h_{1}(\alpha_1))$ if and only if $\lambda=\lambda_{1},\alpha=\alpha_{1},\gamma=\gamma^{'},\beta=\beta^{'}$ and $\mathbf{h}=\mathbf{h}_{1}$.
 \end{theorem}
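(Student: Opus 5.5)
The "if" direction in each of the three statements is trivial. For "only if", let $\varphi$ be an isomorphism from the first module to the second. The plan is to show that $\varphi$ is multiplication by a nonzero scalar, and then compare the actions generator by generator.

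\textbf{Step 1: $\varphi$ is a scalar.} Both modules are $\mathbb C[t,s]$. On $\Omega(\lambda,\alpha,\dots)$ the elements $L_{0,0}$ and $W_{0,0}$ act by
\[
L_{0,0}f=(s+qh_0(q^{-1}t))f,\qquad W_{0,0}f=tf,
\]
since $h_0=0$. The same formulas hold on the target module. Hence $\varphi$ commutes with multiplication by $s$ and by $t$, so $\varphi$ is $\mathbb C[t,s]$-linear. It follows that $\varphi(f)=f\,\varphi(1)$.

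Bijectivity forces $\varphi(1)$ to be a unit of $\mathbb C[t,s]$, since $1$ must lie in the image $\varphi(1)\mathbb C[t,s]$. Thus $\varphi(1)=c\in\mathbb C^*$. We may therefore take $\varphi$ to be the identity map, and the two sets of operators must coincide.

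\textbf{Step 2: compare the $W_{m,0}$ action on $1$.} This gives
\[
\lambda^m(t-mq\alpha)=\lambda_1^m(t-mq\alpha_1)\qquad\text{for all } m.
\]
Taking $m=1$ and comparing the coefficient of $t$ gives $\lambda=\lambda_1$. Comparing the constant terms, using $q\neq0$, gives $\alpha=\alpha_1$.

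\textbf{Step 3: compare the $L_{m,0}$ action on $1$.} This gives $s+qh_m(q^{-1}t)=s+qh^{(1)}_m(q^{-1}t)$, where $\mathbf h_1=\{h^{(1)}_m\}$. Hence $h_m=h^{(1)}_m$ for all $m$, that is, $\mathbf h=\mathbf h_1$. This settles the first statement.

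\textbf{Step 4: $\gamma$, when $-2q\in\mathbb N$.} Apply $W_{0,-2q}$ and $L_{0,-2q}$ to $1$. When $-2q\ge 1$, the $\delta_{i,0}$ terms do not contribute, and the $\beta$-terms appear only for $i=1$. So for $q\neq -\tfrac12$ we get directly $\gamma_1=\gamma'_1$ and $\gamma_2=\gamma'_2$.

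The main subtlety is the overlap case $q=-\tfrac12$, where $-2q=1$. Here $\delta_{q,-1}=0$, so the $\beta$-terms are absent anyway. The $i=1$ action is then exactly $\gamma_1$, respectively $\gamma_2$, times the identity at $m=0$, and the comparison still gives $\gamma=\gamma'$. The case $q=0$ is excluded because $q\in\mathbb C^*$. This settles the second statement.

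\textbf{Step 5: $\beta$, when $q=-1$.} Now $-2q=2$, and $\gamma$ is read off from the $i=2$ operators as in Step 4. For $\beta$, apply $W_{1,1}$ to $1$: the result is $\beta_1\lambda$ on one side and $\beta'_1\lambda$ on the other. Since $\lambda\neq0$, this gives $\beta_1=\beta'_1$.

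Next apply $L_{0,1}$ to $1$. The $m\Theta$ and $m\beta_1\partial_t$ terms vanish at $m=0$, so we get $\beta_2=\beta'_2$.

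Finally, the result holds for every parameter choice, including $\beta_1=0$. In that case the $L_{m,1}$ comparison is not even needed, because $\mathbf h$ was already determined from $L_{m,0}$ in Step 3.

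The only real point in the whole argument is Step 1, that an isomorphism must be a scalar. Everything after it is coefficient matching.
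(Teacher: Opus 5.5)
Your proof is correct and is essentially the paper's argument. The paper gets $\lambda=\lambda_1$, $\alpha=\alpha_1$, $\mathbf h=\mathbf h_1$ by importing the Chen--Guo argument for $W(2,2)$, which is precisely your Step~1: an isomorphism commuting with $L_{0,0}$ and $W_{0,0}$ (multiplication by $s$ and $t$) is a nonzero scalar. It then reads off $\gamma$ and $\beta$ from the scalar operators $W_{0,-2q}$, $L_{0,-2q}$, $W_{0,1}$, $L_{0,1}$, just as you do, with your version simply spelling out the scalar step rather than citing it.
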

\begin{proof}
    According to \cite{CG17}, we have $\Omega(\lambda,\alpha,\mathbf{h})\cong \Omega(\lambda_{1},\alpha_{1},\mathbf{h}_{1})$ if and only if $\lambda=\lambda_{1},\alpha=\alpha_{1}$ and $\mathbf{h}=\mathbf{h}_{1}$. Following a similar argument as in the proof of Proposition 3.2 in \cite{CG17}, we deduce that if $\Omega(\lambda,\alpha,\gamma,\mathbf{h})\cong \Omega(\lambda_{1},\alpha_{1},\gamma^{'},\mathbf{h}_{1})$ or
$\Omega(\lambda,\alpha,\beta,\gamma,\mathbf h(\alpha))\cong \Omega(\lambda_{1},\alpha_1,\beta{'},\gamma^{'},\mathbf h_{1}(\alpha_1))$, then it must hold that $\lambda=\lambda_{1},\alpha=\alpha_{1}$ and
    $\mathbf{h}=\mathbf{h}_{1}$.

    Let $\phi$ be the isomorphism from $\Omega(\lambda,\alpha,\gamma,\mathbf{h})$ to $ \Omega(\lambda_{1},\alpha_{1},\gamma^{'},\mathbf{h}_{1})$.

    Then, from
    \begin{align*}
    \phi(W_{0,-2q}f(t,s))=\phi(\gamma_{1}f(t,s))=\gamma_{1}\phi(f(t,s))=W_{0,-2q}\phi(f(t,s))=\gamma_{1}^{'}\phi(f(t,s)),
    \end{align*}
    we obtain $\gamma_{1}=\gamma_{1}^{'}$. Similarly, applying $\phi(L_{0,-2q}f(t,s))=L_{0,-2q}\phi(f(t,s))$ yields $\gamma_{2}=\gamma_{2}^{'}$.

    Next, let $\psi$ be the isomorphism from $\Omega(\lambda,\alpha,\beta,\gamma,\mathbf h(\alpha))$ to $\Omega(\lambda_{1},\alpha_1,\beta^{'},\gamma^{'},\mathbf h_{1}(\alpha_1))$. From $\psi(W_{0,1}f(t,s))=W_{0,1}\psi(f(t,s))$ and $\psi(L_{0,1}f(t,s))=L_{0,1}\psi(f(t,s))$, we conclude that $\beta_{1}=\beta_{1}^{'}$ and $\beta_{2}=\beta_{2}^{'}$. Since $\mathbf h=\mathbf h_1$ and $\beta_1=\beta_1'$, the corresponding polynomials $\Theta_{\mathbf h,\beta_1}(t)$ are also equal.
\end{proof}

\subsection{$U(\mathfrak{h})$-free modules of rank one}
We denote $\mathfrak{h}=\mathbb{C}L_{0,0}\oplus\mathbb{C}W_{0,0}$. In this subsection, we will give a complete classification of $U(\mathfrak{h})$-free modules of rank $1$ over $\mathcal{B}(q)$.

Let $M\in\operatorname{Ob}(\mathcal F_1(\mathcal B(q)))$.
It follows from Definition \ref{de3.1} that
$
[L_{0,0},W_{0,0}]=0,
$
and we have $U(\mathfrak{h})=\mathbb{C}[L_{0,0},W_{0,0}]$. Choose a generator $1$ of $M$. Then
\begin{align*}
M=U(\mathfrak h) 1=\mathbb C[L_{0,0},W_{0,0}] 1.
\end{align*}
Write
$
s=L_{0,0} 1,\  t=W_{0,0} 1.
$
Then $M$ is naturally identified with $\mathbb C[t,s]$.
It is clear that $M$ can be seen as a $\mathcal{W}$-module.
    For any $m\in\mathbb{Z}$, Theorem~\ref{th2.3} gives
    \begin{equation}\label{action}
    \begin{aligned}
    C_{1} f(t,s)&=C_{2} f(t,s)=0,
    W_{m,0} f(t,s)
     =\lambda^{m}(t-mq\alpha)f(t,s-mq),\\
    L_{m,0} f(t,s)
    &=\lambda^{m}\bigl(s+q h_{m}(q^{-1}t)\bigr)f(t,s-mq)  -m\lambda^{m}q(t-m q\alpha)\frac{\partial}{\partial t}f(t,s-mq).
    \end{aligned}
    \end{equation}
    In the following proof, we omit the actions of $C_1$ and $C_2$.
 \begin{lemma}\label{lem3.2}
 For $(m,i)\in \mathbb{Z}\times \mathbb{N}$ and $f(t,s)\in\mathbb{C}[t,s]$, we have
 \begin{align}
        W_{m,i} f(t,s)=&f(t,s-mq)W_{m,i}1,\label{3.2}\\
        L_{m,i} f(t,s)=&f(t,s-mq)L_{m,i} 1 -mq\frac{\partial}{\partial t}f(t,s-mq)W_{m,i}1.\label{3.3}
 \end{align}
    In particular, if $[L_{m_{0},i_{0}},W_{0,0}] 1=0$ for some  $(m_{0},i_{0})\in \mathbb{Z}\times \mathbb{N}$, then $L_{m_{0},i_{0}} f(t,s)=f(t,s-m_{0}q)L_{m_{0},i_{0}}1$.
 \end{lemma}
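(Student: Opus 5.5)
We compute the commutators of $W_{m,i}$ and $L_{m,i}$ with the Cartan generators $L_{0,0}$ and $W_{0,0}$, which act on $M=\mathbb C[t,s]$ by multiplication by $s$ and $t$. Definition \ref{de3.1} with $(m,i)$ and $(0,0)$ gives
\begin{align*}
[L_{0,0},W_{m,i}]&=mq\,W_{m,i}, & [W_{0,0},W_{m,i}]&=0,\\
[L_{0,0},L_{m,i}]&=mq\,L_{m,i}, & [W_{0,0},L_{m,i}]&=-[L_{m,i},W_{0,0}]=mq\,W_{m,i}.
\end{align*}
The central terms vanish: $\delta_{m,0}(m^3-m)=0$, and when $i=0$, $m\neq0$ the factor $\delta_{m+n,0}$ with $n=0$ is zero.

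For \eqref{3.2}, the relations give $W_{m,i}\,t=t\,W_{m,i}$ and $W_{m,i}\,s=(s-mq)W_{m,i}$ as operators on $M$. Induction on monomials then yields $W_{m,i}\,g(t,s)=g(t,s-mq)\,W_{m,i}$ for every polynomial $g$, where $g(t,s)$ denotes $g(W_{0,0},L_{0,0})$. Since $f(t,s)=f(W_{0,0},L_{0,0})1$, applying this to $1$ gives \eqref{3.2}.

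For \eqref{3.3}, the relations give $L_{m,i}\,s=(s-mq)L_{m,i}$ and $L_{m,i}\,t=t\,L_{m,i}-mq\,W_{m,i}$. We will show by induction on total degree that
\begin{align*}
L_{m,i}\,g(t,s)=g(t,s-mq)\,L_{m,i}-mq\,\frac{\partial g}{\partial t}(t,s-mq)\,W_{m,i}
\end{align*}
as operators. This is the one step needing care, because of the ordering of operators. It suffices to check that the right-hand side is compatible with multiplication by $t$ and by $s$.

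\emph{Multiplication by $s$.} We have $L_{m,i}\,g s=(L_{m,i}\,g)\,s$. Moving $s$ to the left through $L_{m,i}$ and through $W_{m,i}$ shifts it to $s-mq$ in both terms, which matches the formula for $gs$.

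\emph{Multiplication by $t$.} We have
\begin{align*}
L_{m,i}\,g t=g(t,s-mq)\,(t\,L_{m,i}-mq\,W_{m,i})-mq\,\partial_t g(t,s-mq)\,t\,W_{m,i},
\end{align*}
using that $t$ commutes with $W_{m,i}$. The Leibniz rule $\partial_t(gt)=t\,\partial_t g+g$, together with the commutativity of $\mathbb C[t,s]$, shows this equals the formula for $gt$.

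Applying the operator identity to $1$ gives \eqref{3.3}. For the last assertion, suppose $[L_{m_0,i_0},W_{0,0}]1=0$. Since $[L_{m_0,i_0},W_{0,0}]=-m_0q\,W_{m_0,i_0}$, this says $m_0q\,W_{m_0,i_0}1=0$, so $m_0q\,W_{m_0,i_0}1=0$ as well. Hence the second term of \eqref{3.3} vanishes, and $L_{m_0,i_0}f(t,s)=f(t,s-m_0q)\,L_{m_0,i_0}1$.
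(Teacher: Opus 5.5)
Your proof is correct and follows the paper's argument: the same commutators with $L_{0,0}$ and $W_{0,0}$ give the shift rules, and the Leibniz rule $\partial_t(tg)=t\,\partial_t g+g$ drives the induction for \eqref{3.3}. The differences are cosmetic. You phrase \eqref{3.3} as an operator identity and induct on total degree, checking both $s$ and $t$, whereas the paper takes $f\in\mathbb C[s]$ as the base case and inducts on the $t$-degree; you also prove the final ``in particular'' clause via $[L_{m_0,i_0},W_{0,0}]=-m_0q\,W_{m_0,i_0}$, which the paper leaves implicit.
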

 \begin{proof}
     Since $[W_{m,i},W_{0,0}] f(t,s)=0$ and $[W_{m,i},L_{0,0}] f(t,s)=-mqW_{m,i} f(t,s)$, we obtain
     \begin{align*}
         W_{m,i} tf(t,s)=tW_{m,i} f(t,s),\
         W_{m,i} sf(t,s)=(s-mq)W_{m,i} f(t,s).
     \end{align*}
By induction on $a,b\in \mathbb N$, we conclude
 $
W_{m,i}(t^as^b)=t^a(s-mq)^b\,W_{m,i}1.$
Then for any $f(t,s)\in \mathbb C[t,s]$, we see that
$
W_{m,i}f(t,s)=f(t,s-mq)\,W_{m,i} 1,
$
which proves \eqref{3.2}.

We now prove \eqref{3.3}. From
$
[L_{m,i},W_{0,0}]=-mq\,W_{m,i}$ and $ [L_{m,i},L_{0,0}]=-mq\,L_{m,i},
$
 we deduce that
\begin{eqnarray}\label{L3.13}
L_{m,i}\bigl(t f(t,s)\bigr)=t\,L_{m,i}f(t,s)-mq\,W_{m,i}f(t,s),
\end{eqnarray}
\begin{eqnarray}\label{W3.14}
L_{m,i}\bigl(s f(t,s)\bigr)=(s-mq)\,L_{m,i}f(t,s).
\end{eqnarray}

We first consider the case $f(s)\in \mathbb C[s]$. By \eqref{W3.14}, we obtain
$
L_{m,i}f(s)=f(s-mq)\,L_{m,i}  1.$
Since in this case $\frac{\partial}{\partial t}f(s-mq)=0,$
formula \eqref{3.3} holds for all $f(s)\in \mathbb C[s]$.
Next, we prove \eqref{3.3} for general $f(t,s)\in \mathbb C[t,s]$ by induction on the degree of $f$ in $t$. Assume that
\begin{align*}
L_{m,i}f(t,s)=f(t,s-mq)L_{m,i}  1-mq\, \frac{\partial}{\partial t}f(t,s-mq)W_{m,i}1
\end{align*}
holds for some $f(t,s)\in \mathbb C[t,s]$. Then, using \eqref{L3.13} together with \eqref{3.2}, we get
\begin{align*}
L_{m,i}\bigl(t f(t,s)\bigr)
&=t\,L_{m,i}f(t,s)-mq\,W_{m,i}f(t,s)\\
&=t f(t,s-mq)L_{m,i}1\\
&\quad-mq\,t\frac{\partial}{\partial t}f(t,s-mq)W_{m,i}1
-mq\,f(t,s-mq)W_{m,i}1 \\
&=t f(t,s-mq)L_{m,i} 1
   -mq\, \Bigl(t\frac{\partial}{\partial t}f(t,s-mq)+f(t,s-mq)\Bigr)W_{m,i}1\\
&=t f(t,s-mq)L_{m,i}  1
   -mq\, \frac{\partial}{\partial t}\bigl(t f(t,s-mq)\bigr)W_{m,i}1.
\end{align*}
Thus \eqref{3.3} holds for $tf(t,s)$. By induction on the degree of $f$ in $t$, we conclude that
$
L_{m,i}f(t,s)=f(t,s-mq)L_{m,i} 1 -mq\,\frac{\partial}{\partial t}f(t,s-mq)W_{m,i} 1
$
for all $f(t,s)\in \mathbb C[t,s]$, which proves \eqref{3.3}.
 \end{proof}
For $(m,i)\in\mathbb{Z}\times\mathbb{N}$, we write
\begin{align}\label{WL3.15}
W_{m,i}  1=g_{m,i}(t,s)\in\mathbb{C}[t,s],\ L_{m,i} 1 =f_{m,i}(t,s)\in\mathbb{C}[t,s]
\end{align}
in the following.  Then it follows from \eqref{action} that  $g_{m,0}(t,s)=\lambda^{m}(t-mq\alpha)$ and $f_{m,0}(t,s)=\lambda^{m}\bigl(s+q h_{m}(q^{-1}t)\bigr)$. Clearly, $g_{0,0}(t,s)=t$ and $f_{0,0}(t,s)=s$.

\subsubsection{The polynomials $g_{m,i}(t,s)$}
In this subsection, we shall determine $g_{m,i}(t,s)$  of \eqref{WL3.15} for all $(m,i)\in\mathbb{Z}\times\mathbb{N}$.
 \begin{lemma}\label{lem3.3}
     For $(n,i)\in\mathbb{Z}\times\mathbb{N}$, we obtain $g_{n,i}(t,s)=g_{n,i}(t)$.
 \end{lemma}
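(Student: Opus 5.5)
We need to show $g_{n,i}$ does not depend on $s$. The relation to use is $[W_{m,0},W_{n,i}]=0$, combined with \eqref{3.2}. For any $m\in\mathbb Z$ we have
\begin{align*}
W_{m,0}W_{n,i}1&=W_{m,0}\,g_{n,i}(t,s)=g_{n,i}(t,s-mq)\,\lambda^m(t-mq\alpha),\\
W_{n,i}W_{m,0}1&=W_{n,i}\,\lambda^m(t-mq\alpha)=\lambda^m(t-mq\alpha)\,g_{n,i}(t,s-nq).
\end{align*}
The first line uses \eqref{action}, and the second uses \eqref{3.2} together with the fact that $\lambda^m(t-mq\alpha)$ does not involve $s$.

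Equating the two and cancelling the nonzero polynomial $\lambda^m(t-mq\alpha)$ in the domain $\mathbb C[t,s]$ gives
$$g_{n,i}(t,s-mq)=g_{n,i}(t,s-nq)\qquad\text{for all } m\in\mathbb Z.$$
Substituting $s\mapsto s+nq$ yields $g_{n,i}(t,s+(n-m)q)=g_{n,i}(t,s)$ for all $m$. Taking $m=n-1$, we get $g_{n,i}(t,s+q)=g_{n,i}(t,s)$ with $q\neq0$.

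Now write $g_{n,i}=\sum_{k=0}^d a_k(t)s^k$ with $a_d\neq0$, and suppose $d\ge1$. The coefficient of $s^{d-1}$ in $g_{n,i}(t,s+q)-g_{n,i}(t,s)$ is $dq\,a_d(t)\neq0$, which is a contradiction. Hence $d=0$ and $g_{n,i}(t,s)=g_{n,i}(t)$.

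There is no serious obstacle. The only care needed is the cancellation step. It is valid because $\mathbb C[t,s]$ is an integral domain and $t-mq\alpha\neq0$ as a polynomial. If one preferred to avoid it, taking $m=0$ and $m=1$ separately would suffice as well.
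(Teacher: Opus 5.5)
Your strategy is the same as the paper's: apply $[W_{m,0},W_{n,i}]=0$ to $1$, use \eqref{3.2}, cancel $\lambda^m(t-mq\alpha)$, and conclude that $g_{n,i}$ is periodic in $s$ with a nonzero period. However, your second displayed line misapplies \eqref{3.2}. That formula reads $W_{n,i}f(t,s)=f(t,s-nq)\,W_{n,i}1=f(t,s-nq)\,g_{n,i}(t,s)$. The shift $s\mapsto s-nq$ acts on the argument $f$, not on $W_{n,i}1$. Since $f=\lambda^m(t-mq\alpha)$ does not involve $s$, the correct value is $W_{n,i}W_{m,0}1=\lambda^m(t-mq\alpha)\,g_{n,i}(t,s)$, not $\lambda^m(t-mq\alpha)\,g_{n,i}(t,s-nq)$. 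So your relation $g_{n,i}(t,s-mq)=g_{n,i}(t,s-nq)$ is not justified by \eqref{3.2} at that stage. It is only known to hold after the lemma is proved, even though your final conclusion is correct.

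The repair is immediate and actually shortens your argument. With the correct value you get $\lambda^m(t-mq\alpha)\bigl(g_{n,i}(t,s-mq)-g_{n,i}(t,s)\bigr)=0$, hence $g_{n,i}(t,s-mq)=g_{n,i}(t,s)$ for every $m$. Taking $m=1$ gives $q$-periodicity directly, so the substitution $s\mapsto s+nq$ and the choice $m=n-1$ are not needed. This is exactly the paper's computation. The paper restricts to $m\in\mathbb Z^*$ because the instance $m=0$ is now a tautology, so your closing remark about using $m=0$ yields nothing. The cancellation of $\lambda^m(t-mq\alpha)$ cannot be avoided, but it is harmless, as you say. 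Your leading-coefficient argument, that $g(t,s+q)=g(t,s)$ with $q\neq 0$ forces independence of $s$, is correct and makes explicit a step the paper leaves implicit.
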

 \begin{proof}
 By \eqref{action} and Lemma \ref{lem3.2}, for $m\in\mathbb{Z}^*,(n,i)\in\mathbb{Z}\times\mathbb{N}$, we have
     \begin{align*}
         0=&[W_{m,0},W_{n,i}]  1\\
         =&\lambda^{m}(t-mq\alpha)g_{n,i}(t,s-mq)-\lambda^{m}(t-mq\alpha)g_{n,i}(t,s)\\
         =&\lambda^{m}(t-mq\alpha)(g_{n,i}(t,s-mq)-g_{n,i}(t,s)).
     \end{align*}
     Since $q\neq0$, the above identity implies that $g_{n,i}(t,s)$ is independent of $s$, so $g_{n,i}(t,s)=g_{n,i}(t)$.
 \end{proof}
\begin{lemma}\label{lem:basic-W-equations}
If $m(i+q)\neq0$, we have
\begin{align}
    \label{eq:gmi-from-g0i}
g_{m,i}(t)
&=
\frac{q\lambda^m  (t-mq\alpha)}{i+q}\,g'_{0,i}(t),
\\ \label{eq:g0i-differential}
(i+2q)(i+q)g_{0,i}(t)
&=
2q^2t\,g'_{0,i}(t)
+
q^2(t^2-(mq\alpha)^2)g''_{0,i}(t).
\end{align}
In particular, if $-2q\in\mathbb N$, then $g_{0,-2q}(t)\in\mathbb C$.
\end{lemma}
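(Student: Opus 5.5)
The plan is to obtain both identities by applying two suitable bracket relations of $\mathcal B(q)$ to the generator $1$. Each side is then evaluated with Lemma~\ref{lem3.2}, Lemma~\ref{lem3.3} and the explicit $i=0$ data $g_{m,0}=\lambda^m(t-mq\alpha)$ and $f_{m,0}=\lambda^m(s+qh_m(q^{-1}t))$ from \eqref{action}. The point is that only these explicitly known $\mathcal W$-actions occur, so the unknown polynomials $f_{m,i}$ never appear.

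\emph{First identity.} Definition~\ref{de3.1} gives $[L_{m,0},W_{0,i}]=-m(i+q)W_{m,i}$. Apply this to $1$. By Lemma~\ref{lem3.2} with $g_{0,i}=g_{0,i}(t)$,
\[
L_{m,0}\,g_{0,i}(t)=g_{0,i}(t)f_{m,0}(t,s)-mq\,g_{0,i}'(t)\,g_{m,0}(t).
\]
Since $W_{0,i}$ has shift $0$, we also have $W_{0,i}f_{m,0}(t,s)=f_{m,0}(t,s)g_{0,i}(t)$. Subtracting, the bracket equals $-mq\lambda^m(t-mq\alpha)g_{0,i}'(t)$. Setting this equal to $-m(i+q)g_{m,i}(t)$ and dividing by $m(i+q)\neq0$ gives \eqref{eq:gmi-from-g0i}.

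\emph{Second identity.} Use $[L_{-m,0},W_{m,i}]=m(i+2q)W_{0,i}$ and apply it to $1$. Lemma~\ref{lem3.2} gives
\[
L_{-m,0}\,g_{m,i}=g_{m,i}f_{-m,0}+mq\,g_{m,i}'\,\lambda^{-m}(t+mq\alpha),
\qquad
W_{m,i}f_{-m,0}(t,s)=f_{-m,0}(t,s-mq)\,g_{m,i}.
\]
Because $f_{-m,0}(t,s)-f_{-m,0}(t,s-mq)=mq\lambda^{-m}$, this yields
\[
mq\lambda^{-m}\bigl(g_{m,i}+(t+mq\alpha)g_{m,i}'\bigr)=m(i+2q)g_{0,i}.
\]
Now substitute \eqref{eq:gmi-from-g0i} for $g_{m,i}$ and its derivative $\frac{q\lambda^m}{i+q}\bigl(g_{0,i}'+(t-mq\alpha)g_{0,i}''\bigr)$. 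Then cancel $m\lambda^{-m}$ and multiply by $i+q$. The two first-order terms combine to $2q^2t\,g_{0,i}'$, and the second-order term is $q^2(t^2-(mq\alpha)^2)g_{0,i}''$, which is exactly \eqref{eq:g0i-differential}. The hypothesis $m(i+q)\neq0$ is what makes the first identity available at this index.

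\emph{The case $-2q\in\mathbb N$.} Take $i=-2q$ and any $m\neq0$. Since $q\neq0$, we have $i+q=-q\neq0$, so the hypothesis holds. The left side of \eqref{eq:g0i-differential} vanishes, leaving
\[
2t\,g'+(t^2-c^2)g''=0,\qquad g=g_{0,-2q},\ c=mq\alpha.
\]
If $g$ had degree $d\ge1$ with leading coefficient $a_d$, the coefficient of $t^d$ would be $a_d\,(2d+d(d-1))=a_d\,d(d+1)\neq0$, which is impossible. Hence $g_{0,-2q}\in\mathbb C$.

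The only delicate step is the bookkeeping of the $s$-shifts and signs in the second bracket computation.
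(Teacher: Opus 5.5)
Your proposal is correct and follows essentially the same route as the paper: the same two bracket relations, $[L_{m,0},W_{0,i}]=-m(i+q)W_{m,i}$ (the paper writes it as $[W_{0,i},L_{m,0}]=m(i+q)W_{m,i}$) and $[L_{-m,0},W_{m,i}]=m(i+2q)W_{0,i}$, applied to $1$ via Lemma~\ref{lem3.2}, followed by the same leading-coefficient argument for $i=-2q$. Your handling of the $s$-shifts and signs, and the combination into $2q^2t\,g'_{0,i}+q^2(t^2-(mq\alpha)^2)g''_{0,i}$, all check out.
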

\begin{proof}
From
$[W_{0,i},L_{m,0}]=m(i+q)W_{m,i}$, we obtain
\begin{align*}
m(i+q)W_{m,i}1
=
W_{0,i}L_{m,0}1-L_{m,0}W_{0,i}1.
\end{align*}
By \eqref{action} and Lemma~\ref{lem3.3}, we have
\begin{align*}
m(i+q)g_{m,i}(t)
&=
W_{0,i}\Bigl(\lambda^m\bigl(s+qh_m(q^{-1}t)\bigr)\Bigr)
-
L_{m,0}g_{0,i}(t)\\
&=
\lambda^m\bigl(s+qh_m(q^{-1}t)\bigr)g_{0,i}(t)
-
\lambda^m\bigl(s+qh_m(q^{-1}t)\bigr)g_{0,i}(t)\\
&\quad
+
mq\lambda^m(t-mq\alpha)g'_{0,i}(t)\\
&=
mq\lambda^m(t-mq\alpha)g'_{0,i}(t).
\end{align*}
Considering $m(i+q)\neq0$, we have
$
g_{m,i}(t)
=
\frac{q\lambda^m  (t-mq\alpha)}{i+q}\,g'_{0,i}(t),
$
which gives \eqref{eq:gmi-from-g0i}.
Then by
\eqref{action}  and
$[L_{-m,0},W_{m,i}]=m(i+2q)W_{0,i}$, we check that
 \begin{align*}
     m(i+2q)g_{0,i}(t)=&[L_{-m,0},W_{m,i}] 1\\
     =&L_{-m,0} g_{m,i}(t)-W_{m,i} L_{-m,0} 1\\
     =&\frac{2mq^2t}{i+q}g^\prime_{0,i}(t)+\frac{mq^2(t^2-(mq\alpha)^2)}{i+q}g^{\prime\prime}_{0,i}(t),
 \end{align*}
 which shows
 \eqref{eq:g0i-differential}.

Finally, suppose that $-2q\in\mathbb N$. Taking $i=-2q$ in
\eqref{eq:g0i-differential}, we obtain
\begin{align*}
2t\,g'_{0,-2q}(t)
+
(t^2-(mq\alpha)^2)g''_{0,-2q}(t)=0.
\end{align*}
Write
$
g_{0,-2q}(t)=\sum_{k=0}^{N}b_kt^k,$ where $ b_N\neq0.
$
Comparing the coefficient of $t^N$, we get
\begin{align*}
N(N+1)b_N=0,
\end{align*}
which implies $g_{0,-2q}(t)=\gamma_{1}$, where $\gamma_{1}\in\mathbb{C}$.
This completes the proof.
\end{proof}

\begin{lemma}\label{lem:ordinary-W-vanishing}
Suppose that $q\neq -1, \frac1n$ for any
$n\in\mathbb Z_{+}$. Then
$g_{m,i}(t)=0$
for all $(m,i)\in\mathbb Z\times\mathbb Z_{+}$ with $(m,i)\neq(0,-2q)$.
\end{lemma}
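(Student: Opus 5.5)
The plan is to first handle level $i=1$ directly, using Lemma~\ref{lem:basic-W-equations} together with one more commutator identity, and then propagate vanishing to all higher levels $i$ by induction, using the brackets $[W_{m,1},L_{n,i}]$.

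\emph{Step 1 (a degree constraint at a fixed level).} Fix $i\ge1$ and put $G=g'_{0,i}$, $N=\deg g_{0,i}$ (assume $g_{0,i}\neq0$). First, apply \eqref{eq:g0i-differential} and compare the coefficients of $t^N$. This gives $(i+2q)(i+q)=q^2N(N+1)$, so $i=q(N-1)$ or $i=-q(N+2)$; the case $i+q=0$ is handled separately below. Second, to select the correct root, compute $[L_{m,0},W_{n,i}]1$ with $m,n\neq0$ and $n\neq -m$ using \eqref{action} and Lemma~\ref{lem3.2}. Since $W_{n,i}f_{m,0}=f_{m,0}(t,s-nq)g_{n,i}$, this gives
\[
(nq-m(i+q))\,g_{m+n,i}=nq\lambda^m g_{n,i}-mq\lambda^m(t-mq\alpha)g'_{n,i}.
\]
Substitute $g_{k,i}=\frac{q\lambda^k}{i+q}(t-kq\alpha)G$ from \eqref{eq:gmi-from-g0i} and compare leading coefficients in $t$ (assuming $N\ge1$, so $\deg g_{k,i}=N$). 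This forces $nq-m(i+q)=nq-mqN$, that is, $i=q(N-1)$. If instead $N=0$, then $g_{m,i}=0$ for all $m\ne0$ by \eqref{eq:gmi-from-g0i}, and \eqref{eq:g0i-differential} gives $(i+2q)(i+q)g_{0,i}=0$. So $g_{0,i}=0$ unless $i\in\{-q,-2q\}$.

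\emph{Step 2 (level $i=1$).} If $N\ge1$, then $1=q(N-1)$ forces $q=1/(N-1)$ with $N\ge2$, which is excluded. If $N=0$ and $g_{0,1}\ne0$, then $1=-q$ (excluded) or $1=-2q$, and the latter is exactly the allowed exception $(0,1)=(0,-2q)$. For the degenerate case $i+q=0$, i.e.\ $q=-1$ at level $1$, the lemma's formula \eqref{eq:gmi-from-g0i} does not apply, but that case is excluded. Hence $g_{m,1}=0$ for all $(m,1)\ne(0,-2q)$.

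\emph{Step 3 (induction on $i$).} Assume $g_{m,1}=0$ for $m\ne0$. For any $m\neq0$, any $n$, and $i\ge1$, Lemma~\ref{lem3.2} gives
\[
[W_{m,1},L_{n,i}]1=W_{m,1}f_{n,i}-L_{n,i}g_{m,1}=f_{n,i}(t,s-mq)g_{m,1}-0=0 .
\]
On the other hand, this bracket equals $-(m(i+q)-n(1+q))W_{m+n,i+1}1$. Fix a target index $k=m+n$. The coefficient $m(i+q)-(k-m)(1+q)$ is linear in $m$ with slope $i+1+2q$. It is therefore nonzero for some $m\neq0$ unless $i+1=-2q$ and $k(1+q)=0$, i.e.\ unless $(k,i+1)=(0,-2q)$, since $q\ne-1$. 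Hence $g_{k,i+1}=0$ for all $(k,i+1)\ne(0,-2q)$. This argument needs no inductive hypothesis beyond level $1$, and it completes the proof.

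I expect the main obstacle to be Step 2 when $q=-\tfrac12$, where $W_{0,1}$ is allowed to act by the scalar $\gamma_1$. One must check that only $m\neq0$ is used in Step 3, and I have arranged for that. A second point to verify is the leading-coefficient comparison in Step 1 when $\alpha\ne0$, where the degree of $(t-kq\alpha)G$ is still $N$. That check is routine.
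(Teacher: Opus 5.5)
Your proof is correct, but it is organized differently from the paper's. The paper splits on $q$. For $1/q\notin\mathbb Z$ it only uses that $N=1+\frac1q$ or $N=-2-\frac1q$ cannot be a nonnegative integer, so $g_{0,1}=0$. For $q=-\frac12$ it uses $g_{0,1}=\gamma_1$. Only for $q=-\frac1n$ with $n\ge3$ does it compare leading degrees in $[L_{k,0},W_{m,1}]1$, which is exactly your Step~1 specialized to that case.

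You apply that comparison uniformly: a positive-degree $g_{0,i}$ with $i+q\neq0$ forces $i=q(N-1)$. At level one this rules out everything except $q=1/(N-1)$ in one stroke. It also explains why $q=\frac1n$ is exactly where the family $g_{m,i}=\lambda^ma^it^{1+in}$ of Lemma~\ref{lem:positive-reciprocal-W-action} appears, since there $N=1+in$. The root analysis from \eqref{eq:g0i-differential} at the start of Step~1 is therefore superfluous; you only need its $N=0$ consequence $(i+2q)(i+q)g_{0,i}=0$.

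For higher levels, the paper propagates through $[L_{m,i},W_{0,1}]1=-m(1+q)W_{m,i+1}1$. This requires knowing that $W_{0,1}$ acts by a scalar ($0$ or $\gamma_1$). The paper then recovers the $m=0$ components separately from $[L_{1,0},W_{-1,i}]1=-(i+2q)W_{0,i}1$. Your bracket $[W_{m,1},L_{n,i}]$ with $m\neq0$, combined with the observation that $m(i+1+2q)-k(1+q)$ is linear in $m$, handles every $k$ (including $k=0$) at once. It uses only $g_{m,1}=0$ for $m\neq0$.

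The paper's route is lighter in the generic case, where it reduces to an integrality check. Yours avoids the case split altogether.

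One small point should be stated explicitly, since your Step~1 was phrased under $g_{0,i}\neq0$: if $g_{0,1}=0$, then $g_{m,1}=0$ for $m\neq0$ follows directly from \eqref{eq:gmi-from-g0i}.
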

\begin{proof}
Assuming $g_{0,i}(t)\neq 0$, we can write
$
g_{0,i}(t)=\sum_{k=0}^{N}a_{k}t^k$ for  $a_N\neq0$.
Consider $m(i+q)\neq0$. Taking the coefficient of $t^N$ in \eqref{eq:g0i-differential}, we get
$
(i+2q)(i+q)a_N
=
q^2N(N+1)a_N,
$
which implies
\begin{align}\label{eq:N}
    N=1+\frac{i}{q}
\quad\text{or}\quad
N=-2-\frac{i}{q}.
\end{align}
We now consider the following three cases:

{\bf Case 1}.  $|\frac{1}{q}|\notin \mathbb{N}$.

By \eqref{eq:N}, we get $g_{0,1}(t)=0$. Since $[L_{m,i},W_{0,1}] 1=-m(1+q)W_{m,i+1} 1$ and $q\neq -1$, it follows that $g_{m,i}(t)=0$ for all $(m,i)\in \mathbb{Z}^{*}\times\mathbb Z_{+}$. From $[L_{1,0},W_{-1,i}] 1=-(i+2q)W_{0,i} 1$, we obtain $g_{0,i}(t)=0$ for $i\in\mathbb Z_{+}\setminus\{-2q\}$.

{\bf Case 2}. $q=-\frac{1}{2}$.

By Lemma~\ref{lem:basic-W-equations}, we have
$g_{0,1}(t)=\gamma_1\in\mathbb C.$
Since $g'_{0,1}(t)=0$, it follows from
\eqref{eq:gmi-from-g0i} that
$g_{m,1}(t)=0$
for all $m\in\mathbb Z^*$.
For   $(m,i)\in\mathbb Z^*\times\mathbb N$, applying
$[L_{m,i},W_{0,1}]1=-m(1+q)W_{m,i+1}1,$
we obtain
$g_{m,j}(t)=0$ for all $m\in\mathbb Z^*$ and $j\geq2$.
From
$[L_{1,0},W_{-1,i}]1=-(i+2q)W_{0,i}1$,
we get
$(i-1)g_{0,i}(t)=0,$
which gives
$g_{0,i}(t)=0$
for all $i\in\mathbb{Z}_{+}\setminus\{1\}$.

{\bf Case 3}. $q\in \{-\frac{1}{n}\mid n\in\mathbb{N}\} \setminus \{-\frac{1}{2},-1\}$.

In this case, we have $q=-\frac1n$ for some $n\geq3$.
Taking $i=1$ in \eqref{eq:N}, if $g_{0,1}(t)\neq0$, then
$
\deg g_{0,1}(t)=-2-\frac1q=n-2.
$
Therefore, we get
$
g_{0,1}(t)=ct^{n-2}+\text{lower degree terms}$
for some $c\neq0.$
By \eqref{eq:gmi-from-g0i}, for $m\neq0$, we have
\begin{align*}
g_{m,1}(t)
=
-\frac{\lambda^m}{n-1}(t+\frac mn\alpha)g'_{0,1}(t),
\end{align*}
which shows the coefficient of $t^{n-2}$ in $g_{m,1}(t)$ is
$-\frac{\lambda^m(n-2)c}{n-1}.$

Now for any $(k,m)\in \mathbb{Z}^*\times\mathbb{Z}$, applying
$[L_{k,0},W_{m,1}]1
=
\left(-\frac{m}{n}-k(1-\frac{1}{n})\right)W_{m+k,1}1$,
 we get
\begin{align*}
 -\frac{\lambda^k}{n}\left(mg_{m,1}(t)-k(t-kq\alpha)g'_{m,1}(t)\right)
=
\left(mq-k(1+q)\right)g_{m+k,1}(t).
\end{align*}
Comparing the highest degree   of the above equation, we obtain
$k(n-2)=-k(n-1).$
Since $k\neq0$, this implies $2n-3=0$, a contradiction to $n\geq3$. Then we have
$g_{0,1}(t)=0.$
By the same argument   in Case 1,
we conclude that
$g_{m,j}(t)=0$
for all $(m,j)\in\mathbb Z^*\times\mathbb Z_{+}$.

Finally, from
$[L_{1,0},W_{-1,i}]1=-(i+2q)W_{0,i}1$
and $-2q=\frac2n\notin\mathbb N$ for $n\geq3$, it follows that
$g_{0,i}(t)=0$
for all $i\in\mathbb Z_{+}$. Therefore, we have $g_{m,i}(t)=0$ for all $(m,i)\in\mathbb Z\times\mathbb Z_{+}$ with $(m,i)\neq(0,-2q)$.
This proves the  lemma.
\end{proof}

\begin{lemma}\label{lem:q-minus-one-W-vanishing}
Suppose that $q=-1$. Then
$
g_{m,i}(t)=0
$
for all $i\in\mathbb Z_+\setminus\{1,2\}.$  Moreover,
\begin{align*}
g_{m,1}(t)=\lambda^m\beta_1, \
g_{0,2}(t)=\gamma_1, \
g_{m,2}(t)=0 \ (m\neq0),
\end{align*}
where $\beta_1,\gamma_1\in\mathbb C$.
\end{lemma}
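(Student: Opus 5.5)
We specialize Lemma~\ref{lem:basic-W-equations} to $q=-1$ and solve the resulting differential equations for $g_{0,i}(t)$, then propagate the information through the bracket relations.

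\textbf{Step 1: degree constraint.} Suppose $g_{0,i}(t)\neq0$ with $i\ne 1$, so that $i+q\neq0$, and fix any $m\neq0$. Comparing leading coefficients in \eqref{eq:g0i-differential} gives \eqref{eq:N}, which now reads $N=1-i$ or $N=i-2$.

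\textbf{Step 2: the case $i=2$.} This is $i=-2q$, so the last part of Lemma~\ref{lem:basic-W-equations} gives $g_{0,2}=\gamma_1\in\mathbb C$. Since $g_{0,2}'=0$, formula \eqref{eq:gmi-from-g0i} (valid because $m(2+q)=m\ne0$) gives $g_{m,2}(t)=0$ for $m\ne0$.

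\textbf{Step 3: the case $i=1$.} Here $i+q=0$, so Lemma~\ref{lem:basic-W-equations} does not apply and this is the main obstacle. I would work with $[L_{k,0},W_{m,1}]=-mW_{m+k,1}$, since the coefficient is $m(0+q)-k(1+q)=-m$. Evaluating on $1$ via \eqref{action} and Lemma~\ref{lem3.2} gives
\begin{align*}
\lambda^k\bigl(g_{m,1}(t)+k(t+k\alpha)g'_{m,1}(t)\bigr)-g_{m,1}(t)\lambda^k\cdot 0 \;=\; -m\,g_{m+k,1}(t).
\end{align*}
To see this, note that $W_{m,1}f_{k,0}=\lambda^k(s+m-h_k(-t))g_{m,1}$, while $L_{k,0}g_{m,1}=\lambda^k(s-h_k(-t))g_{m,1}+k\lambda^k(t+k\alpha)g_{m,1}'$. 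Their difference is therefore $k\lambda^k(t+k\alpha)g'_{m,1}-m\lambda^k g_{m,1}$, so
\begin{align*}
k\lambda^k(t+k\alpha)g'_{m,1}(t)-m\lambda^kg_{m,1}(t)=-m\,g_{m+k,1}(t).
\end{align*}
Taking $m=0$ and $k\ne0$ gives $(t+k\alpha)g_{0,1}'=0$, so $g_{0,1}$ is a constant, say $\beta_1$. Taking $m=-k$ gives $k\lambda^k(t+k\alpha)g'_{-k,1}+k\lambda^kg_{-k,1}=k\beta_1$. Comparing top degrees, the leading coefficient picks up the factor $(N+1)\neq0$, which forces $N=0$; hence $g_{-k,1}=\lambda^{-k}\beta_1$ for every $k$, i.e.\ $g_{m,1}=\lambda^m\beta_1$. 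This is consistent with the general relation.

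\textbf{Step 4: $i\ge3$.} For $i\ge3$, Step~1 forces $N\in\{1-i,\,i-2\}$, so a nonzero $g_{0,i}$ has degree $i-2$. To kill it, I would use $[W_{0,i},L_{m,0}]=m(i-1)W_{m,i}$ together with $[L_{m,i-2},W_{0,2}]=m(i-1)W_{m,i}$ (coefficient $0\cdot(i-2-1)-m(2-1)=-m$, up to sign). Evaluating the latter on $1$, where $W_{0,2}$ acts by the scalar $\gamma_1$, Lemma~\ref{lem3.2} gives $W_{0,2}L_{m,i-2}1=\gamma_1f_{m,i-2}=L_{m,i-2}W_{0,2}1$, so the bracket vanishes. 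Hence $g_{m,i}=0$ for $m\ne0$ and $i\ge3$. Then $[L_{1,0},W_{-1,i}]1=-(i-2)W_{0,i}1$ gives $g_{0,i}=0$ for $i\ge3$. This completes the proof.
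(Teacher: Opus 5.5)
Your proposal is correct and follows essentially the same route as the paper. You get $g_{0,2}=\gamma_1$ and $g_{m,2}=0$ from Lemma~\ref{lem:basic-W-equations}, you get $g_{0,1}=\beta_1$ and $g_{m,1}=\lambda^m\beta_1$ by specializing the $[L_{k,0},W_{m,1}]$ relation to the paper's brackets $[L_{k,0},W_{0,1}]$ and $[L_{-m,0},W_{m,1}]$, and you handle $i\ge3$ via $[L_{m,i-2},W_{0,2}]$ (with $W_{0,2}$ acting by the scalar $\gamma_1$) followed by $[L_{1,0},W_{-1,i}]$. The remaining issues are cosmetic: your first display in Step~3 should be replaced by the corrected one, the bracket is $[L_{m,i-2},W_{0,2}]=-mW_{m,i}$ rather than $m(i-1)W_{m,i}$, and Step~1 is never actually used; none of this affects the conclusion.
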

\begin{proof}
By Lemma~\ref{lem:basic-W-equations}, we get
$g_{0,2}(t)=\gamma_1\in\mathbb C.$
Since $g'_{0,2}(t)=0$, it follows
from \eqref{eq:gmi-from-g0i} that
$g_{m,2}(t)=0$
for all $m\in\mathbb Z^*$.
Applying
$
[L_{m,i},W_{0,2}]1=-mW_{m,i+2}1,
$
we obtain
$g_{m,i+2}(t)=0$
for all $(m,i)\in\mathbb Z^*\times\mathbb N$. Hence
$g_{m,j}(t)=0$
for all $m\in\mathbb Z^*$ and $j\geq2$.
Then  by
$[L_{1,0},W_{-1,i}]1=(2-i)W_{0,i}1,$
we get
$g_{0,i}(t)=0$
for all $i\ge3$.
It remains to determine $g_{m,1}(t)$. Since
$
[L_{m,0},W_{0,1}]1=0,
$  we obtain
$
m\lambda^m(t+m\alpha)g'_{0,1}(t)=0.
$
Hence $g'_{0,1}(t)=0$,  that is to say,
$g_{0,1}(t)=\beta_1$
for some $\beta_1\in\mathbb C$.
Based on
$[L_{-m,0},W_{m,1}]1=-mW_{0,1}1$, we have
$
m\beta_1
=
m\lambda^{-m}g_{m,1}(t)
+
m\lambda^{-m}(t-m\alpha)g'_{m,1}(t).
$
If $m\neq 0$, we obtain
$
g_{m,1}(t)+(t-m\alpha)g'_{m,1}(t)=\lambda^m\beta_1.
$
Comparing the highest degree term
shows that $g_{m,1}(t)$ is constant. Therefore,
$
g_{m,1}(t)=\lambda^m\beta_1$ for all $m\in\mathbb{Z}$.
This completes the proof.
\end{proof}
\begin{lemma}\label{lem:positive-reciprocal-W-action}
Suppose that $q=\frac1n$ for some
$n\in\mathbb Z_+$. Then the following statements hold.

\begin{itemize}
\item If $\alpha\neq0$, then
$
g_{m,i}(t)=0
$
for all $(m,i)\in\mathbb Z\times\mathbb Z_+$.

\item If $\alpha=0$, then there exists $a\in\mathbb C$ such that
$
g_{m,i}(t)=\lambda^m a^i t^{1+in}
$
for all $(m,i)\in\mathbb Z\times\mathbb N$.
\end{itemize}
\end{lemma}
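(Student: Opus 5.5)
We treat the two cases separately. In both, the starting point is the differential equation \eqref{eq:g0i-differential} of Lemma~\ref{lem:basic-W-equations}. Since $q=\frac1n>0$, we have $i+q\neq0$ for every $i\in\mathbb N$, so \eqref{eq:g0i-differential} holds for every $i\in\mathbb N$ and \emph{every} $m\in\mathbb Z^*$. Likewise \eqref{eq:gmi-from-g0i} expresses each $g_{m,i}(t)$, $m\neq0$, through $g'_{0,i}(t)$.

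\emph{Case $\alpha\neq0$.} Fix $i\in\mathbb Z_+$. Subtract \eqref{eq:g0i-differential} for two different values $m\neq m'$ in $\mathbb Z^*$. Everything except the term $-q^2(mq\alpha)^2g''_{0,i}(t)$ is independent of $m$, so $(m^2-m'^2)q^4\alpha^2g''_{0,i}(t)=0$. Taking $m'=-2m$, say, gives $g''_{0,i}=0$, so $\deg g_{0,i}\le1$. On the other hand, comparing leading coefficients as in the proof of Lemma~\ref{lem:ordinary-W-vanishing} (i.e.\ \eqref{eq:N}) gives $N=1+in\ge2$ or $N=-2-in<0$ for any nonzero $g_{0,i}$. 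Neither value is compatible with $\deg g_{0,i}\le 1$. Hence $g_{0,i}=0$, and then \eqref{eq:gmi-from-g0i} gives $g_{m,i}=0$ for all $m\neq0$.

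\emph{Case $\alpha=0$.} Now \eqref{eq:g0i-differential} reads
\[(i+2q)(i+q)g_{0,i}=2q^2t\,g'_{0,i}+q^2t^2g''_{0,i},\]
an Euler equation. Comparing the coefficient of $t^k$ gives $\bigl((i+2q)(i+q)-q^2k(k+1)\bigr)a_k=0$. The only nonnegative root is $k=1+in$, so $g_{0,i}(t)=c_it^{1+in}$ for some $c_i\in\mathbb C$, with $c_0=1$ since $g_{0,0}=t$. Substituting into \eqref{eq:gmi-from-g0i} and using the identity $q(1+in)=i+q$, which holds because $q=\frac1n$, yields $g_{m,i}(t)=\lambda^mc_it^{1+in}$ for all $m$. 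This is consistent with $g_{m,0}=\lambda^mt$.

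It remains to prove $c_{i+j}=c_ic_j$. This is the step I expect to be the main obstacle, since the $W$'s commute and carry no such information, while the $L_{m,i}1=f_{m,i}$ are not yet known. The plan is to use $[L_{m,i},W_{0,j}]=-m(j+q)W_{m,i+j}$ applied to $1$ and expand via Lemma~\ref{lem:basic-W-equations}'s ingredients, namely Lemma~\ref{lem3.2}. By \eqref{3.3},
\[L_{m,i}g_{0,j}(t)=g_{0,j}f_{m,i}-mq\,g'_{0,j}g_{m,i},\]
and by \eqref{3.2}, $W_{0,j}f_{m,i}=f_{m,i}\,g_{0,j}$. So the unknown $f_{m,i}$ cancels, and we are left with
\[mq\,g'_{0,j}(t)g_{m,i}(t)=m(j+q)g_{m,i+j}(t).\]
Taking $m=1$ and inserting the formulas above gives
\[q(1+jn)c_jc_i\lambda t^{1+(i+j)n}=(j+q)c_{i+j}\lambda t^{1+(i+j)n}.\]
Since $q(1+jn)=j+q$, this yields $c_{i+j}=c_ic_j$. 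Setting $a=c_1$ we get $c_i=a^i$ for all $i\in\mathbb N$, which is the claimed formula $g_{m,i}(t)=\lambda^ma^it^{1+in}$.
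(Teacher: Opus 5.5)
Your proof is correct, but the key multiplicativity step takes a different and shorter route than the paper.

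The case $\alpha\neq0$ and the reduction to $g_{m,i}(t)=\lambda^m c_i t^{1+in}$ via the Euler equation follow the paper essentially verbatim. The only cosmetic difference is that the paper substitutes a linear $g_{0,i}$ back into the equation, whereas you invoke the leading-degree constraint \eqref{eq:N}.

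The real difference is how you prove $c_{i+j}=c_ic_j$.

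\textbf{The paper's route.} It applies $[L_{0,1},W_{1,i}]=(1+q)W_{1,i+1}$ to $1$. Because $W_{1,i}$ shifts $s$, the unknown $f_{0,1}$ survives through the difference $f_{0,1}(t,s)-f_{0,1}(t,s-q)$. This only shows that $f_{0,1}$ is linear in $s$ and gives the recursion $a_{i+1}=\frac{a_2}{a_1}a_i$, with the ratio still undetermined. To fix the ratio, the paper needs two more relations:
\begin{itemize}
\item $[L_{0,1},L_{1,0}]=(1+q)L_{1,1}$, to compute the $s$-coefficient of $f_{1,1}$;
\item $[L_{1,1},W_{1,1}]=0$, to identify that coefficient with $g'_{1,1}$.
\end{itemize}
Together these give $a_2=a_1^2$. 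The paper also needs a separate argument when $a_1=0$.

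\textbf{Your route.} You use $[L_{m,i},W_{0,j}]=-m(j+q)W_{m,i+j}$ instead. This puts the shift on the $L$ side, where it acts on the $s$-independent $g_{0,j}(t)$, while $W_{0,j}$ acts by pure multiplication. So $f_{m,i}$ appears as $g_{0,j}f_{m,i}$ in both terms and cancels exactly. What remains is
\[
m(j+q)\,g_{m,i+j}=mq\,g'_{0,j}\,g_{m,i},
\]
the natural extension of \eqref{eq:gmi-from-g0i}, which is its $i=0$ instance. Combined with $q(1+jn)=j+q$, this gives $c_{i+j}=c_ic_j$ for all $i,j$ at once. It needs no case distinction on $c_1$ and no information about any $f_{m,i}$.

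The partial information the paper extracts about $f_{0,1}$ and $f_{1,1}$ is not needed later, because Lemma~\ref{lem:positive-reciprocal-L-action} re-derives the $s$-linearity independently from $[L_{m,i},W_{1,0}]$. Your shortcut therefore loses nothing and is strictly more economical.
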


\begin{proof}
Let $i\in\mathbb Z_+$. Since $q=\frac1n$, we obtain $(i+kq) \neq0$ for $k\in\mathbb{Z}_+$.
From \eqref{eq:g0i-differential} and $m\neq0$, one has
\begin{eqnarray}\label{eq3.18}
(i+2q)(i+q)g_{0,i}(t)
=
2q^2t\,g'_{0,i}(t)
+
q^2t^2g''_{0,i}(t)
-
m^2q^4\alpha^2g''_{0,i}(t).
\end{eqnarray}
 Now we give the following two cases.

{\bf Case 1}. $\alpha\neq0$.

Considering the highest degree of $m$ in \eqref{eq3.18}, we  have
$
g''_{0,i}(t)=0,
$
which  shows $\deg g_{0,i}(t)\leq1$.  Then we can write $
g_{0,i}(t)=a_{1,i}t+a_{0,i}
$ for $i\in\mathbb{Z}_+,a_{1,i},a_{0,i}\in\mathbb{C}$. Substituting this into \eqref{eq3.18} gives $g_{0,i}(t)=0$ for all $i\in\mathbb{Z}_+$.
By \eqref{eq:gmi-from-g0i}, we get
$
g_{m,i}(t)=0
$
for all $(m,i)\in\mathbb Z\times\mathbb Z_+$.

{\bf Case 2}. $\alpha=0$.

\eqref{eq3.18} becomes
$
(i+2q)(i+q)g_{0,i}(t)
=
2q^2t\,g'_{0,i}(t)+q^2t^2g''_{0,i}(t).
$
Writing $g_{0,i}(t)=\sum_k c_kt^k$, comparison of each nonzero term gives
$
q^2k(k+1)=(i+2q)(i+q).
$
Again the only nonnegative solution is $k=1+\frac iq=1+in$. Then we get
$
g_{0,i}(t)=a_i t^{1+in}
$
for some $a_i\in\mathbb C$. By \eqref{eq:gmi-from-g0i}, we obtain
\begin{align}\label{eq:gmi-ai}
g_{m,i}(t)=\lambda^m a_i t^{1+in}
\end{align}
for all $(m,i)\in\mathbb{Z}\times\mathbb{N}$ and $a_0=1$.
Now we determine the coefficients $a_i$ for $i\in\mathbb{Z}_+$ in \eqref{eq:gmi-ai}.

If $a_1=0$, then
$g_{1,1}(t)=0$. Applying
$[L_{0,1},W_{1,i}]=(1+q)W_{1,i+1}$
to $1$, we have
\begin{align*}
(1+q)g_{1,i+1}(t)
=
g_{1,i}(t)\bigl(f_{0,1}(t,s)-f_{0,1}(t,s-q)\bigr).
\end{align*}
Thus $g_{1,2}(t)=0$, and then inductively $g_{1,i}(t)=0$ for all
$i\in\mathbb Z_+$. Hence $a_i=0$ for all $i\in \mathbb Z_+$, and the conclusion holds
with $a=0$.

Now assume $a_1\neq0$.
According to
$[L_{0,1},W_{1,1}]1=(1+q)W_{1,2}1$, we obtain
\begin{align*}
(1+q)a_2t^{1+2n}
=
a_1t^{1+n}\bigl(f_{0,1}(t,s)-f_{0,1}(t,s-q)\bigr).
\end{align*}
Then $f_{0,1}(t,s)-f_{0,1}(t,s-q)\in\mathbb C[t]$. Set
$
f_{0,1}(t,s)=A_{0,1}(t)s+B_{0,1}(t),
$ where $A_{0,1}(t),B_{0,1}\in\mathbb C[t]$.
Note that
$
f_{0,1}(t,s)-f_{0,1}(t,s-q)=qA_{0,1}(t).
$
Therefore,
\begin{align*}
A_{0,1}(t)=\frac{(1+q)a_2}{qa_1}t^n.
\end{align*}

For $i\in\mathbb N$, applying
$[L_{0,1},W_{1,i}]1=(1+q)W_{1,i+1}1$, we get
\begin{equation}\label{eq:ai-recursion}
a_{i+1}=\frac{a_2}{a_1}a_i.
\end{equation}

Next by using
$[L_{0,1},L_{1,0}]1=(1+q)L_{1,1}1$, we have
\begin{align}\label{eqf3.21}
    (1+q)f_{1,1}(t,s)=\lambda\big(s+qh_1(q^{-1}t)\big)\big(f_{0,1}(t,s)-f_{0,1}(t,s-q)\big)+\lambda qt\frac{\partial}{\partial t}f_{0,1}(t,s),
    \end{align}
which implies ${\rm deg}_s(f_{1,1}(t,s))\le 1$. Then we can write
$f_{1,1}(t,s)=A_{1,1}(t)s+B_{1,1}(t)$, where $A_{1,1}(t), B_{1,1}(t)\in\mathbb{C}[t]$.
  Comparing the coefficients of $s$ in \eqref{eqf3.21}, we obtain
$
A_{1,1}(t)
=
\lambda(n+1)\frac{a_2}{a_1}t^n.
$
On the other hand, from
$[L_{1,1},W_{1,1}]1=0$
  and   Lemma~\ref{lem3.2}, we see that
$
A_{1,1}(t)=g'_{1,1}(t).
$
By \eqref{eq:gmi-ai},
we have
$
A_{1,1}(t)=\lambda(n+1)a_1t^n.
$
Comparing the two expressions for $A_{1,1}(t)$, we get
$
a_2=a_1^2.
$
Hence by \eqref{eq:ai-recursion}, one has  $a_i=a_1^i$ for all $i\in\mathbb N$. Setting $a=a_1$, then \eqref{eq:gmi-ai} becomes
$
g_{m,i}(t)=\lambda^m a^i t^{1+in},$ where $ (m,i)\in\mathbb Z\times\mathbb N.
$
This proves the lemma.
\end{proof}

\subsubsection{The polynomials $f_{m,i}(t,s)$}
In this subsection, we shall determine $f_{m,i}(t,s)$  of \eqref{WL3.15} by the following lemmas for all $(m,i)\in\mathbb{Z}\times\mathbb{N}$.

\begin{lemma}\label{lem:ordinary-L-vanishing}
Suppose $q\neq -1$, and suppose that either
$q\neq \frac1n$ for all $n\in\mathbb Z_+$, or $\alpha\neq0$.
 Then
$
f_{m,i}(t,s)=0
$
for all $(m,i)\in\mathbb Z\times\mathbb Z_+$, except possibly
$(m,i)=(0,-2q)$. Moreover, if $-2q\in\mathbb Z_+$, then
$
f_{0,-2q}(t,s)=\gamma_2
$
for some $\gamma_2\in\mathbb C$.
\end{lemma}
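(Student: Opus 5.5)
Under the hypotheses, Lemmas \ref{lem:ordinary-W-vanishing} and \ref{lem:positive-reciprocal-W-action} give $g_{m,i}(t)=0$ for all $(m,i)\in\mathbb Z\times\mathbb Z_+$ with $(m,i)\neq(0,-2q)$. When $-2q\in\mathbb Z_+$ we also have $g_{0,-2q}=\gamma_1$. In every such case $g'_{m,i}=0$ for $i\ge1$. By Lemma \ref{lem3.2}, $L_{m,i}$ then acts for $i\geq 1$ by
\[
L_{m,i}f(t,s)=f(t,s-mq)\,f_{m,i}(t,s).
\]
So $L_{m,i}$ is a shift in $s$ followed by multiplication by $f_{m,i}$. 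The plan is to use brackets with the $i=0$ operators, whose action \eqref{action} is known, to force $f_{m,i}$ to vanish.

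\textbf{Step 1: determine $f_{0,i}$.} Apply $[L_{0,i},W_{m,0}]=m(i+q)W_{m,i}$ to $1$. The right side is $m(i+q)g_{m,i}=0$ for $m\neq0$, since $i+q\neq0$ whenever $g_{m,i}$ could be nonzero. The left side is
\[
\lambda^m(t-mq\alpha)\bigl(f_{0,i}(t,s-mq)-f_{0,i}(t,s)\bigr)
\]
up to sign. Hence $f_{0,i}$ is independent of $s$; write $f_{0,i}=f_{0,i}(t)$. Next apply $[L_{0,i},L_{m,0}]=m(i+q)L_{m,i}$ to $1$. Using \eqref{action}, the multiplication parts cancel and we are left with
\[
m(i+q)f_{m,i}=\pm mq\lambda^m(t-mq\alpha)f'_{0,i}(t),
\]
so $f_{m,i}$ is determined by $f'_{0,i}$ and is independent of $s$. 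This mirrors \eqref{eq:gmi-from-g0i}.

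\textbf{Step 2: a differential equation for $f_{0,i}$.} Use $[L_{-m,0},L_{m,i}]=m(i+2q)L_{0,i}$ on $1$. This yields the analogue of \eqref{eq:g0i-differential}:
\[
(i+2q)(i+q)f_{0,i}=2q^2tf'_{0,i}+q^2\bigl(t^2-(mq\alpha)^2\bigr)f''_{0,i}.
\]
The derivative term $-mq\,\partial_t(\cdot)W_{m,i}1$ drops out because $g_{m,i}=0$. Note that $L_{-m,0}$ does act by a $t$-derivative on $f_{m,i}(t)$.

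\textbf{Step 3: kill $f_{0,i}$.} We need to rule out the degree-$N$ solutions with $N=1+i/q$ or $N=-2-i/q$, as in Lemma \ref{lem:ordinary-W-vanishing}. I expect this to be the main obstacle. The $W$-bracket should help: $[L_{m,i},W_{k,0}]=(k(i+q)-m q)W_{m+k,i}=0$ when $g$ vanishes. The left side equals
\[
\lambda^k(t-kq\alpha)f_{m,i}-\lambda^k(t-(m+k)q\alpha)f_{m,i}+\text{(derivative of }g_{k,0}\text{ times }g_{m,i}),
\]
and the last term is zero. So the relation gives $mq\alpha\lambda^k f_{m,i}=0$, which forces $f_{m,i}=0$ for $m\neq0$ if $\alpha\neq0$. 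Then Step 1 gives $f'_{0,i}=0$, and Step 2 gives $(i+2q)(i+q)f_{0,i}=0$. Thus $f_{0,i}=0$ unless $i=-2q$, and $f_{0,-2q}$ is a constant $\gamma_2$.

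When $\alpha=0$, I would argue as in the three cases of Lemma \ref{lem:ordinary-W-vanishing}:
\begin{itemize}
\item If $|1/q|\notin\mathbb N$, the degree constraint forces $f_{0,1}=0$. Then $[L_{m,i},L_{0,1}]=-m(1+q)L_{m,i+1}$ propagates vanishing to $f_{m,j}$ for all $m\ne0$ and $j\ge1$ (induct on $j$, using $q\neq-1$). Finally $[L_{1,0},L_{-1,i}]$ gives $(i+2q)f_{0,i}=0$.
\item If $q=-1/2$, then $f_{0,1}$ is constant, which yields the $\gamma_2$ case.
\item If $q=-1/n$ with $n\ge3$, compare leading degrees in $[L_{k,0},L_{m,1}]$, as in Case 3 of that lemma, to reach the contradiction $2n-3=0$.
\item If $q=1/n$ and $\alpha\neq0$, the $\alpha$ argument above applies.
\end{itemize}
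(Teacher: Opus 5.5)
\textbf{The gap: the $\alpha\neq0$ argument in Step 3.} Your Steps 1 and 2 are sound. Using $[L_{0,i},W_{m,0}]$ to get $s$-independence of $f_{0,i}$ is even slightly cleaner than the paper's route through the $[L_{0,i},L_{m,0}]$ relation. Step 3, however, rests on a miscomputation. By Lemma~\ref{lem3.2}, $W_{k,0}$ acts by $W_{k,0}r(t,s)=\lambda^k(t-kq\alpha)\,r(t,s-kq)$, so
\[
[L_{m,i},W_{k,0}]1=\lambda^k(t-kq\alpha)\bigl(f_{m,i}(t,s)-f_{m,i}(t,s-kq)\bigr)-mq\lambda^k g_{m,i}(t).
\]
No factor $t-(m+k)q\alpha$ appears anywhere. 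Once $g_{m,i}=0$ and $f_{m,i}$ is $s$-independent, this bracket vanishes identically. It is exactly the relation that yields $s$-independence (the paper uses it with $k=-m$), and it carries no further information. In particular it does not give $mq\alpha\lambda^kf_{m,i}=0$.

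Consequently every case with $\alpha\neq0$, which you route through this argument, is left unproved. The case $q=\frac1n$, $\alpha\neq0$ is the serious one: there the hypothesis $\alpha\neq0$ is doing all the work, since for $\alpha=0$ the modules $\Omega_n(\lambda,a,b,\mathbf h(0))$ have $f_{m,i}\neq0$.

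\textbf{The fix.} The missing idea is the one from Case 1 of Lemma~\ref{lem:positive-reciprocal-W-action}.
\begin{itemize}
\item Your Step 2 equation holds for every $m\neq0$. When $\alpha\neq0$, its only $m$-dependent term is $-m^2q^4\alpha^2f''_{0,i}$, so $f''_{0,i}=0$.
\item Write $f_{0,i}=a_1t+a_0$. Substituting gives $(i+2q)(i+q)a_0=0$ and $i(i+3q)a_1=0$.
\item For $q=\frac1n>0$ and $i\ge1$ all these factors are nonzero, so $f_{0,i}=0$.
\item Your Step 1 formula, legitimate since $i+q\neq0$, then gives $f_{m,i}=0$.
\end{itemize}

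For $q\neq\frac1n$ no new idea is needed. Your three case arguments never use $\alpha=0$: the $(mq\alpha)^2f''_{0,i}$ term has lower degree, and the leading-degree comparison in $[L_{k,0},L_{m,1}]$ does not see $\alpha$. So run them for all $\alpha$ and drop the $W$-bracket argument entirely.

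\textbf{A smaller point.} Step 1 divides by $i+q$, so for $q\in\{-2,-3,\dots\}$ the index $i=-q$ is not covered by your formula for $f_{m,i}$. In your Case 1 this is harmless: $L_{0,1}=0$ kills $L_{m,-q}$ for $m\neq0$ via $[L_{m,-q-1},L_{0,1}]=-m(1+q)L_{m,-q}$, and then $[L_{1,0},L_{-1,-q}]=-qL_{0,-q}$ kills $L_{0,-q}$. You should say this explicitly. The paper handles the same index through $[L_{m,-q-1},L_{n,1}]$.
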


\begin{proof}
By Lemmas \ref{lem:ordinary-W-vanishing} and \ref{lem:positive-reciprocal-W-action}, we know
  that
$
g_{m,i}(t)=0
$
for all $(m,i)\in\mathbb Z\times\mathbb Z_+$, except possibly
$(m,i)=(0,-2q)$.
From  Lemma~\ref{lem3.2}, for
$(m,i)\neq(0,-2q)$ we have
$
L_{m,i}r(t,s)=r(t,s-mq)f_{m,i}(t,s)$,
where $r(t,s)\in\mathbb C[t,s].
$

Now let $(m,i)\neq(0,-2q)$.   From   $[L_{m,i},W_{-m,0}]1=0$, we deduce that
$
\lambda^{-m}(t+mq\alpha)
\bigl(f_{m,i}(t,s)-f_{m,i}(t,s+mq)\bigr)=0,
$
which shows
\begin{align*}
f_{m,i}(t,s)=f_{m,i}(t),\qquad m\neq0.
\end{align*}
Applying $[L_{0,i},L_{m,0}]1=m(i+q)L_{m,i}1$, we check
\begin{align}\label{fmi3.23}
m(i+q)f_{m,i}(t)
=&\lambda^m\bigl(s+qh_m(q^{-1}t)\bigr)
\bigl(f_{0,i}(t,s)-f_{0,i}(t,s-mq)\bigr)  \\
&\nonumber
+mq\lambda^m(t-mq\alpha)
\frac{\partial}{\partial t}f_{0,i}(t,s-mq).
\end{align}
Since the left hand side is independent of $s$, comparing the highest degree of   $s$ shows that $f_{0,i}(t,s)\in\mathbb C[t]$. Based on   \eqref{fmi3.23}, we have for $i\neq -q$,
\begin{align}\label{eq:fmi}
f_{m,i}(t)
=
\frac{\lambda^mq(t-mq\alpha)}{i+q}f'_{0,i}(t).
\end{align}
Now it follows from  $[L_{-m,0},L_{m,i}]1=m(i+2q)L_{0,i}1$ that
\begin{align*}
(i+2q)(i+q)f_{0,i}(t)
=
2q^2t f'_{0,i}(t)
+
q^2\big(t^2-(mq\alpha)^2\big)f''_{0,i}(t).
\end{align*}
This is exactly the same differential equation as in
Lemma~\ref{lem:basic-W-equations}, with $g_{0,i}$ replaced by $f_{0,i}$.
Therefore the same highest degree argument used in
Lemmas~\ref{lem:ordinary-W-vanishing} and
\ref{lem:positive-reciprocal-W-action} gives
\begin{align*}
f_{0,i}(t)=0
\quad\text{for } i\notin\{-q,-2q\},
\end{align*}
and if $-2q\in\mathbb Z_+$,
\begin{align*}
f_{0,-2q}(t)=\gamma_2\in\mathbb C.
\end{align*}
Then by \eqref{eq:fmi},
$f_{m,i}(t)=0$ for all remaining $m\neq0$ with $i\neq-q$. 

It remains to consider $-1\neq i=-q\in\mathbb Z_+$. The above argument gives $L_{m,-q-1}=L_{m,1}=0$ for all $m\in\mathbb Z$. Since
\begin{align*}
[L_{m,-q-1},L_{n,1}]=\bigl((-q-1)m-n\bigr)L_{m+n,-q}
\end{align*}
we have $L_{m,-q}=0$ for every $m\in\mathbb Z$.
This completes the proof.
\end{proof}
\begin{lemma}\label{lem:positive-reciprocal-L-action}
Suppose that $q=\frac1n$ for some $n\in\mathbb Z_+$, and
$\alpha=0$.
Then there exists $b\in\mathbb C$ such that
\begin{align*}
f_{m,i}(t,s)
=
\lambda^m a^i t^{in}
\left((1+in)s+\frac1n h_m(nt)-mi\right)
+i b\lambda^m a^{i-1}t^{1+in}.
\end{align*} 
Here set $i b a^{i-1}=0$ for $i=0$.
\end{lemma}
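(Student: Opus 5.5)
We determine $f_{m,i}$ from the already-known $g_{m,i}(t)=\lambda^m a^i t^{1+in}$ (Lemma~\ref{lem:positive-reciprocal-W-action}) and from the commutation relations with the $i=0$ part, whose action is given by \eqref{action} with $q=\frac1n$ and $\alpha=0$. The case $i=0$ is exactly \eqref{action}, since $qh_m(q^{-1}t)=\frac1nh_m(nt)$. So we fix $i\ge1$.

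\textbf{Step 1: $f_{m,i}$ is linear in $s$.} We use $[L_{m,i},W_{k,0}]=(k(i+q)-mq)W_{m+k,i}$ applied to $1$, together with Lemma~\ref{lem3.2}. This gives
\begin{align*}
g_{k,0}(t)\bigl(f_{m,i}(t,s-kq)-f_{m,i}(t,s)\bigr)
=\text{(an explicit polynomial in $t$ only)}.
\end{align*}
The right-hand side is built from $g_{m,i}$, $g'_{m,i}$ and $g_{k,0}$. Consequently $f_{m,i}(t,s)-f_{m,i}(t,s-kq)$ is independent of $s$, so $f_{m,i}=A_{m,i}(t)s+B_{m,i}(t)$. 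Reading off the coefficient then determines $A_{m,i}$. The cleanest route is probably $[L_{m,i},W_{m,i}]=0$, as already used for $A_{1,1}$ in the proof of Lemma~\ref{lem:positive-reciprocal-W-action}; it gives $qA_{m,i}=mq\,g'_{m,i}/g_{m,i}\cdot g_{m,i}$-type identities. I would instead use $[L_{m,i},W_{0,0}]$ and $[L_{0,0},\cdot]$ directly via $k=-m$. Either way, I expect
\[
A_{m,i}(t)=\lambda^m a^i(1+in)t^{in},
\]
consistent with the displayed formula.

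\textbf{Step 2: reduce $B_{m,i}$ to $B_{0,i}$.} We use $[L_{0,i},L_{m,0}]=m(i+q)L_{m,i}$ on $1$, exactly as in \eqref{eqf3.21} and \eqref{fmi3.23}:
\begin{align*}
m(i+q)f_{m,i}=\lambda^m\Bigl(s+\tfrac1nh_m(nt)\Bigr)qA_{0,i}(t)+\tfrac mn\lambda^m t\,\partial_t f_{0,i}(t,s-\tfrac mn).
\end{align*}
The $s$-coefficient is consistent with Step 1 and in fact recovers $A_{0,i}=a^i(1+in)t^{in}$ recursively. The $s$-free part expresses $B_{m,i}$ through $B_{0,i}$, $B_{0,i}'$ and $h_m$. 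Then $[L_{-m,0},L_{m,i}]=m(i+2q)L_{0,i}$ yields an Euler-type ODE for $B_{0,i}$, analogous to \eqref{eq:g0i-differential} with $\alpha=0$, but with an inhomogeneous term coming from $A_{0,i}$ and $h_{\pm m}$. Its homogeneous solutions are monomials $t^k$ with $q^2k(k+1)=(i+2q)(i+q)$, that is, $k=1+in$. A particular solution should produce the $-mi$ and $\frac1nh_m(nt)$ contributions; the additivity $h_{m+k}=h_m+h_k$ of $\mathcal T_0$ will be used here. Hence
\[
B_{0,i}(t)=a^i t^{in}\tfrac1nh_0(nt)+b_i t^{1+in}, \qquad h_0=0.
\]

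\textbf{Step 3: pin down $b_i=iba^{i-1}$.} This is the main obstacle, since the $b_i$ are not forced by the $i=0$ relations. I would use $[L_{0,1},L_{0,j}]=(j-1)q\cdot 0$, or better $[L_{1,1},L_{0,j}]$ and $[L_{0,1},L_{1,j}]=(1+q)L_{1,j+1}$ applied to $1$, comparing coefficients of $t^{1+(j+1)n}$. The expected outcome is the recursion $b_{j+1}=ab_j+a^jb_1$. With $b_0=0$ this gives $b_i=ia^{i-1}b_1$, and we set $b=b_1$. When $a=0$ this recursion still yields $b_i=0$ for $i\ge2$, matching the convention $iba^{i-1}$ with $0^0=1$. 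Finally, inserting everything back into the formula of Step 2 gives the stated expression for all $m$, where $f_{m,i}$ for $m\neq0$ is recovered from $f_{0,i}$.
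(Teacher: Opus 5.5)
Your plan is correct, and Steps 1--2 are essentially the paper's argument. The paper takes $k=1$ in your relation $[L_{m,i},W_{k,0}]$. It then uses $[L_{0,i},L_{m,0}]$ to express $B_{m,i}$ through $B_{0,i}$, and gets the ODE from the $j=0$ case of $[L_{1,i},L_{-1,j}]$, which is your $[L_{-m,0},L_{m,i}]$ at $m=1$.

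A few corrections:
\begin{enumerate}
\item In Step 1, simply fix any $k\neq0$. This gives $f_{m,i}(t,s)-f_{m,i}(t,s-kq)=k(i+q)\lambda^m a^i t^{in}$ for all $m$, including $m=0$. Your alternatives break down there: $k=-m$ is vacuous for $m=0$, and $[L_{m,i},W_{m,i}]=0$ says nothing when $a=0$.
\item In Step 2, the first term of your display is missing a factor $m$.
\item More substantively, the ODE for $B_{0,i}$ is \emph{homogeneous}. Once $h_{\pm m}=\pm m h_1$ is inserted, all $h$-dependent and $m$-dependent terms cancel identically, leaving $t^2B_{0,i}''+2tB_{0,i}'=(1+in)(2+in)B_{0,i}$. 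The $\frac1n h_m(nt)-mi$ part of $f_{m,i}$ enters only through the formula for $B_{m,i}$ in terms of $B_{0,i}$, not through a particular solution. Your conclusion $B_{0,i}=b_it^{1+in}$ is nevertheless right.
\end{enumerate}

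The genuine difference is Step 3. The paper keeps $j$ general in $[L_{1,i},L_{-1,j}]$, which leads to \eqref{Bij3.27} and hence to $b_{i+j}=a^ib_j+a^jb_i$ in one stroke. You use $[L_{0,1},L_{1,j}]=(1+q)L_{1,j+1}$, or $[L_{1,1},L_{0,j}]=-(j+q)L_{1,j+1}$. This does give exactly $b_{j+1}=ab_j+a^jb_1$, for the following reason. Both $f_{0,1}(t,s)-f_{0,1}(t,s-q)=qa(1+n)t^n$ and $g_{1,j}$ are free of $b$'s, so the identity is linear in the $b$'s. The $b$-free parts, including any $h_1(nt)$-contributions to the coefficient of $t^{1+(j+1)n}$, coincide on both sides. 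Your recursion is the $i=1$ case of the paper's relation, and induction from $b_0=0$ gives $b_i=iba^{i-1}$, also when $a=0$.

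Your route is computationally lighter but needs the induction. The paper's route costs the heavier computation behind \eqref{Bij3.27} but yields the full Cauchy-type identity for all $i,j$ directly. Drop the bracket $[L_{0,1},L_{0,j}]$ you mention first: since $L_{0,i}$ acts as multiplication by $f_{0,i}$, it only gives a tautology.
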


\begin{proof}
By Lemma \ref{lem:positive-reciprocal-W-action}, we have
$
g_{m,i}(t)=\lambda^m a^i t^{1+in}$,
 where   $(m,i)\in\mathbb Z\times\mathbb N$  and $a\in\mathbb{C}.
$
According to
$
[L_{m,i},W_{1,0}]1=(i+\frac{1-m}{n})W_{m+1,i}1
$, we obtain
\begin{align*}
f_{m,i}(t,s)-f_{m,i}\left(t,s-\frac 1n\right)
=
\lambda^m\left(i+\frac1n\right)a^i t^{in}.
\end{align*}
It follows that $f_{m,i}(t,s)$ is linear in $s$, and its coefficient of
$s$ is
$
\lambda^{m}(1+in)a^i t^{in}.
$
Then we can write
\begin{align}\label{eq3.242}
f_{m,i}(t,s)=\lambda^ma^i (1+in)t^{in}s+B_{m,i}(t)
\end{align}
for some $B_{m,i}(t)\in\mathbb C[t]$ and $B_{0,0}(t)=0$. For simplicity, we denote
$B_{i}(t)=B_{0,i}(t)$.

For $m\neq0$, using
$
[L_{0,i},L_{m,0}]1=m\left(i+\frac1n\right)L_{m,i}1
$
  and Lemma~\ref{lem3.2}, we obtain
\begin{align}\label{eq:fmi-from-f0i}
f_{m,i}(t,s)
=\lambda^m\left(a^it^{in}
\left((1+in)s+\frac1n h_m(nt)-mi\right)+\frac{1}{1+in}tB_i^\prime(t)\right).
\end{align}
Comparing \eqref{eq3.242} and \eqref{eq:fmi-from-f0i}, we immediately get
\begin{align}\label{eq3.262}
    B_{m,i}(t)=\lambda^m\left(a^it^{in}
\left(\frac1n h_m(nt)-mi\right)+\frac{1}{1+in}tB_i^\prime(t)\right).
\end{align}
for $m\neq0$.
Substituting  \eqref{eq3.242}  into
$[L_{1,i},L_{-1,j}]1=-(i+j+\frac{2}{n})L_{0,i+j}1$, we deduce
\begin{align*}
&(n(i+j)+2)\left(a^{i+j}(1+(i+j)n)t^{(i+j)n}s+B_{i+j}(t)\right)
\\=&2a^{i+j}(1+in)(1+jn)t^{(i+j)n}s+\lambda^{-1}a^j(1+jn)t^{jn}B_{1,i}(t)
\\&+\lambda a^i(1+in)t^{in}B_{-1,j}(t)+\lambda a^it^{1+in}B_{-1,j}^\prime(t)
+\lambda^{-1} a^jt^{1+jn}B_{1,i}^\prime(t)
\\&
+a^{i+j}(1+jn)jnt^{(i+j)n}(s-\frac{1}{n})+a^{i+j}(1+in)int^{(i+j)n}(s+\frac{1}{n}).
\end{align*}
Comparing the constant terms with respect to $s$, we have
\begin{align*}
(n(i+j)+2)B_{i+j}(t)
 =&\lambda^{-1}a^j(1+jn)t^{jn}B_{1,i}(t)
+\lambda a^i(1+in)t^{in}B_{-1,j}(t)
\\&+\lambda a^it^{1+in}B_{-1,j}^\prime(t)
+\lambda^{-1} a^jt^{1+jn}B_{1,i}^\prime(t)
\\&
-\frac{1}{n}a^{i+j}(1+jn)jnt^{(i+j)n} +\frac{1}{n}a^{i+j}(1+in)int^{(i+j)n}.
\end{align*}
Inserting \eqref{eq3.262} into the above equation, we verify that
\begin{align}\label{Bij3.27}
&(ni+1)(nj+1)(n(i+j)+2)B_{i+j}(t)\\
={}&a^i(in+1)t^{in+1}\left((in+2)B^\prime_j(t)+tB^{\prime\prime}_j(t)\right)\nonumber\\
&+a^j(jn+1)t^{jn+1}\left((jn+2)B^\prime_i(t)+tB^{\prime\prime}_i(t)\right).\nonumber
\end{align}
Letting $j=0$ in  the above equation, one has
$
t^2B_i''(t)+2tB_i'(t)=(1+in)(2+in)B_i(t).
$

The polynomial solutions of this differential equation are
\begin{align}\label{Bi328}
   B_i(t)=b_it^{1+in}
\end{align}
for $b_i\in\mathbb{C}$ and $b_0=0$. Taking \eqref{Bi328} into \eqref{Bij3.27}, we have $a^ib_j+a^jb_i=b_{i+j}$. Setting $b=b_1$ gives
 $b_i=i b a^{i-1}$ for $i\in\mathbb Z_{+}$.
It follows that
$
f_{0,i}(t,s)=a^i t^{in}(1+in)s+i b a^{i-1}t^{1+in}.
$
Substituting \eqref{Bi328} into \eqref{eq:fmi-from-f0i}, we see that
\begin{align*}
f_{m,i}(t,s)
=
\lambda^m a^i t^{in}
\left((1+in)s+\frac1n h_m(nt)-mi\right)
+i b\lambda^m a^{i-1}t^{1+in}
\end{align*} 
for all $(m,i)\in\mathbb{Z}\times\mathbb{N}$.
\end{proof}

\begin{lemma}\label{lem:q-minus-one-L-action}
Suppose $q=-1$. Then
$
f_{m,1}(t,s)=\lambda^m\bigl(\beta_2+m\Theta_{\mathbf h,\beta_1}(t)\bigr),
f_{0,2}(t,s)=\gamma_2
$
and
$
f_{m,i}(t,s)=0
$
for all remaining $(m,i)\in\mathbb Z\times\mathbb Z_+$ with $i\ge2$.
\end{lemma}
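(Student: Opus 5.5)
The plan is to work throughout with Lemma~\ref{lem3.2}, using the values of the $g_{m,i}$ supplied by Lemma~\ref{lem:q-minus-one-W-vanishing}:
\begin{align*}
g_{m,1}=\lambda^m\beta_1,\qquad g_{0,2}=\gamma_1,\qquad g_{m,i}=0 \text{ otherwise } (i\ge1).
\end{align*}
By Lemma~\ref{lem3.2}, for $q=-1$ this gives
\begin{align*}
L_{m,i}r(t,s)=r(t,s+m)f_{m,i}(t,s)+m\,\frac{\partial}{\partial t}r(t,s+m)\,g_{m,i}.
\end{align*}
Since $g_{m,i}$ is constant whenever $i\ge1$, the correction term only contributes constants times $\partial_t r$. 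We then evaluate suitable brackets on $1$.

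\emph{Step 1 ($s$-independence).} For $i\ge1$, apply $[L_{m,i},W_{k,0}]1=\bigl(m-k(i-1)\bigr)W_{m+k,i}1$ with $k\ne0$. The left side equals
\begin{align*}
\lambda^k(t+k\alpha)\bigl(f_{m,i}(t,s)-f_{m,i}(t,s+k)\bigr)+m\lambda^kg_{m,i},
\end{align*}
and the right side is a constant. Choosing $k\ne0$ and comparing the terms divisible by $t+k\alpha$ shows $f_{m,i}(t,s)=f_{m,i}(t)$ for all $i\ge1$. For $i=1$ the constants cancel exactly, namely $m\lambda^{m+k}\beta_1$ on both sides.

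\emph{Step 2 (the case $i=1$).} First use $[L_{m,0},L_{0,1}]=0$, which holds because the structure constant is $0\cdot0-m\cdot0$. Applying it to $1$ gives $m\lambda^m(t+m\alpha)f_{0,1}'(t)=0$, hence $f_{0,1}=\beta_2$. Next apply $[L_{-m,0},L_{m,1}]1=-mL_{0,1}1=-m\beta_2$ for $m\ne0$. Expanding $L_{m,1}f_{-m,0}=L_{m,1}\lambda^{-m}(s-h_{-m}(-t))$ with the formula above produces a term $-m\lambda^{m-m}\beta_1\frac{d}{dt}h_{-m}(-t)$. The resulting equation is a first-order linear ODE
\begin{align*}
f_{m,1}(t)+(t-m\alpha)f_{m,1}'(t)=\lambda^m\bigl(\beta_2+(\text{explicit term from }\beta_1h_{-m}'(-t))\bigr).
\end{align*}
I would then check that $\lambda^m(\beta_2+m\Theta_{\mathbf h,\beta_1}(t))$ is a solution, using \eqref{eq:hm-h1-relation} and the identity \eqref{eq:theta-hm-derivative} (with $m$ replaced by $-m$). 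Uniqueness of the polynomial solution holds because the homogeneous equation $(\,(t-m\alpha)u)'=0$ has no nonzero polynomial solution. This sign bookkeeping with $h_{-m}$ and $\Theta$ is where I expect the main computational care to be needed.

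\emph{Step 3 (the case $i\ge2$).} For $i=2$, argue as in Lemma~\ref{lem:ordinary-L-vanishing}. From $[L_{0,2},L_{m,0}]1=mL_{m,2}1$ and $g_{m,2}=0$ for $m\ne0$ we get $f_{m,2}=-\lambda^m(t+m\alpha)f_{0,2}'$. Then $[L_{-m,0},L_{m,2}]$ has structure constant $m(i+2q)=0$, which yields
\begin{align*}
2tf_{0,2}'+(t^2-m^2\alpha^2)f_{0,2}''=0,
\end{align*}
so $f_{0,2}=\gamma_2$ and $f_{m,2}=0$ for $m\ne0$. Here the extra $\gamma_1$-term from $g_{0,2}$ only appears with $m=0$ and so drops out. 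For $i\ge3$ the degree equation $(i-1)(i-2)=N(N+1)$ admits $N=i-2$, so the differential equation alone does not force vanishing, and we instead generate by induction on $i$. For $m\ne0$, the relation $[L_{0,1},L_{m,i-1}]=m(i-2)L_{m,i}$, with $f_{m,i-1}=0$ and $g_{m,i-1}=0$, gives $f_{m,i}=0$ once the previous level is known for $m\ne0$. For the base case $i=3$ we use $f_{m,2}=0$ for $m\ne0$. Finally, $[L_{1,0},L_{-1,i}]1=(i-3)\cdot(\pm1)L_{0,i}1$ has a nonzero constant for $i\ge4$ and gives $f_{0,i}=0$. For $i=3$ I would use instead $[L_{1,1},L_{-1,2}]$, whose constant is $-1\cdot1-1\cdot1=-2\ne0$. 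Since $f_{-1,2}=g_{-1,2}=0$ and $f_{1,1}$ is $s$-free, this gives $f_{0,3}=0$.
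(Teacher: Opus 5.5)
Your Steps 1 and 2 and the case $i=2$ are sound. The uniform $s$-independence argument via $[L_{m,i},W_{k,0}]1$ works for every $i\ge1$; the structure constant there is actually $m+k(i-1)$, but you only use that the right side is a constant. The genuine gap is in Step 3 for $i\ge3$: the relation $[L_{0,1},L_{m,i-1}]=m(i-2)L_{m,i}$ is false. By Definition~\ref{de3.1} with $q=-1$,
\[
[L_{0,1},L_{n,j}]=\bigl(n(1+q)-0\cdot(j+q)\bigr)L_{n,j+1}=0 ,
\]
so $L_{0,1}$ is central in $\mathcal B(-1)$. The paper notes that $L_{0,-q}$ is central whenever $-q\in\mathbb N$, and consistently $L_{0,1}$ ends up acting by the scalar $\beta_2$. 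This bracket therefore gives no information, and your induction does not prove $f_{m,i}=0$ for $m\ne0$, $i\ge3$.

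The derivation of $f_{0,i}=0$ for $i\ge4$ via $[L_{1,0},L_{-1,i}]$ collapses with it, because it presupposes that $L_{-1,i}$ is the zero operator. Only $f_{0,3}=0$, obtained from $[L_{1,1},L_{-1,2}]$, survives; its constant is $-1$, not $-2$, but it is still nonzero.

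The fix is to bracket with $L_{1,1}$ (or any $L_{k,1}$ with $k\ne0$) instead, which is what the paper does:
\[
[L_{m,i},L_{1,1}]=(i-1)L_{m+1,i+1}, \qquad i-1\ne0 \text{ for } i\ge2 .
\]
At level $i=2$, each $L_{m,2}$ is one of two operators. For $m\ne0$ it is the zero operator, since $f_{m,2}=g_{m,2}=0$. For $m=0$ it is $\gamma_2\,\mathrm{Id}$, since the correction term in Lemma~\ref{lem3.2} carries the factor $m$. In both cases $[L_{m,2},L_{1,1}]1=0$, so $f_{m+1,3}=0$ for every $m\in\mathbb Z$, including index $0$. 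Together with $g_{m,3}=0$, this makes every $L_{m,3}$ the zero operator. Induction on $i$ then gives $f_{m,i}=0$ for all $m$ and all $i\ge3$ at once, with no separate treatment of $f_{0,i}$.

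Incidentally, $[L_{1,0},L_{-1,i}]=(2-i)L_{0,i}$, not $(i-3)(\pm1)L_{0,i}$. So even on your route, $i=3$ would not have needed special handling.
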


\begin{proof}
By Lemma~\ref{lem:q-minus-one-W-vanishing}, we know that
$
g_{m,1}(t)=\lambda^m\beta_1,
g_{0,2}(t)=\gamma_1$  and $
g_{m,2}(t)=0 \ (m\neq0)$.
For nonzero $m$, applying $[L_{0,1},L_{m,0}]1=0$ and Lemma~\ref{lem3.2},
  we have
  \begin{align}\label{eq3.25}
      \big(s-h_m(-t)\big)\big(f_{0,1}(t,s+m)-f_{0,1}(t,s)\big)=-m(t+m\alpha)\frac{\partial}{\partial t}f_{0,1}(t,s+m).
  \end{align}
Write
  $
  f_{0,1}(t,s)=\sum_{k=0}^pA_k(t)s^k
  $
  with $A_p(t)\neq0$.
  Comparing the coefficient of $s^p$ in \eqref{eq3.25}, we obtain
$ pmA_p(t)=-m(t+m\alpha)A_p'(t)$
for all $m\in\mathbb{Z}$. This gives
  $
  (t+m\alpha)A_p'(t)+pA_p(t)=0,
 $
  which has no nonzero polynomial solution for $p>0$. Therefore,
$
f_{0,1}(t,s)=f_{0,1}(t).
$
Using \eqref{eq3.25} again gives
$
f^\prime_{0,1}(t)=0,
$
namely,
$f_{0,1}(t)=\beta_2\in\mathbb C.$

For any  $m,n\in\mathbb{Z}$, according to
$[L_{m,1},W_{n,0}]1=mW_{m+n,1}1$,
we deduce that
$(t+n\alpha)\big(f_{m,1}(t,s)-f_{m,1}(t,s+n)\big)=0,$ which forces $f_{m,1}(t,s)=f_{m,1}(t)$.
Set $\Theta=\Theta_{\mathbf h,\beta_1}$, and a prime denoting differentiation with respect to $t$. By   \eqref{eq:hm-h1-relation}, we check
\begin{align}\label{eq:hm-theta-derivative}
\beta_1h^\prime_{-m}(-t)
 =&-m\beta_1h^\prime_1(-t)+m(-m-1)\alpha\beta_1\Big(\frac{h_1(-t)-h_1(\alpha)}{-t-\alpha}\Big)^\prime
 \\=&\nonumber -m(m+1)\alpha\Theta^\prime(t)+m\Theta(t)+(t+\alpha)m\Theta^\prime(t)
\\=&m\bigl((t-m\alpha)\Theta'(t)+\Theta(t)\bigr).\nonumber
\end{align}
For $m\neq0$, from
$[L_{-m,0},L_{m,1}]1=-mL_{0,1}1$, we obtain
$
\lambda^{m}\beta_2
=f_{m,1}(t)+(t-m\alpha)f^\prime_{m,1}(t)-\lambda^{m}\beta_1 h^\prime_{-m}(-t).$
From  \eqref{eq:hm-theta-derivative}, we obtain
\begin{equation}\label{eq:fm1-differential}
f_{m,1}(t)+(t-m\alpha)f'_{m,1}(t)
=\lambda^m\bigl(\beta_2+m\Theta(t)+m(t-m\alpha)\Theta'(t)\bigr).
\end{equation}
Let
$
F(t)=f_{m,1}(t)-\lambda^m\bigl(\beta_2+m\Theta(t)\bigr).
$
It follows from  \eqref{eq:fm1-differential} that
$F(t)+(t-m\alpha)F'(t)=0$. Its only polynomial solution is zero. Therefore
\begin{align*}
f_{m,1}(t,s)
    =\lambda^m\bigl(\beta_2+m\Theta(t)\bigr),\quad \forall m\in\mathbb{Z}.
\end{align*}
It remains to consider $i\ge2$. For $m+n\neq0$, the relation $[L_{m,2},W_{n,0}]1=0$ gives $f_{m,2}(t,s)=f_{m,2}(t)$ for all $m\in\mathbb{Z}$. From
$[L_{m,2},L_{-m,0}]1=0$,
we obtain
$
f_{m,2}(t,s)=0$
for  $m\neq0$.
Moreover, since $[L_{m,0},L_{0,2}]1=0$ for
$m\neq0$, we obtain
$
m\lambda^m(t+m\alpha)f'_{0,2}(t)=0.
$
Thus $f_{0,2}(t)=\gamma_2\in\mathbb C$.
Finally, applying
$
[L_{m,i},L_{1,1}]1
=
(i-1)L_{m+1,i+1}1,
$
we conclude that
$f_{m,i}(t,s)=0$
for all $(m,i)$ with $i\ge3$. This proves the lemma.
\end{proof}

\subsubsection{Main theorem}
In this subsection, we shall show the main results of the present  paper.
 \begin{theorem}\label{thm3.4}
Let $M\in\operatorname{Ob}(\mathcal F_1(\mathcal B(q)))$. Then $M$ is isomorphic to one of the following
modules:
\begin{align*}
M\cong
\begin{cases}
\Omega(\lambda,\alpha,\mathbf h(\alpha)),
& \text{if } -2q\notin\mathbb N,\\
\Omega_n(\lambda,a,b,\mathbf h(0)),
& \text{if } q=\dfrac1n \text{ for some } n\in\mathbb Z_{+},\\
\Omega(\lambda,\alpha,\gamma,\mathbf h(\alpha)),
& \text{if } -2q\in\mathbb N \text{ and } q\neq -1,\\
\Omega(\lambda,\alpha,\beta,\gamma,\mathbf h(\alpha)),
& \text{if } q=-1,
\end{cases}
\end{align*}
where $\lambda\in\mathbb C^*$, $\alpha,a,b\in\mathbb C$,
$\beta=(\beta_1,\beta_2)\in\mathbb C^2$,
$\gamma=(\gamma_1,\gamma_2)\in\mathbb C^2$, $\mathbf h(\alpha)\in\mathcal T_\alpha$.
\end{theorem}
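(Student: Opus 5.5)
The plan is to assemble the lemmas of this subsection into a case analysis on $q$. Throughout, $M\in\operatorname{Ob}(\mathcal F_1(\mathcal B(q)))$ is identified with $\mathbb C[t,s]$ via the generator $1$.

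First, restricting $M$ to $\mathcal W$ and applying Theorem~\ref{th2.3} yields the parameters $\lambda\in\mathbb C^*$, $\alpha\in\mathbb C$ and $\mathbf h(\alpha)\in\mathcal T_\alpha$. These give the formulas \eqref{action} for $L_{m,0}$ and $W_{m,0}$. Next, Lemma~\ref{lem3.2} shows that the whole action is determined by the polynomials $g_{m,i}=W_{m,i}1$ and $f_{m,i}=L_{m,i}1$ of \eqref{WL3.15}. Indeed, once these are known, \eqref{3.2}--\eqref{3.3} reproduce the operators on arbitrary $f(t,s)$. So in each case it suffices to check that the $g_{m,i},f_{m,i}$ obtained match the values $W_{m,i}1$, $L_{m,i}1$ of the listed module. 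The identity map $\mathbb C[t,s]\to\mathbb C[t,s]$ is then an isomorphism, because both modules are given by the same formulas of Lemma~\ref{lem3.2}.

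The cases are handled as follows.

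\textbf{Case $q\neq-1$, $q\ne\frac1n$, $-2q\notin\mathbb N$.} Lemmas~\ref{lem:ordinary-W-vanishing} and \ref{lem:ordinary-L-vanishing} give $g_{m,i}=f_{m,i}=0$ for $i\ge1$, since the exceptional index $(0,-2q)$ does not occur. Substituting into \eqref{3.2}--\eqref{3.3} reproduces $\Omega(\lambda,\alpha,\mathbf h)$.

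\textbf{Case $-2q\in\mathbb N$, $q\ne-1$.} The same lemmas, together with Lemma~\ref{lem:basic-W-equations}, give $g_{0,-2q}=\gamma_1$, $f_{0,-2q}=\gamma_2$, and vanishing otherwise. This is $\Omega(\lambda,\alpha,\gamma,\mathbf h)$.

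\textbf{Case $q=-1$.} Lemmas~\ref{lem:q-minus-one-W-vanishing} and \ref{lem:q-minus-one-L-action} give exactly the values $W_{m,1}1=\lambda^m\beta_1$, $L_{m,1}1=\lambda^m(\beta_2+m\Theta_{\mathbf h,\beta_1})$, $W_{0,2}1=\gamma_1$, $L_{0,2}1=\gamma_2$, with all other values zero. Plugging these into \eqref{3.3} yields the term $m\beta_1\lambda^m\partial_t f(t,s+m)$, which matches \eqref{L3.2}.

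\textbf{Case $q=\frac1n$.} If $\alpha\neq0$, Lemmas~\ref{lem:positive-reciprocal-W-action} and \ref{lem:ordinary-L-vanishing} give the ordinary module $\Omega(\lambda,\alpha,\mathbf h)$; this is consistent with the first line of the theorem, since $-2q\notin\mathbb N$. If $\alpha=0$, Lemmas~\ref{lem:positive-reciprocal-W-action} and \ref{lem:positive-reciprocal-L-action} give $g_{m,i}=\lambda^ma^it^{1+in}$ and the stated $f_{m,i}$. Inserting these into \eqref{3.3} produces \eqref{eq:Omega-n-L}, in particular the term $-\frac mn\lambda^ma^it^{1+in}\partial_t f(t,s-\frac mn)$. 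Hence $M\cong\Omega_n(\lambda,a,b,\mathbf h(0))$. Note that $a=0$, $b$ arbitrary also lies in this family, and that for $a=b=0$ it reduces to $\Omega(\lambda,0,\mathbf h(0))$.

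Essentially all the work is contained in the preceding lemmas, so the only real obstacle is bookkeeping. One must confirm that the cases of the statement are covered consistently. The cases $q=\frac1n$ and $-2q\notin\mathbb N$ overlap, and this is resolved by the $\alpha$-dichotomy above. One must also check that $q=-\frac12$ falls under ``$-2q\in\mathbb N$, $q\ne-1$'', using Case~2 of Lemma~\ref{lem:ordinary-W-vanishing}. Finally, one must verify that the substitutions into \eqref{3.3} give precisely the defining formulas, which is a direct comparison.
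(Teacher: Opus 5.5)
Your proposal is correct and follows the paper's proof. Like the paper, you restrict to $\mathcal W$ and then split into the cases $q\neq-1,\frac1n$ (with $-2q\in\mathbb N$ or not), $q=\frac1n$ (resolved by the dichotomy $\alpha\neq0$ versus $\alpha=0$), and $q=-1$, reading off $W_{m,i}1$ and $L_{m,i}1$ from the corresponding lemmas. Your additional remarks — that Lemma~\ref{lem3.2} makes the identity map $\mathbb C[t,s]\to\mathbb C[t,s]$ an isomorphism once the values on $1$ agree, and that the overlap between the first two lines of the statement at $q=\frac1n$ is handled by $\alpha$ — only make explicit what the paper's proof leaves implicit.
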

\begin{proof}
Restricting $M$ to the subalgebra $\mathcal W$, \eqref{action}
gives the actions of $L_{m,0}$, $W_{m,0}$, $C_1$, and $C_2$.

If $q\neq -1$ and $q\neq \frac1n$, Lemmas
\ref{lem:ordinary-W-vanishing} and \ref{lem:ordinary-L-vanishing}
show that only the possible scalar actions $W_{0,-2q}$ and
$L_{0,-2q}$ remain. Hence $M\cong \Omega(\lambda,\alpha,\mathbf h(\alpha))$
if $-2q\notin\mathbb N$, and
$M\cong\Omega(\lambda,\alpha,\gamma,\mathbf h(\alpha))$ if $-2q\in\mathbb N$.

If $q=\frac1n$, then Lemma~\ref{lem:positive-reciprocal-W-action}
shows that for $\alpha\neq0$ the higher $W$-actions vanish, so
Lemma~\ref{lem:ordinary-L-vanishing} applies. For $\alpha=0$,
Lemmas~\ref{lem:positive-reciprocal-W-action} and
\ref{lem:positive-reciprocal-L-action} give
$M\cong\Omega_n(\lambda,a,b,\mathbf h(0))$.

If $q=-1$, Lemmas~\ref{lem:q-minus-one-W-vanishing} and
\ref{lem:q-minus-one-L-action} give exactly the actions defining
$\Omega\big(\lambda,\alpha,\beta,\gamma,\mathbf h(\alpha)\big)$.

This completes the proof.
\end{proof}
\section{Tensor product modules of $\mathcal{B}(q)$}
Since $t\mathbb C[t,s]$ is a proper submodule of
$\Omega_n(\lambda,a,b,\mathbf h(0))$, this module is not simple.
We therefore consider only tensor products of the simple rank-one
free modules obtained in Theorem~\ref{thm3.4}.

We first recall the definition of  restricted modules (also called smooth modules).
\begin{definition}
Let $\mathcal{L}=\bigoplus_{m\in\mathbb{Z},\,i\in\mathbb{N}}\mathcal{L}_{m,i}$ be an arbitrary $\mathbb{Z}$‑graded Lie algebra.
An $\mathcal{L}$‑module $V$ is called a restricted module if for any $v\in V$ there exists $n\in\mathbb{N}$ such that $\mathcal{L}_{m,i}v=0$ for $m>n$ and any $i\in\mathbb{N}$.
\end{definition}
For a simple restricted $\mathcal{B}(q)$-module $V$, we denote the tensor product
\begin{align*}
T=
\begin{cases}
\bigotimes_{i=1}^{m}\Omega(\lambda_{i},\alpha_{i},\mathbf{h}_{i}(\alpha_i))\otimes V, & -2q\notin \mathbb{N},\\
\bigotimes_{i=1}^{m}\Omega(\lambda_{i},\alpha_{i},\gamma_{i},\mathbf{h}_{i}(\alpha_i))\otimes V, & -2q \in \mathbb{N} \,\text{and} \,q\neq -1,\\
\bigotimes_{i=1}^{m}\Omega(\lambda_{i},\alpha_{i},\beta_{i},\gamma_{i},\mathbf{h}_{i}(\alpha_i))\otimes V, & q=-1,
\end{cases}
\end{align*}
where
$\beta_i=(\beta_{1,i},\beta_{2,i})$ and
$\gamma_i=(\gamma_{1,i},\gamma_{2,i})$. Fix this notation in this section. Now we consider the irreducibility of the tensor product module $T$.

We define a partial order on $\mathbb{Z}^{n}$:
\begin{align*}
(x_{1},x_{2},\ldots,x_{n})<(y_{1},y_{2},\ldots,y_{n})
&\iff \text{there exists }1\leq r\leq n\text{ such that}\\
&\qquad x_{i}=y_{i}\text{ for }i<r\text{ and }x_{r}<y_{r}.
\end{align*}
Each nonzero element $f\in T$ can be uniquely written in the form
\begin{align*}
    f=\sum_{(\mathbf r,\mathbf p)\in S}\mathbf t^{\mathbf r}\mathbf s^{\mathbf p}\otimes v_{\mathbf r,\mathbf p},
\end{align*}
where $S$ is a finite subset of $\mathbb{N}^{2m}$ and $v_{\mathbf r,\mathbf p}$ are nonzero elements in $V$. Define the degree of $f$ in $\mathbf s$ by $\deg_{\mathbf s}(f)=\mathbf p=(p_{1},\ldots,p_{m})$ and its degree in $\mathbf t$ by $\deg_{\mathbf t}(f)=\mathbf r=(r_{1},\ldots,r_{m})$, where $\mathbf s^{\mathbf p}$ and $\mathbf t^{\mathbf r}$ are maximal with respect to this order among the monomials appearing in the sum.

We use the following technical result.
\begin{proposition}\label{prop4.1}
   Let $P$ be a vector space over $\mathbb{C}$ and $P_{1}$ a subspace of $P$. Suppose that $\lambda_{1},\lambda_{2},...,\lambda_{s}\in\mathbb{C}^{*}$ are pairwise distinct, $v_{ij}\in P$ and $f_{i,j}(t)\in \mathbb{C}[t]$ with $\deg f_{ij}(t)=j$ for $i=1,2,...,s;\,j=0,1,2,...,k$. If
   \begin{align*}
       \sum_{i=1}^{s}\sum_{j=0}^{k}\lambda_{i}^{m}f_{ij}(m)v_{ij}\in P_{1} \;\text{for all }m\in\mathbb{Z}\text{ with }m>K.
   \end{align*}
   Then $v_{ij}\in P_{1}$ for all $i,j$.
\end{proposition}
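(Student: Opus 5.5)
The plan is to reduce to a Vandermonde-type linear independence statement for the sequences $m\mapsto\lambda_i^m m^j$, using only finitely many values of $m$. Let $\pi:P\to P/P_1$ be the quotient map and put $w_{ij}=\pi(v_{ij})$. The hypothesis then says
\[
\sum_{i=1}^{s}\sum_{j=0}^{k}\lambda_i^m f_{ij}(m)\,w_{ij}=0\qquad\text{for all } m>K,
\]
and we must show every $w_{ij}=0$. Only finitely many vectors appear, so we may replace $P/P_1$ by the finite-dimensional span of the $w_{ij}$. Applying an arbitrary linear functional $\phi$ and writing $c_{ij}=\phi(w_{ij})$ reduces the problem to a scalar statement: if $\sum_{i,j}c_{ij}\lambda_i^m f_{ij}(m)=0$ for all $m>K$, then all $c_{ij}=0$. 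Since the functionals separate points, this gives $w_{ij}=0$, i.e. $v_{ij}\in P_1$.

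For the scalar statement, first use that $\deg f_{ij}=j$ exactly. By a triangular change of basis, for each fixed $i$ the span of $f_{i0}(m),\dots,f_{ik}(m)$ equals the span of $1,m,\dots,m^k$. So it suffices to prove that the $s(k+1)$ functions $m\mapsto\lambda_i^m m^j$ are linearly independent on the integers $m>K$. The coefficients can then be recovered back through the invertible triangular matrix: the leading coefficients of $f_{ij}$ are nonzero, so vanishing of the transformed coefficients forces $c_{ij}=0$, proceeding downward in $j$.

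The main step is this independence of quasi-polynomial exponentials, which I would prove in one of two ways. The first is the generating-function argument: the sequence $a_m=\sum_{i,j}d_{ij}\lambda_i^m m^j$ has rational generating function $\sum_{m>K}a_m x^m$ with partial fraction poles of order up to $j+1$ at $x=\lambda_i^{-1}$. These poles are distinct because the $\lambda_i$ are pairwise distinct and nonzero, so uniqueness of partial fractions forces all $d_{ij}=0$. The second is the operator argument: apply the shift operator $E:a_m\mapsto a_{m+1}$ and note that $(E-\lambda_i)^{k+1}$ annihilates the $i$-th block. Applying $\prod_{i'\ne i}(E-\lambda_{i'})^{k+1}(E-\lambda_i)^{k}$ isolates $d_{ik}$ times a nonzero constant $k!\lambda_i^k\prod_{i'\ne i}(\lambda_i-\lambda_{i'})^{k+1}$ times $\lambda_i^m$, so $d_{ik}=0$; then induct downward on $k$.

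I expect the only delicate point to be bookkeeping rather than conceptual. The shift operator must keep us inside the range $m>K$, which it does since $E$ only increases $m$. And $\lambda_i\neq0$ is needed for the nonvanishing of the isolated constant, which holds because the $\lambda_i$ lie in $\mathbb C^*$ by hypothesis.
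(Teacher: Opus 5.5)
The paper does not prove this proposition. It is quoted without proof as a standard technical lemma from the tensor-product literature, so there is no argument of the authors to compare with. Judged on its own, your proof is correct and complete.

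Each step checks out:
the passage to $P/P_1$ and then to scalars via linear functionals on the finite-dimensional span of the $w_{ij}$ is legitimate;
the triangular change from the $f_{ij}$ to monomials works because $\deg f_{ij}=j$ exactly;
the shift-operator computation is right. For $i'\neq i$, $E-\lambda_{i'}$ preserves the degree of $p$ in $\lambda_i^m p(m)$ and multiplies its leading coefficient by $\lambda_i-\lambda_{i'}$. Meanwhile $E-\lambda_i$ lowers the degree by one and multiplies the leading coefficient by $\lambda_i\deg p$. Together these give exactly your constant $k!\,\lambda_i^k\prod_{i'\neq i}(\lambda_i-\lambda_{i'})^{k+1}$.

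Since $E$ only looks forward, the argument actually uses just $s(k+1)$ consecutive values $m>K$. This is the same information a confluent Vandermonde determinant argument would use.

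Two small remarks on the write-up. In the generating-function variant, ``poles of order up to $j+1$'' does not separate the $d_{ij}$ with the same $i$. You need the pole of $\sum_m\lambda_i^m m^j x^m$ at $x=\lambda_i^{-1}$ to have order exactly $j+1$, which is true, with leading principal-part coefficient $j!$. Also, if $K<0$ you should restrict to $m>\max(K,0)$ so that you stay with power series.

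The shift-operator version avoids both points and is the one to write up. There you can even skip the triangular change of basis. The operator $\prod_{i'\neq i}(E-\lambda_{i'})^{k+1}(E-\lambda_i)^k$ kills $\lambda_i^m f_{ij}(m)$ for $j<k$. It sends $\lambda_i^m f_{ik}(m)$ to the leading coefficient of $f_{ik}$ times your constant times $\lambda_i^m$.
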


\subsection{Simplicity}

\begin{theorem}\label{thm4.2}
    Suppose that $\lambda_{1},...,\lambda_{m}$ are pairwise distinct and $V$ is irreducible. Then $1\otimes v$ generates the module $T$ for any nonzero element $v\in V$.
\end{theorem}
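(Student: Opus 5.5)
Let $N$ be the submodule of $T$ generated by $1\otimes v$ (here $1=1\otimes\cdots\otimes 1$ in the polynomial factors). The aim is to show $\mathbf t^{\mathbf r}\mathbf s^{\mathbf p}\otimes w\in N$ for all $\mathbf r,\mathbf p\in\mathbb N^m$ and all $w\in V$. We follow the Tan–Zhao scheme~\cite{TZ13}. The degree-zero operators alone will not do the job, since $L_{0,0}$ and $W_{0,0}$ act on $V$ as well. Instead we use the generators $W_{k,0}$ and $L_{k,0}$ with $k$ large, together with Proposition~\ref{prop4.1}.

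First, fix $w\in V$ and choose $K$ such that $\mathcal B(q)_{k,i}w=0$ for all $k>K$ and all $i$; this is possible because $V$ is restricted. For $k>K$ and a pure tensor $F=f_1\otimes\cdots\otimes f_m\otimes w$, only the polynomial factors feel $X_{k,0}$. By \eqref{action} we get
\begin{align*}
W_{k,0}F=\sum_{j=1}^m\lambda_j^k\,f_1\otimes\cdots\otimes (t_j-kq\alpha_j)f_j(t_j,s_j-kq)\otimes\cdots\otimes w .
\end{align*}
There is an analogous formula for $L_{k,0}$. Expanding $f_j(t_j,s_j-kq)$ and $h_k^{(j)}$ in powers of $k$ presents $W_{k,0}F$ and $L_{k,0}F$ in the form $\sum_j\sum_l\lambda_j^k\,p_{jl}(k)\,u_{jl}$ with polynomials $p_{jl}$. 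We choose the basis $\{1,k,k^2,\dots\}$ so that $\deg p_{jl}=l$. Since the $\lambda_j$ are pairwise distinct, Proposition~\ref{prop4.1} then shows that every coefficient vector $u_{jl}$ lies in $N$ whenever $F\in N$. Applied to $W_{k,0}$, the $k^0$-coefficient in the $j$-th slot yields $t_jF\in N$. Applied to $L_{k,0}$, the $k^0$-coefficient yields $(s_j+c_j)F\in N$ for some known $c_j\in\mathbb C[t_j]$; for $h_k$ we use the explicit expansion $h_k(t)=kh_1(t)-k(k-1)\alpha(\cdots)$. Hence $s_jF\in N$. Iterating from $F=1\otimes v$ gives $\mathbb C[\mathbf t,\mathbf s]\otimes v\subseteq N$.

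Second, we pass from $v$ to all of $V$. Take $x\in\mathcal B(q)$ homogeneous. For $g\in\mathbb C[\mathbf t,\mathbf s]$,
\[
x(g\otimes v)=\sum_j (x\text{ on slot }j)\,g\otimes v+g\otimes xv .
\]
The first sum lies in $\mathbb C[\mathbf t,\mathbf s]\otimes v\subseteq N$ by the previous step. Therefore $g\otimes xv\in N$, and since the step-one argument works for $g=1$, we get $\mathbb C[\mathbf t,\mathbf s]\otimes xv\subseteq N$. By irreducibility $U(\mathcal B(q))v=V$, so induction on the length of monomials in $U(\mathcal B(q))$ gives $\mathbb C[\mathbf t,\mathbf s]\otimes V\subseteq N$. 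This is all of $T$.

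The main obstacle is the bookkeeping in the first step. When the coefficients in $k$ are extracted, the derivative term $-k\lambda^k q(t-kq\alpha)\partial_t$ and the shifts $s-kq$ mix several $k$-powers. One must check that the chosen coefficient is exactly $t_jF$ or $(s_j+c_j)F$. In particular the $k^0$ term has to be isolated cleanly, and for this one needs $\deg p_{jl}=l$ to match the hypothesis of Proposition~\ref{prop4.1}. For $q=-1$ the extra $L_{k,1}$ and $W_{k,1}$ actions are not needed, because the $i=0$ generators already suffice. The only care required is that $V$ is annihilated by $\mathcal B(q)_{k,i}$ for large $k$ uniformly in $i$, which the definition of restricted module guarantees.
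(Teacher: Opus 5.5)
Your proposal is correct and follows essentially the same route as the paper. For $k$ large, Proposition~\ref{prop4.1} applied to $W_{k,0}$ and $L_{k,0}$ extracts the $k^0$-coefficients $t_jF$ and $s_jF$ (in fact $c_j=0$, since $h_0=0$), which gives $\mathbb C[\mathbf t,\mathbf s]\otimes v\subseteq N$. Your second step is the paper's argument that $\{w\in V\mid \mathbb C[\mathbf t,\mathbf s]\otimes w\subseteq N\}$ is a nonzero submodule of the irreducible module $V$, phrased instead as an induction on monomials in $U(\mathcal B(q))$.
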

\begin{proof}
    Fix any nonzero $v\in V$. Let $X$ be the submodule of $T$ generated by $1\otimes v$. Suppose $t^{\mathbf{r}}s^{\mathbf{p}}\otimes v\in X$. Since $V$ is a restricted module, there exists a positive integer $K$ such that $W_{k,0}v=L_{k,0}v=0$ for all $k>K$. Then we have
    \begin{align*}
    W_{k,0}(\mathbf t^{\mathbf r}\mathbf s^{\mathbf p}\otimes v)
=\sum_{i=1}^m\Big(\lambda_i^k(t_i-kq\alpha_i)t_i^{r_i}(s_i-kq)^{p_i}
\prod_{\ell\ne i}t_\ell^{r_\ell}s_\ell^{p_\ell}\Big)\otimes v.\\
    \end{align*}
By Proposition \ref{prop4.1}, we obtain $t_i\mathbf t^{\mathbf r}\mathbf s^{\mathbf p}\otimes v\in X$ for all $1\leq i\leq m$. Similarly, by
\begin{align*}
        L_{k,0}(\mathbf t^{\mathbf r}\mathbf s^{\mathbf p}\otimes v)
=&\sum_{i=1}^m\Big(\Big(\lambda_i^k
\bigl(s_i+qh_{i,k}(q^{-1}t_i)\bigr)t_i^{r_i}(s_i-kq)^{p_i}\\
&-\lambda_i^kkqr_i
(t_i-kq\alpha_i)t_i^{r_i-1}(s_i-kq)^{p_i}\Big)\cdot\prod_{\ell\ne i}t_\ell^{r_\ell}s_\ell^{p_\ell}\Big)\otimes v,
    \end{align*}
    $s_i\mathbf t^{\mathbf r}\mathbf s^{\mathbf p}\otimes v\in X$ for all $i$. Therefore $\mathbb{C}[\mathbf{t},\mathbf{s}]\otimes v\subseteq X$.

    Now denote $V'=\{w\in V\mid\mathbb{C}[\mathbf{t},\mathbf{s}]\otimes w\subseteq X\}$. The previous argument shows $V'\neq 0$. For any $w\in V'$, $x\in \mathcal{B}(q)$, we have
    \begin{align*}
        x(f(\mathbf{t},\mathbf{s})\otimes w)=xf(\mathbf{t},\mathbf{s})\otimes w+f(\mathbf{t},\mathbf{s})\otimes xw\in X,
    \end{align*}
which implies $f(\mathbf{t},\mathbf{s})\otimes xw\in X$, and $xw\in V'$. Therefore $V'$ is a submodule of $V$. Since $V$ is irreducible, $V'=V$, and hence $X=T$.
\end{proof}
\begin{theorem}\label{thm4.3}
    Suppose that $\lambda_{1},\ldots,\lambda_{m}$ are pairwise distinct, $V$ is an irreducible restricted module, for each $i$, either $\alpha_i\neq0$, or $q=-1$ and
$\beta_{1,i}\neq0$. Then $T$ is an irreducible $\mathcal{B}(q)$-module.
\end{theorem}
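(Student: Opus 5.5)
By Theorem~\ref{thm4.2}, it suffices to show that every nonzero submodule $N$ of $T$ contains some element $1\otimes v$ with $v\neq0$. The plan is the standard degree-reduction argument of \cite{TZ13}, adapted to the two variables $t_i,s_i$ in each factor.

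Take a nonzero $f\in N$. Write it as $f=\sum_{(\mathbf r,\mathbf p)\in S}\mathbf t^{\mathbf r}\mathbf s^{\mathbf p}\otimes v_{\mathbf r,\mathbf p}$ and choose $f$ with $\deg_{\mathbf s}(f)$ minimal, and then $\deg_{\mathbf t}(f)$ minimal, in the order defined above. Since $V$ is restricted, there is $K$ such that $L_{k,i}v_{\mathbf r,\mathbf p}=W_{k,i}v_{\mathbf r,\mathbf p}=0$ for all $k>K$, all $i$, and all $(\mathbf r,\mathbf p)\in S$. For $k>K$, the action of $W_{k,0}$ and $L_{k,0}$ (and, when $q=-1$, of $W_{k,1}$ and $L_{k,1}$) on $f$ therefore only touches the polynomial part.

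By \eqref{action}, the polynomial part of $W_{k,0}f$ has the form $\sum_i\lambda_i^k\,(\text{polynomial in }k)$. Since the $\lambda_i$ are pairwise distinct, Proposition~\ref{prop4.1} isolates each $i$ and each power of $k$. We treat the two cases of the hypothesis separately.

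\textbf{Case $\alpha_i\neq0$.} Here
\[
(t_i-kq\alpha_i)\,f(\dots,s_i-kq,\dots)
\]
is expanded in powers of $k$. The top power of $k$ carries the coefficient $\alpha_i$ times the leading $s_i$-part. Extracting the appropriate $k$-coefficient produces an element of $N$ of strictly smaller $\mathbf s$-degree in the $i$-th slot, contradicting minimality unless $p_i=0$. Concretely: if $p_i>0$, the coefficient of $k^{p_i+1}$ is $\alpha_i(-q)^{p_i}$ times the leading part. This gives an element of $N$ with $s_i$-degree $0$ in that slot, which lowers $\deg_{\mathbf s}$ unless all $p_i=0$.

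Once $f$ is independent of $\mathbf s$, we use $L_{k,0}$. By \eqref{action}, the term $-kq\,\lambda_i^k(t_i-kq\alpha_i)\partial_{t_i}$ produces a $k^2$-coefficient $q^2\alpha_i\,\partial_{t_i}f$. By Proposition~\ref{prop4.1} this lies in $N$ and lowers the $\mathbf t$-degree. So $f\in 1\otimes V$.

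\textbf{Case $q=-1$, $\beta_{1,i}\neq0$, $\alpha_i=0$.} The $s$-shift still appears, but the $k$-polynomial degree coming from $t_i-kq\alpha_i$ is now lost. We use instead $W_{k,1}$, which acts on the $i$-th factor as
\[
\beta_{1,i}\lambda_i^k f(\dots,s_i+k,\dots),
\]
and $L_{k,1}$, which contains $k\beta_{1,i}\lambda_i^k\partial_{t_i}$. Expanding $f(\dots,s_i+k,\dots)$ in powers of $k$, the top coefficient gives a lower-$s_i$-degree element. This reduces to $p_i=0$ as in the proof of Theorem~\ref{thm3.5}.

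Then in $L_{k,1}f$, the coefficient of $\lambda_i^k k$ is
\[
\bigl(\Theta_{\mathbf h_i,\beta_{1,i}}(t_i)+\beta_{1,i}\partial_{t_i}\bigr)f.
\]
Combined with multiplication by $t_i$, which is available through $W_{0,0}$-type actions, this yields $\partial_{t_i}f\in N$. A cleaner route is to compare against the $k$-independent term: the coefficient of $\lambda_i^k k$ in $L_{k,1}f$ minus $\Theta$ times the coefficient of $\lambda_i^k k$ in $W_{k,1}f/\beta_{1,i}$. Both coefficients lie in $N$, since $\Theta f$ can be written as an application of $\Theta(W_{0,0}\text{ on slot }i)$. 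However, $W_{0,0}$ acts diagonally on all slots, so the per-slot multiplication must itself be isolated via Proposition~\ref{prop4.1} applied to $W_{k,0}$.

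The main obstacle is exactly this bookkeeping. With several tensor factors and the diagonal action, we need that, after Proposition~\ref{prop4.1}, multiplication by $t_i$ and the operator $\partial_{t_i}$ on a single slot preserve $N$-membership for elements of the minimal form. We would handle this by always extracting the coefficient of $\lambda_i^k k^j$, which acts on slot $i$ alone. The contributions of the $V$-part vanish for $k>K$, and the other slots contribute only through different $\lambda_\ell$.

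Iterating these reductions gives $1\otimes v\in N$ for some nonzero $v$, and Theorem~\ref{thm4.2} then yields $N=T$.
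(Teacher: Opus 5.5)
Your overall strategy is the paper's: remove the $\mathbf s$-dependence using the top $k$-coefficient of the $\lambda_i^k$-component of $W_{k,0}f$ (or of $W_{k,1}f$ when $q=-1$, $\alpha_i=0$), then remove the $\mathbf t$-dependence using $L_{k,0}$ (resp.\ $L_{k,1}$), and conclude with Theorem~\ref{thm4.2}. However, the $\mathbf t$-reduction for $\alpha_i\neq0$ is wrong as written. The $k^2$-coefficient of the $\lambda_i^k$-component of $L_{k,0}f$ is not $q^2\alpha_i\,\partial_{t_i}f$. By \eqref{eq:hm-h1-relation}, $h_{i,k}$ is quadratic in $k$, with $k^2$-coefficient $-\alpha_i\frac{h_{i,1}(t)-h_{i,1}(\alpha_i)}{t-\alpha_i}$, so the multiplication term $q\,h_{i,k}(q^{-1}t_i)f$ also contributes. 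What Proposition~\ref{prop4.1} actually places in $N$ is
\[
\alpha_i\Bigl(q^2\frac{\partial f}{\partial t_i}-q\,\phi_i(t_i)f\Bigr),\qquad
\phi_i(t_i)=\frac{h_{i,1}(q^{-1}t_i)-h_{i,1}(\alpha_i)}{q^{-1}t_i-\alpha_i},
\]
i.e.\ \eqref{eq:t-derivative-coefficient}. Since $\phi_i$ is in general a nonconstant polynomial, $\partial_{t_i}f\in N$ does not follow.

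The repair is the device you only reach, after some detours, in the second case. For every $\mathbf s$-free $g\in N$, the $k^0$-coefficient of the $\lambda_i^k$-component of $W_{k,0}g$ is $t_ig$, so $t_ig\in N$. Iterating gives $p(t_i)f\in N$ for all $p\in\mathbb C[t_i]$, in particular $\phi_i(t_i)f\in N$. Subtracting, and using $q\alpha_i\neq0$, then gives $\partial_{t_i}f\in N$. This is exactly the paper's argument, and the same device is what makes the $q=-1$, $\alpha_i=0$ case work. Your ``cleaner route'' there yields nothing: for $\mathbf s$-free $f$ the $\lambda_i^k$-component of $W_{k,1}f$ is just $\beta_{1,i}f$, with no $k$-term. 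A smaller point concerns the lexicographic order: $p_i$ for $i\geq2$ need not be the $s_i$-degree of $f$ (e.g.\ $s_1\otimes v+s_2^2\otimes w$ has $\deg_{\mathbf s}=(1,0)$). So you should extract the coefficient of $k^{d_i+1}$ with $d_i=\deg_{s_i}f$, or proceed slot by slot, or use total $\mathbf s$-degree as the paper does.
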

\begin{proof}
  We will divide the proof into two parts.

  {\bf Case 1}.  $q\neq -1$.

  Let $X$ be a nonzero submodule of $T$, and choose a nonzero $f\in X$ of minimal total degree in the variables $s_1,\ldots,s_m$. For all sufficiently large $k$, the $i$-th exponential component of $W_{k,0}f$ is
\begin{align*}
(t_i-kq\alpha_i)
f(t_1,s_1,\ldots,t_i,s_i-kq,\ldots,t_m,s_m).
\end{align*}
Since $\alpha_i\neq0$, the highest relevant coefficient in $k$ lowers the $s_i$-degree. Proposition~\ref{prop4.1} contradicts the minimality of $f$. Thus $f=f(t_1,\ldots,t_m)$. Choose such an $f$ of minimal total degree in $t_1,\ldots,t_m$.

For each $1\leq i\leq m$, comparing the coefficient of
$k^2\lambda_i^k$ in $L_{k,0}f$ yields
\begin{align}\label{eq:t-derivative-coefficient}
\alpha_i\left(
-q\frac{h_{i,1}(q^{-1}t_i)-h_{i,1}(\alpha_i)}
{q^{-1}t_i-\alpha_i}f
+q^2\frac{\partial f}{\partial t_i}
\right)\in X.
\end{align}
On the other hand, the constant term in the $i$-th exponential component of
$W_{k,0}f$ implies that $t_if\in X$. Then we get $p(t_i)f\in X$ for all
$p(t_i)\in\mathbb C[t_i]$. In particular,
\[
\frac{h_{i,1}(q^{-1}t_i)-h_{i,1}(\alpha_i)}
{q^{-1}t_i-\alpha_i}f\in X.
\]
Since $\alpha_iq\neq0$, subtracting this term in \eqref{eq:t-derivative-coefficient}, we obtain
$\frac{\partial f}{\partial t_i}\in X$. 
The minimality of the degree
of $f$ in $t_1,\ldots,t_m$ shows that $f=1\otimes v$ for some
$0\neq v\in V$. By Theorem~\ref{thm4.2}, $X=T$.

{\bf Case 2}. $q= -1$.

Let $X$ be a nonzero submodule of $T$ and choose a nonzero $f\in X$ of minimal degree in $\mathbf s$. For every $i$ with $\alpha_i\neq0$, by the argument in Case 1, we get the results. Now we only consider $\alpha_i=0$ and  $\beta_{1,i}\neq0$. Then for all sufficiently large $k$,
\begin{align*}
W_{k,1}f=\sum_{i=1}^m\lambda_i^k\beta_{1,i}
f(t_1,s_1,\ldots,t_i,s_i+k,\ldots,t_m,s_m).
\end{align*}
Proposition~\ref{prop4.1} gives the same contradiction whenever the degree in $s_i$ is positive. Hence $f=f(t_1,\ldots,t_m)$.

Choose such an $f$ of minimal degree in $\mathbf t$. Since
\begin{align*}
W_{k,0}f=\sum_{i=1}^m\lambda_i^k(t_i+k\alpha_i)f,
\end{align*}
we have $t_if\in X$ for every $i$. Therefore $p(t_1,\ldots,t_m)f\in X$ for every polynomial $p$. Moreover,
\begin{align*}
L_{k,1}f=\sum_{i=1}^m\lambda_i^k\left(
\bigl(\beta_{2,i}+k\Theta_{\mathbf h_i,\beta_{1,i}}(t_i)\bigr)f
+k\beta_{1,i}\frac{\partial f}{\partial t_i}\right).
\end{align*}
If $\beta_{1,i}\neq0$, Proposition~\ref{prop4.1} implies
$\Theta_{\mathbf h_i,\beta_{1,i}}(t_i)f+\beta_{1,i}\frac{\partial f}{\partial t_i}\in X$, then $\frac{\partial f}{\partial t_i}\in X$.   The minimality of the degree in $t$ yields $f=1\otimes v$ for some nonzero $v\in V$. By Theorem~\ref{thm4.2}, $X=T$.
\end{proof}

\begin{corollary}
Let $m\in\mathbb N$ and $\lambda_i\in\mathbb C^*$ be pairwise distinct. Assume that either $\alpha_i\neq0$, or $q=-1$ and
$\beta_{1,i}\neq0$ for each $i$. Then $T$ is an irreducible $\mathcal B(q)$-module.
\end{corollary}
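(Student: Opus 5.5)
This is essentially a restatement of Theorem~\ref{thm4.3}, so the plan is to deduce it directly from Theorem~\ref{thm4.2} together with the irreducibility argument used for Theorem~\ref{thm4.3}. Throughout, $V$ is the fixed simple restricted $\mathcal B(q)$-module in the definition of $T$. The hypotheses match exactly: the $\lambda_i$ are pairwise distinct, and each factor satisfies $\alpha_i\neq0$, or $q=-1$ and $\beta_{1,i}\neq0$. By Theorem~\ref{thm3.5}, these are precisely the conditions under which each factor $\Omega(\cdots)$ is simple.

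Let $X\subseteq T$ be a nonzero submodule. The goal is to produce an element $1\otimes v\in X$ with $v\neq0$; Theorem~\ref{thm4.2} then gives $X=T$.

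\textbf{Step 1: remove $\mathbf s$.} Pick $f\in X$ of minimal degree in $\mathbf s$. Choose $k$ large enough that the high-index generators kill every $V$-component of $f$ (this uses that $V$ is restricted). Apply $W_{k,0}$ when $\alpha_i\neq0$, or $W_{k,1}$ when $q=-1$, $\alpha_i=0$, $\beta_{1,i}\neq0$. Either way the $i$-th exponential component of the result involves the shift $s_i\mapsto s_i-kq$ (respectively $s_i+k$). Expand in powers of $k$ and apply Proposition~\ref{prop4.1}, separating the distinct $\lambda_i^k$ and the polynomial-in-$k$ coefficients. This extracts an element of $X$ of strictly smaller $\mathbf s$-degree unless $f$ is independent of $s_i$. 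So $f=f(\mathbf t)\otimes(\cdot)$.

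\textbf{Step 2: remove $\mathbf t$.} Take such an $f$ of minimal $\mathbf t$-degree. The $k$-constant coefficient of $W_{k,0}f$ shows $t_if\in X$, hence $p(\mathbf t)f\in X$ for every polynomial $p$. Now split into the two cases.
\begin{itemize}
\item[(a)] If $\alpha_i\neq0$, take the $k^2\lambda_i^k$ coefficient of $L_{k,0}f$. Using \eqref{eq:hm-h1-relation} for $h_k$, this equals $\alpha_i$ times a polynomial multiple of $f$ plus $q^2\,\partial f/\partial t_i$. Subtracting the polynomial multiple, which lies in $X$, gives $\partial f/\partial t_i\in X$.
\item[(b)] If $q=-1$ and $\beta_{1,i}\neq0$, take the $k\lambda_i^k$ coefficient of $L_{k,1}f$. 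This gives $\Theta f+\beta_{1,i}\,\partial f/\partial t_i\in X$, hence again $\partial f/\partial t_i\in X$.
\end{itemize}
By minimality of the $\mathbf t$-degree, $f=1\otimes v$ with $v\neq0$.

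The main obstacle is the bookkeeping in Step 1. One must check that the top coefficient in $k$ really gives an element of lower $s_i$-degree. In the $W_{k,0}$ case, the factor $(t_i-kq\alpha_i)$ adds one extra power of $k$. One should therefore compare the coefficient of $k^{p_i+1}$, which is a nonzero multiple of $f$ with the top $s_i$ part removed, against lower-order terms.

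Proposition~\ref{prop4.1} also requires the coefficient polynomials in $k$ to have distinct degrees. To arrange this, I would first expand by degree in $k$ for each fixed $i$, and then invoke the proposition.
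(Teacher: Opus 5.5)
Your argument is correct and is in substance the paper's: Steps 1--2 reproduce Cases 1--2 of the proof of Theorem~\ref{thm4.3}, followed by an appeal to Theorem~\ref{thm4.2}. The paper's proof is a one-line citation of Theorem~\ref{thm4.3} with $V$ the one-dimensional trivial module, which is irreducible and restricted, so that $T\cong\bigotimes_{i=1}^m\Omega(\cdots)$; having recognized the statement as a special case of that theorem, you could have cited it and stopped rather than re-deriving its proof.
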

\begin{proof}
    This follows from Theorem \ref{thm4.3} by taking $V$ to be the one-dimensional trivial module.
\end{proof}
\begin{theorem}
    Let $N_{n}$ be the subspace of $\Omega(\lambda,\alpha_{1},\beta_1,\gamma_1,\mathbf{h})\otimes \Omega(\lambda,\alpha_{2},\beta_2,\gamma_2,\mathbf{g})$ spanned by the elements $s_{1}^{r}(s_{1}+s_{2})^{l}f_{1}(t_1)f_{2}(t_2)$, where $0\leq r \leq n$, $l\in\mathbb{N}$ and $f_1,f_2\in\mathbb C[t]$. Then $N_{n}$ is a proper submodule of $\Omega(\lambda,\alpha_{1},\beta_1,\gamma_1,\mathbf{h})\otimes \Omega(\lambda,\alpha_{2},\beta_2,\gamma_2,\mathbf{g})$ for any $n\in\mathbb{Z}_{+}$.
\end{theorem}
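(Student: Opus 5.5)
The plan is to change variables in the $s$-coordinates and check that each generator of $\mathcal B(q)$ maps spanning elements of $N_n$ back into $N_n$. Put $u=s_1$ and $v=s_1+s_2$. Since $(s_1,s_2)\mapsto(u,v)$ is an invertible linear substitution, $\mathbb C[t_1,t_2,s_1,s_2]=\mathbb C[t_1,t_2][u,v]$, and $N_n$ is exactly the set of polynomials whose degree in $u$ is at most $n$. Two consequences follow.

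\begin{itemize}
\item Properness: $s_1^{n+1}=u^{n+1}\notin N_n$, so $N_n$ is a proper subspace.
\item Membership test: an element lies in $N_n$ if and only if, written in $t_1,t_2,u,v$, its $u$-degree is at most $n$.
\end{itemize}

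It therefore suffices to show that every basis element of $\mathcal B(q)$ does not raise the $u$-degree of $F=s_1^r(s_1+s_2)^l f_1(t_1)f_2(t_2)$ above $n$ when $r\le n$. The element acts by $x(a\otimes b)=xa\otimes b+a\otimes xb$, with the formulas \eqref{C3.1}--\eqref{W3.3}.

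Next I would handle the operators that do not multiply by $s$. These are $W_{m,0}$, the scalar operators $L_{0,-2q}$, $W_{0,-2q}$, $C_1$, $C_2$, and, when $q=-1$, the operators $W_{m,1}$ and $L_{m,1}$. Each of them acts on each factor as a shift $s_j\mapsto s_j+c$ (with $c=-mq$ or $c=m$), followed by multiplication by a polynomial in $t_j$ and possibly $\partial_{t_j}$.

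\begin{itemize}
\item The derivative $\partial_{t_j}$ touches only $f_j$, and multiplication by a polynomial in $t_j$ does not change the $u$-degree.
\item Shifting $s_1$ alone sends $u\mapsto u+c$ and $v\mapsto v+c$.
\item Shifting $s_2$ alone sends $v\mapsto v+c$ and fixes $u$.
\end{itemize}

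Neither shift raises the $u$-degree, so these operators preserve $N_n$ term by term.

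The key step is $L_{m,0}$, which multiplies by $s_j+qh_m(q^{-1}t_j)$ and so could raise the $u$-degree. Here the hypothesis that both factors carry the same $\lambda$ is essential: both summands then have the common factor $\lambda^m$. Writing $\tilde F$ for $F$ with $s_1,s_2\mapsto s_1-mq,s_2-mq$, the part of $L_{m,0}F$ that involves $s$ linearly is
\[
\lambda^m\Big(s_1\,(s_1-mq)^r(s_1+s_2-mq)^l+s_2\,s_1^r(s_1+s_2-mq)^l\Big)f_1f_2 .
\]
Expanding $(s_1-mq)^r=s_1^r+(\text{terms of lower degree in } s_1)$, this equals
\[
\lambda^m(s_1+s_2)\,s_1^r(s_1+s_2-mq)^l f_1f_2
\]
plus terms of $u$-degree at most $r$, all of which lie in $N_n$. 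The remaining contributions are of two kinds. The terms $qh_m(q^{-1}t_j)\tilde F$ have $u$-degree at most $r$. The derivative terms $-mq\lambda^m(t_j-mq\alpha_j)\partial_{t_j}\tilde F$ only replace $f_j$ by $f_j'$. So $L_{m,0}F\in N_n$.

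I expect this cancellation to be the only real obstacle. Without $\lambda_1=\lambda_2$, the first summand alone would contain $s_1^{r+1}$ with a separate exponential factor, and $N_n$ would not be stable. Once the cancellation is checked, all generators preserve $N_n$, so $N_n$ is a proper submodule for every $n\in\mathbb Z_+$.
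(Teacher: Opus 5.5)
Your proposal is correct and is essentially the paper's proof. It uses the same substitution $u=s_1$, $v=s_1+s_2$ to identify $N_n$ with the polynomials of $u$-degree at most $n$. It makes the same observation that shifts, multiplication by polynomials in $t_j$, and $\partial_{t_j}$ never raise the $u$-degree. And it relies on the same key cancellation for $L_{m,0}$, which needs the common $\lambda$:
\[
s_1(s_1-mq)^r+s_2s_1^r=(s_1+s_2)s_1^r+s_1\bigl((s_1-mq)^r-s_1^r\bigr).
\]
The only slip is notational. In the $j$-th summand of $L_{m,0}F$ only $s_j$ is shifted, not both $s_1,s_2$ as your definition of $\tilde F$ says; this does not affect the degree count.
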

\begin{proof}
Set $x=s_1$ and $y=s_1+s_2$, then 
$N_n=\{F\in\mathbb C[t_1,t_2,x,y]\mid \deg_xF\leq n\}.$
For $0\leq r\leq n$, the action of $W_{k,0}$ on
$x^ry^lf_1(t_1)f_2(t_2)$ is
\begin{align*}
\lambda^k(y-kq)^l\left(
(t_1-kq\alpha_1)(x-kq)^r
+(t_2-kq\alpha_2)x^r\right)f_1(t_1)f_2(t_2),
\end{align*}
which has $x$-degree at most $r$. In the $L_{k,0}$-action, the only terms that could have $x$-degree $r+1$ combine as
\begin{align}\label{eq:L_k0_action}
\lambda^k(y-kq)^l
\left(x(x-kq)^r+(y-x)x^r\right)f_1(t_1)f_2(t_2).
\end{align}
Since $x((x-kq)^r-x^r)$ has $x$-degree at most $r$, $x$-degree in \eqref{eq:L_k0_action} is at most $r$. It follows that $L_{k,0}N_n,W_{k,0}N_n\subseteq N_n$. When $q=-1$, the $i=1$ actions preserve $N_n$ because they only shift $s_1,s_2$ and apply polynomial differential operators in $t_1,t_2$. All remaining actions vanish. So $N_n$ is a submodule. And it is proper since $x^{n+1}\notin N_n$.

This completes the proof.
\end{proof}
If one rank one factor is not simple, the tensor product of a proper submodule of that factor with all remaining factors and $V$ is proper. The same argument applies if $V$ is reducible. Combining with the previous results, we conclude that
\begin{theorem}\label{thm4.6}
    The module $T$ is irreducible if and only if $\lambda_1,\ldots,\lambda_m$ are pairwise distinct, $V$ is an irreducible restricted $\mathcal B(q)$-module, and, for each $1\leq i\leq m$, either $\alpha_i\neq0$, or $q=-1$ and $\beta_{1,i}\neq0$.
\end{theorem}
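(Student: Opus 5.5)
The plan is to prove the two implications separately, relying almost entirely on the results already established in this section.

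For the ``if'' direction, assume $\lambda_1,\ldots,\lambda_m$ are pairwise distinct, $V$ is an irreducible restricted module, and for each $i$ either $\alpha_i\neq0$, or $q=-1$ and $\beta_{1,i}\neq0$. This is exactly the hypothesis of Theorem~\ref{thm4.3}, so $T$ is irreducible. Nothing further is needed, except to note that in the case $-2q\in\mathbb N$, $q\neq-1$, the extra scalar actions of $L_{0,-2q}$ and $W_{0,-2q}$ do not affect the argument, because they act diagonally.

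For the ``only if'' direction, I argue by contraposition and exhibit a proper nonzero submodule whenever one of the three conditions fails.

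\emph{Case (a): $V$ is reducible.} Let $V_1$ be a proper nonzero submodule of $V$. Then $\bigotimes_i\Omega_i\otimes V_1$ is a proper nonzero submodule of $T$.

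\emph{Case (b): some factor has $\alpha_i=0$ and not ($q=-1$ and $\beta_{1,i}\neq0$).} By Theorem~\ref{thm3.5} that factor $\Omega_i$ is not simple; explicitly, $t_i\mathbb C[t_i,s_i]$ is a proper submodule. Tensoring it with the remaining factors and with $V$ gives a proper nonzero submodule of $T$. Properness follows because $1\otimes\cdots\otimes 1\otimes v$ is not in it.

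\emph{Case (c): $\lambda_i=\lambda_j$ for some $i\neq j$.} I would reorder so that $i=1$, $j=2$, and apply the preceding theorem on $N_n$ to $\Omega_1\otimes\Omega_2$. It gives a proper submodule $N_n$, and then $N_n\otimes\bigotimes_{k\geq3}\Omega_k\otimes V$ is proper and nonzero in $T$.

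The main obstacle is case (c). The theorem on $N_n$ is stated only for the $q=-1$ family $\Omega(\lambda,\alpha,\beta,\gamma,\mathbf h)$. I would first observe that its proof uses only the following facts: the $W_{k,0}$ and $L_{k,0}$ actions shift $s$ by $-kq$ and carry the common factor $\lambda^k$, the shift of $x=s_1$ lowers $x$-degree, and the remaining generators either act by scalars ($\gamma$ terms) or vanish. Hence the same subspace $N_n$ is a proper submodule of $\Omega(\lambda,\alpha_1,\ldots)\otimes\Omega(\lambda,\alpha_2,\ldots)$ for every $q$ and for the $\Omega(\lambda,\alpha,\mathbf h)$ and $\Omega(\lambda,\alpha,\gamma,\mathbf h)$ families as well.

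The one point needing care is the leading-coefficient cancellation in \eqref{eq:L_k0_action}. It relies on both factors sharing the same $\lambda$, so that $x(x-kq)^r+(y-x)x^r$ loses its $x^{r+1}$ term. I would recheck that the $h$-terms and the derivative terms only multiply by polynomials in $t_1,t_2$, and therefore do not raise the $x$-degree. With this extension in hand, the three cases cover the negation of the stated condition, and the theorem follows.
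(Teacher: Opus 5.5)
Your proposal is correct and follows the paper's argument. Sufficiency is Theorem~\ref{thm4.3}, and necessity comes from three constructions: tensoring a proper submodule of a non-simple factor (such as $t_i\mathbb C[t_i,s_i]$) with the remaining factors, tensoring a proper submodule of $V$ with the factors, and using the submodule $N_n$ when two $\lambda_i$ coincide. The extension step you flag as the main obstacle is not needed. The module $\Omega(\lambda,\alpha,\beta,\gamma,\mathbf h)$ is defined for every $q$, with the $\beta$- and $\gamma$-terms switched off by $\delta_{q,-1}$ and $\delta_{i,-2q}$, so the $N_n$ theorem already covers all three families.
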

\subsection{Isomorphism classes}
We determine when two such tensor product modules are isomorphic.

For brevity, denote by
\begin{align*}
T'=
\begin{cases}
\bigotimes_{j=1}^{n}\Omega(\mu_j,b_j,\mathbf g_j)\otimes V',
&-2q\notin\mathbb N,\\
\bigotimes_{j=1}^{n}\Omega(\mu_j,b_j,\gamma'_j,\mathbf g_j)\otimes V',
&-2q\in\mathbb N\ \text{and}\ q\neq-1,\\
\bigotimes_{j=1}^{n}
\Omega(\mu_j,b_j,\beta'_j,\gamma'_j,\mathbf g_j)\otimes V',
&q=-1.
\end{cases}
\end{align*}
Write
\begin{align*}
\mathbf h_i=\{h_{i,k}(t)\mid k\in\mathbb Z\}\in\mathcal T_{\alpha_i},
\qquad
\mathbf g_j=\{g_{j,k}(t)\mid k\in\mathbb Z\}\in\mathcal T_{b_j}.
\end{align*}
If $-2q\in\mathbb N$, set
\begin{align*}
\Gamma(T)=\sum_{i=1}^m\gamma_i,\qquad
\Gamma(T')=\sum_{j=1}^n\gamma'_j,
\end{align*}
where $\gamma_i=(\gamma_{1,i},\gamma_{2,i})$ and
$\gamma'_j=(\gamma'_{1,j},\gamma'_{2,j})$.

When $-2q\in\mathbb N$, for
$\delta=(\delta_1,\delta_2)\in\mathbb C^2$, denote by $V^\delta$
the module with the same underlying vector space as $V$, on which
\begin{align*}
W_{0,-2q}\mathbin{\cdot_\delta}v=W_{0,-2q}v+\delta_1v,\qquad
L_{0,-2q}\mathbin{\cdot_\delta}v=L_{0,-2q}v+\delta_2v,
\end{align*}
while all other operators act as on $V$.

\begin{theorem}\label{thm:4.8}
Let $T$ and $T'$ be irreducible tensor product modules as above.
Then $T\cong T'$ if and only if, after a permutation of the tensor
factors of $T'$, the following conditions hold:
\begin{enumerate}
\item $m=n$;
\item $\lambda_i=\mu_i,\ \alpha_i=b_i,\ \mathbf h_i=\mathbf g_i$
for all $1\leq i\leq m$;
\item if $q=-1$, then $\beta_i=\beta'_i$ for all $1\leq i\leq m$;
\item if $-2q\in\mathbb N$, then
$
V'\cong V^{\Gamma(T)-\Gamma(T')}
$
as $\mathcal B(q)$-modules;

if
$-2q\notin\mathbb N$, then
$
V'\cong V
$
as $\mathcal B(q)$-modules.
\end{enumerate}
\end{theorem}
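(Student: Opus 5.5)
I will prove sufficiency by an explicit map and necessity by reading off the parameters from the asymptotic action of $W_{k,0}$ and $L_{k,0}$ for $k\gg0$.

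\textbf{Sufficiency.} Assume (1)–(4) hold. The rank-one factors of $T$ and $T'$ then agree except for their $\gamma$-parameters. Each $\gamma$-parameter only contributes scalars $\gamma_{1,i},\gamma_{2,i}$ to $W_{0,-2q}$ and $L_{0,-2q}$. Hence the identity map on $\mathbb C[\mathbf t,\mathbf s]$, tensored with an isomorphism $V\cong V'^{\,\Gamma(T')-\Gamma(T)}$, intertwines all operators. Equivalently, $V'\cong V^{\Gamma(T)-\Gamma(T')}$. This is immediate from the coproduct formula $x(f\otimes v)=xf\otimes v+f\otimes xv$. When $-2q\notin\mathbb N$ there are no $\gamma$'s and we use $V\cong V'$ directly.

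\textbf{Necessity.} Let $\phi:T\to T'$ be an isomorphism and fix $0\neq v\in V$. Then $\phi(1\otimes v)=\sum_j F_j\otimes w_j$ with $F_j\in\mathbb C[t'_1,s'_1,\dots,t'_n,s'_n]$ and the $w_j$ linearly independent. Choose $K$ such that $W_{k,0}$ and $L_{k,0}$ for $k>K$ annihilate $v$ and all $w_j$. For $k>K$, the element $W_{k,0}(1\otimes v)=\sum_i\lambda_i^k(t_i-kq\alpha_i)\otimes v$ maps to $\sum_{i}\lambda_i^k(t_i-kq\alpha_i)\phi(1\otimes v)$, where multiplication by $t_i$ is pulled through $\phi$ using $t_i=W_{0,0}$-type elements. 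This must equal $\sum_j\sum_l\mu_l^k(t'_l-kqb_l)F_j(\dots,s'_l-kq,\dots)\otimes w_j$.

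I compare both sides as exponential-polynomials in $k$ using Proposition \ref{prop4.1}, or rather its linear-independence underlying statement. This forces $\{\lambda_i\}=\{\mu_l\}$, hence $m=n$, and after permuting factors $\lambda_i=\mu_i$. Comparing the $k$-coefficients then gives $\alpha_i=b_i$. I expect the main obstacle to be this step: the left side contains the unknown operators "multiplication by $t_i$" transported by $\phi$. To handle it, I will first prove that $\phi(1\otimes v)$ has $\mathbf s'$-degree $0$. The argument is that the $\lambda_i^k$-component of the left side is of degree at most $1$ in $k$, while shifting $s'_l$ by $-kq$ on the right would produce higher powers of $k$. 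Next I show that $\phi$ maps $\mathbb C[\mathbf t,\mathbf s]\otimes v$ onto $\mathbb C[\mathbf t',\mathbf s']\otimes$ (a fixed vector), arguing as in Theorem \ref{thm4.2}. The same $k$-expansion applied to $L_{k,0}$, in the coefficient of $k\lambda_i^k$ and the constant coefficient, then yields $h_{i,k}=g_{i,k}$ via \eqref{eq:hm-h1-relation}. For $q=-1$ the analogous expansion of $W_{k,1}$ and $L_{k,1}$ gives $\beta_i=\beta'_i$.

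Finally, once $\phi(f\otimes v)=f\otimes\psi(v)$ is established, with $\psi:V\to V'$ linear and $\mathbb C[\mathbf t,\mathbf s]$ identified with $\mathbb C[\mathbf t',\mathbf s']$, I compare the actions of all generators. Every generator other than $W_{0,-2q}$ and $L_{0,-2q}$ acts identically on the polynomial parts, so $\psi$ commutes with them. The scalar differences $\Gamma(T)-\Gamma(T')$ are absorbed into $V'$, which shows that $\psi$ is an isomorphism $V\to V'^{\,\Gamma(T')-\Gamma(T)}$. This is condition (4).
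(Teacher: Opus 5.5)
\textbf{There is a genuine gap.} Your sufficiency argument is correct. Your outline of necessity also follows the paper: separate the $\lambda_i^k$-components of $W_{k,0},L_{k,0},W_{k,1},L_{k,1}$ with Proposition~\ref{prop4.1}, then absorb the $\gamma$-scalars into $V'$. But the step you flag as the main obstacle is never resolved.

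\emph{Your justification fails.} We have $W_{0,0}(f\otimes v)=(t_1+\cdots+t_m)f\otimes v+f\otimes W_{0,0}v$, so $W_{0,0}$ cannot be used to move a single $t_i$ through $\phi$.

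\emph{Your workaround is insufficient.} It only controls $\phi(1\otimes v)$. The $\lambda_i^k$-component of $\phi\bigl(L_{k,0}(1\otimes v)\bigr)$ is $\phi\bigl((s_i+qh_{i,k}(q^{-1}t_i))\otimes v\bigr)$, which involves the unknown vectors $\phi(s_i\otimes v)$ and $\phi(t_i^j\otimes v)$. Comparing it with $L_{k,0}\phi(1\otimes v)$ therefore gives neither $\mathbf h_i=\mathbf g_i$ nor the removal of the $\mathbf t'$-dependence of $\phi(1\otimes v)$. Likewise, the formula $\phi(f\otimes v)=f\otimes\psi(v)$ that your last paragraph relies on is never derived. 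Your appeal to the argument of Theorem~\ref{thm4.2} yields membership in a submodule, not an intertwining identity.

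\emph{A smaller point.} Your degree count gives only $\deg_{s'_l}\phi(1\otimes v)\le 1$ when $b_l=0$, which can occur for $q=-1$. Shifting a polynomial that is linear in $s'_l$ produces only $k^1$, so an extra comparison of $k^1$-coefficients is needed there.

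\textbf{The missing idea} is the paper's Claim 2, and it uses a tool you already have. Apply the separation to $\phi(W_{k,0}f)=W_{k,0}\phi(f)$ and $\phi(L_{k,0}f)=L_{k,0}\phi(f)$ for \emph{every} $f\in T$, with $k$ large depending on $f$, not only for $1\otimes v$.
\begin{itemize}
\item Once $\lambda_i=\mu_i$, Proposition~\ref{prop4.1} identifies the $\lambda_i^k$-components as polynomials in $k$: $\phi\bigl((t_i-kq\alpha_i)f(\ldots,s_i-kq,\ldots)\bigr)=(t'_i-kqb_i)\,\phi(f)(\ldots,s'_i-kq,\ldots)$.
\item Their constant terms give $\phi(t_if)=t'_i\phi(f)$.
\item The $L_{k,0}$-components give $\phi(s_if)=s'_i\phi(f)$ in the same way, since $h_{i,0}=g_{i,0}=0$.
\end{itemize}
Hence $\phi$ is $\mathbb C[\mathbf t,\mathbf s]$-linear.

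Applied to $F_v=\phi(1\otimes v)$, this gives the closed identities $(t'_i-kq\alpha_i)F_v=(t'_i-kqb_i)F_v(\ldots,s'_i-kq,\ldots)$ and $\bigl(g_{i,k}(q^{-1}t'_i)-h_{i,k}(q^{-1}t'_i)\bigr)F_v=k(t'_i-kq\alpha_i)\,\partial F_v/\partial t'_i$. From these one gets $\alpha_i=b_i$, $F_v\in 1\otimes V'$ and $\mathbf h_i=\mathbf g_i$. The paper uses surjectivity of $\phi$ to rule out a factor $(t'_i-q\alpha_i)^r$ with $r>0$. Together with linearity this yields $\phi(f\otimes v)=f\otimes\tau(v)$. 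After that, your treatment of $\beta$ and of $V'\cong V^{\Gamma(T)-\Gamma(T')}$ goes through.
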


\begin{proof}
\setcounter{claim}{0}
Suppose that $\phi:T\to T'$ is a $\mathcal B(q)$-module
isomorphism.

\begin{claim}
We have $m=n$, and after reordering the tensor factors of $T'$,
$\lambda_i=\mu_i$ for $1\leq i\leq m$.
\end{claim}
Write
\begin{align*}
\sigma_i^kf(\mathbf t,\mathbf s)
&=f(t_1,s_1,\ldots,t_i,s_i-kq,\ldots,t_m,s_m),\\
\sigma_j'^kg(\mathbf t',\mathbf s')
&=g(t_1',s_1',\ldots,t_j',s_j'-kq,\ldots,t_n',s_n').
\end{align*}
Since $V$ and $V'$ are restricted, for every $f\in T$ and all
sufficiently large $k$, we have
\begin{align*}
W_{k,0}f=\sum_{i=1}^m
\lambda_i^k(t_i-kq\alpha_i)\sigma_i^kf.
\end{align*}
The corresponding formula holds on $T'$. Hence
\begin{align*}
\sum_{i=1}^m\lambda_i^k
\phi\bigl((t_i-kq\alpha_i)\sigma_i^kf\bigr)
=\sum_{j=1}^n\mu_j^k
(t_j'-kqb_j)\sigma_j'^k\phi(f).
\end{align*}
For fixed $f$, the coefficients of the exponential terms are
polynomials in $k$ with values in $T'$. If $f\neq0$, the
coefficients on both sides are nonzero. After grouping equal
exponential bases, Proposition~\ref{prop4.1} shows that every
$\lambda_i$ occurs among the $\mu_j$'s and vice versa. Hence
\begin{align*}
\{\lambda_1,\ldots,\lambda_m\}
=\{\mu_1,\ldots,\mu_n\}.
\end{align*}
Thus $m=n$, and after reordering the tensor factors of $T'$,
\begin{align*}
\lambda_i=\mu_i,\qquad 1\leq i\leq m.
\end{align*}

\begin{claim}
For every $f\in T$ and $1\leq i\leq m$,
\begin{align*}
\phi(t_if)=t_i'\phi(f),\qquad
\phi(s_if)=s_i'\phi(f).
\end{align*}
\end{claim}
For every $f\in T$ and all sufficiently large $k$, we have
\begin{align*}
\sum_{i=1}^m\lambda_i^k
\phi\bigl((t_i-kq\alpha_i)\sigma_i^kf\bigr)=\phi(W_{k,0}f)=
W_{k,0}\phi(f)
=\sum_{i=1}^m\lambda_i^k
(t_i'-kqb_i)\sigma_i'^k\phi(f).
\end{align*}
By Proposition~\ref{prop4.1}, for each $1\leq i\leq m$, we have
\begin{equation}\label{eq:iso-W-component}
\phi\bigl((t_i-kq\alpha_i)\sigma_i^kf\bigr)
=(t_i'-kqb_i)\sigma_i'^k\phi(f).
\end{equation}
Both sides are polynomials in $k$. Comparing their constant terms, we get
\begin{equation}\label{eq:iso-t-linear}
\phi(t_if)=t_i'\phi(f).
\end{equation}

Similarly, for all sufficiently large $k$,
\begin{align*}
\phi(L_{k,0}f)
&=\sum_{i=1}^m\lambda_i^k\phi\left(
\left(s_i+qh_{i,k}(q^{-1}t_i)
-kq(t_i-kq\alpha_i)\frac{\partial}{\partial t_i}\right)
\sigma_i^kf\right),\\
L_{k,0}\phi(f)
&=\sum_{i=1}^m\lambda_i^k
\left(s_i'+qg_{i,k}(q^{-1}t_i')
-kq(t_i'-kqb_i)\frac{\partial}{\partial t_i'}\right)
\sigma_i'^k\phi(f).
\end{align*}
Since $\phi(L_{k,0}f)=L_{k,0}\phi(f)$,
Proposition~\ref{prop4.1} shows that the corresponding summands are
equal. Both sides are polynomials in $k$. Since
$h_{i,0}=g_{i,0}=0$, comparison of their constant terms gives
\begin{equation}\label{eq:iso-s-linear}
\phi(s_if)=s_i'\phi(f).
\end{equation}
Then equations~\eqref{eq:iso-t-linear}--\eqref{eq:iso-s-linear} show that for $p\in\mathbb C[\mathbf t,\mathbf s]$,
\begin{align*}
\phi(p(\mathbf t,\mathbf s)f)
=p(\mathbf t',\mathbf s')\phi(f).
\end{align*}

\begin{claim}
For every $1\leq i\leq m$,
$\alpha_i=b_i$ and $\mathbf h_i=\mathbf g_i$. Moreover,
$
\phi(1\otimes v)=1\otimes\tau(v)
$
for a vector space isomorphism $\tau: V \xrightarrow{\sim} V'$.
\end{claim}
Fix $1\leq i\leq m$. For $v\in V$, set
\begin{align*}
F_v(\mathbf t',\mathbf s')=\phi(1\otimes v).
\end{align*}
Applying \eqref{eq:iso-W-component} to $1\otimes v$, we obtain
\begin{equation}\label{eq:iso-F-shift}
	(t_i'-kq\alpha_i)F_v(\mathbf t',\mathbf s')
	=(t_i'-kqb_i)F_v(\mathbf t',\mathbf s'-kq\mathbf e_i).
\end{equation}
Setting $t_i'=0$ and $k\gg 0$, we get
\begin{align*}
\alpha_i F_v(\mathbf t',\mathbf s')\big|_{t_i'=0}
=b_iF_v(\mathbf t',\mathbf s'-kq\mathbf e_i)\big|_{t_i'=0}.
\end{align*}
If
$F_v(\mathbf t',\mathbf s')\big|_{t_i'=0}\neq0,$
then by comparing the highest coefficients in $s_i'$ we get
$\alpha_i=b_i$. Then if $\alpha_i\neq b_i$, we get
$F_v(\mathbf t',\mathbf s')\big|_{t_i'=0}=0$
for every $v\in V$. That is to say $F_v\in  t_i'\mathbb{C}[\mathbf t',\mathbf s']$, which implies
$\operatorname{Im}\phi\subseteq t_i'T'$,
contrary to the surjectivity of $\phi$. Therefore
$\alpha_i=b_i$.

Now \eqref{eq:iso-F-shift} reduces to
\begin{align*}
F_v(\mathbf t',\mathbf s')
=F_v(\mathbf t',\mathbf s'-kq\mathbf e_i).
\end{align*}
Since $kq\neq0$, $F_v$ is independent of $s_i'$. Repeating this
argument for every $i$, we conclude that $F_v=F_v(\mathbf t')$.

For all sufficiently large $k$,  we have
\begin{align*}
\phi\bigl(L_{k,0}(1\otimes v)\bigr)
=\sum_{r=1}^m\lambda_r^k
\bigl(s_r'+qh_{r,k}(q^{-1}t_r')\bigr)F_v,
\end{align*}
\begin{align*}
L_{k,0}\phi(1\otimes v)
=\sum_{r=1}^m\lambda_r^k
\left(\bigl(s_r'+qg_{r,k}(q^{-1}t_r')\bigr)F_v
-kq(t_r'-kq\alpha_r)\frac{\partial F_v}{\partial t_r'}\right).
\end{align*}
For each $1\leq i\leq m$, Proposition~\ref{prop4.1} gives, for all
sufficiently large integers $k$,
\begin{equation}\label{eq:iso-h-differential}
\bigl(g_{i,k}(q^{-1}t_i')-h_{i,k}(q^{-1}t_i')\bigr)F_v
=k(t_i'-kq\alpha_i)\frac{\partial F_v}{\partial t_i'}.
\end{equation}
The difference of the two sides is a polynomial in $k$ which vanishes
for infinitely many integers. It is therefore the zero polynomial, so
\eqref{eq:iso-h-differential} is valid for any $k\in\mathbb{Z}$. Fix $i$. Write
\begin{align*}
F_v=\sum_{\ell=1}^N p_\ell(t_i')u_\ell,
\end{align*}
where $p_\ell(t_i')\in\mathbb C[t_i']\setminus\{0\}$ and $u_1,\ldots,u_N$ are $\mathbb C$-linearly independent
elements of
\begin{align*}
\mathbb C[t_1',\ldots,\widehat{t_i'},\ldots,t_m']\otimes V'.
\end{align*}
Taking $k=1$ in \eqref{eq:iso-h-differential} and
comparing the coefficients of $u_\ell$, we get
\begin{align*}
\frac{p_\ell'(t_i')}{p_\ell(t_i')}(t_i'-q\alpha_i)
=g_{i,1}(q^{-1}t_i')-h_{i,1}(q^{-1}t_i').
\end{align*}
If $p_\ell$ had a root different from $q\alpha_i$, then
$(t_i'-q\alpha_i)p_\ell'/p_\ell$ would have a pole at that root, whereas
$g_{i,1}(q^{-1}t_i')-h_{i,1}(q^{-1}t_i')$ is a polynomial. It forces for some $r\in\mathbb Z_+$,
\begin{align*}
p_\ell(t_i')=c_\ell(t_i'-q\alpha_i)^r,
\end{align*}
\begin{align*}
g_{i,1}(q^{-1}t_i')-h_{i,1}(q^{-1}t_i')=r.
\end{align*}
Since $g_{i,1}(q^{-1}t_i')-h_{i,1}(q^{-1}t_i')$ is independent of
$v$ and $\ell$, the same $r$ occurs for every nonzero coefficient
of every $F_v$. According to \eqref{eq:iso-h-differential}, if $r>0$,
$\operatorname{Im}\phi\subseteq(t_i'-q\alpha_i)^rT'$, contrary to the surjectivity
of $\phi$. It follows that $r=0$ and
$
\frac{\partial F_v}{\partial t_i'}=0,h_{i,1}=g_{i,1}
$.

By \eqref{eq:hm-h1-relation}, $\mathbf h_i=\mathbf g_i$. Repeating
this argument for every $i$, for a vector space isomorphism $\tau:V\to V'$, we obtain
$\phi(1\otimes v)=1\otimes\tau(v)$.
\begin{claim}
If $q=-1$, then $\beta_i=\beta'_i$ for $1\leq i\leq m$.
\end{claim}
Assume that $q=-1$. For all sufficiently large $k$,
\begin{align*}
\phi\bigl(W_{k,1}(1\otimes v)\bigr)
=\sum_{r=1}^m\lambda_r^k\beta_{1,r}F_v,\qquad
W_{k,1}\phi(1\otimes v)
=\sum_{r=1}^m\lambda_r^k\beta'_{1,r}F_v.
\end{align*}
Proposition~\ref{prop4.1} gives
$\beta_{1,i}F_v=\beta'_{1,i}F_v$ for each $1\leq i\leq m$, hence $\beta_{1,i}=\beta'_{1,i}$. Since
$\mathbf h_i=\mathbf g_i$, we also have
$\Theta_{\mathbf h_i,\beta_{1,i}}
=\Theta_{\mathbf g_i,\beta'_{1,i}}$.

For all sufficiently large $k$ we obtain
\begin{align*}
\phi\bigl(L_{k,1}(1\otimes v)\bigr)
&=\sum_{r=1}^m\lambda_r^k
\bigl(\beta_{2,r}+k\Theta_{\mathbf h_r,\beta_{1,r}}(t_r')\bigr)F_v,\\
L_{k,1}\phi(1\otimes v)
&=\sum_{r=1}^m\lambda_r^k
\bigl(\beta'_{2,r}+k\Theta_{\mathbf g_r,\beta'_{1,r}}(t_r')\bigr)F_v.
\end{align*}
By Proposition~\ref{prop4.1}, for each $1\leq i\leq m$,
\begin{align*}
\bigl(\beta_{2,i}+k\Theta_{\mathbf h_i,\beta_{1,i}}(t_i')\bigr)F_v
=\bigl(\beta'_{2,i}+k\Theta_{\mathbf g_i,\beta'_{1,i}}(t_i')\bigr)F_v.
\end{align*}
Therefore $\beta_{2,i}=\beta'_{2,i}$, thus
$\beta_i=\beta'_i$.

\begin{claim}
If $-2q\notin\mathbb N$, $V'\cong V$ as $\mathcal B(q)$-modules; if
$-2q\in\mathbb N$,
$V'\cong V^{\Gamma(T)-\Gamma(T')}$ as
$\mathcal B(q)$-modules.
\end{claim}
For $x\in[\mathcal B(q),\mathcal B(q)]$, Claims 1--4 give
\begin{align*}
0=\phi\bigl(x(1\otimes v)\bigr)-x\phi(1\otimes v)
=1\otimes\bigl(\tau(xv)-x\tau(v)\bigr).
\end{align*}
Then $\tau(xv)=x\tau(v)$ for all
$x\in[\mathcal B(q),\mathcal B(q)]$ and $v\in V$.
If $-2q\notin\mathbb N$, $[\mathcal B(q),\mathcal B(q)]=\mathcal B(q)$, then we have $V'\cong V$. Now let
$-2q\in\mathbb N$.
Applying $W_{0,-2q}$ and $L_{0,-2q}$ to $\tau(v)$, we can see
\begin{align*}
W_{0,-2q}\tau(v)
=\tau(W_{0,-2q}v)
+\left(\sum_{i=1}^m\gamma_{1,i}
-\sum_{i=1}^m\gamma'_{1,i}\right)\tau(v),
\end{align*}
\begin{align*}
L_{0,-2q}\tau(v)
=\tau(L_{0,-2q}v)
+\left(\sum_{i=1}^m\gamma_{2,i}
-\sum_{i=1}^m\gamma'_{2,i}\right)\tau(v).
\end{align*}
It follows that $V'\cong V^{\Gamma(T)-\Gamma(T')}$.

Conversely, after reordering the tensor factors of $T'$, the result
follows directly.
\end{proof}

\section{Applications to subalgebras of $\mathcal B(-1)$}
In this section, we consider applications of the preceding results to subalgebras of $\overline{\mathcal B}(-1)$. By Theorem~\ref{thm3.4}, $C_1$ and $C_2$ act trivially on every module in $\mathcal F_1(\mathcal B(-1))$. Therefore if $\mathfrak a$ is a Lie subalgebra of $\overline{\mathcal B}(-1)$, the quotient map and the inclusion $\mathfrak a\hookrightarrow\overline{\mathcal B}(-1)$ induce the restriction functor
\begin{align*}
\operatorname{Res}^{\mathcal B(-1)}_{\mathfrak a}:
\mathcal B(-1)\text{-}\mathrm{Mod}\longrightarrow
\mathfrak a\text{-}\mathrm{Mod},
\qquad M\longmapsto M|_{\mathfrak a}.
\end{align*}
\subsection{The Heisenberg--Virasoro subalgebra $\mathcal{H}$ and its Takiff Lie algebra}

The centerless Heisenberg--Virasoro algebra $\mathcal{H}$ is the Lie algebra
with basis
$
\{L_m,I_m\mid m\in\mathbb Z\}
$
subject to the following Lie brackets:
\begin{equation*}
\begin{aligned}
[L_m,L_n]=(n-m)L_{m+n},\
[L_m,I_n]=nI_{m+n},\
[I_m,I_n]=0.
\end{aligned}
\end{equation*}
It is isomorphic to the subalgebra $\mathcal{s}=\operatorname{span}\{-L_{m,0},\,L_{m,1}\mid m\in\mathbb Z\}$ of $\overline{\mathcal B}(-1)$ under the identification
\begin{align*}
L_m\mapsto -L_{m,0},\qquad I_m\mapsto L_{m,1}.
\end{align*}
The Takiff Lie algebra of $\mathcal{H}$ is defined by
$
\widetilde{\mathcal{H}}:=\mathcal{H}\otimes \mathbb C[x]/(x^2),$ 
with Lie bracket
\begin{align*}
[a\otimes x^i,\ b\otimes x^j]=[a,b]\otimes x^{i+j},
\qquad a,b\in\mathcal H,\ i,j\in\{0,1\}.
\end{align*}

It is isomorphic to the subalgebra $\overline{\mathcal{s}}=\operatorname{span}\{-L_{m,0},\,L_{m,1},\,-W_{m,0},\,W_{m,1}\mid m\in\mathbb Z\}$
of $\overline{\mathcal B}(-1)$ under the identification
\begin{align*}
L_m\mapsto -L_{m,0},\qquad I_m\mapsto L_{m,1},\qquad
\overline{L}_m\mapsto -W_{m,0},\qquad \overline{I}_m\mapsto W_{m,1},
\end{align*}
which is exactly the BMS-Kac-Moody algebras.
The above embeddings give restriction functors
\begin{align*}
\mathcal F_1(\mathcal B(-1))
\xrightarrow{\operatorname{Res}^{\mathcal B(-1)}_{\widetilde{\mathcal H}}}
\widetilde{\mathcal H}\text{-}\mathrm{Mod}
\xrightarrow{\operatorname{Res}^{\widetilde{\mathcal H}}_{\mathcal H}}
\mathcal H\text{-}\mathrm{Mod}.
\end{align*}
Their composite satisfies
\begin{align*}
\operatorname{Res}^{\mathcal B(-1)}_{\mathcal H}
=
\operatorname{Res}^{\widetilde{\mathcal H}}_{\mathcal H}
\circ
\operatorname{Res}^{\mathcal B(-1)}_{\widetilde{\mathcal H}}.
\end{align*}
Restriction functor does not preserve irreducibility in general, so the irreducibility of the resulting modules must be considered separately.

\begin{definition}\label{def5.1}
For $\lambda\in\mathbb C^*$ and $\alpha,\beta_1,\beta_2\in\mathbb C$, $\mathbf{h}=\{h_{n}(t)\mid n\in\mathbb{Z}\}\in\mathcal{T}_{\alpha}$, let
$\Psi(\lambda,\alpha,\beta_1,\beta_2,\mathbf h)=\mathbb C[t,s]$.

\begin{enumerate}
    \item The $\mathcal{H}$-module structure on $\Psi(\lambda,\alpha,\beta_1,\beta_2,\mathbf h)$ is defined by
    \begin{align*}
	         L_{m} f(t,s)=&-\lambda^{m}(s- h_{m}(-t))f(t,s+m)-m\lambda^{m}(t+m \alpha)\frac{\partial}{\partial t}f(t,s+m),\\
	     I_{m} f(t,s)=& \lambda^{m}\bigl(\beta_{2}+m\Theta_{\mathbf h,\beta_1}(t)\bigr)f(t,s+m)
	     +m\lambda^{m}\beta_{1}\frac{\partial}{\partial t}f(t,s+m).
	    \end{align*}

    \item The $\widetilde{\mathcal{H}}$-module structure on $\Psi(\lambda,\alpha,\beta_1,\beta_2,\mathbf h)$ is defined by the above $\mathcal{H}$-action together with
     \begin{align*}
	        \overline{L}_m f(t,s)=-\lambda^{m}(t+m\alpha)f(t,s+m),\ 
	        \overline{I}_m f(t,s)=\lambda^{m}\beta_{1}f(t,s+m)
	    \end{align*}
\end{enumerate}
\end{definition}
Applying restriction functors to the modules defined by \eqref{C3.1}--\eqref{W3.3}, we get
\begin{align*}
\operatorname{Res}^{\mathcal B(-1)}_{\widetilde{\mathcal H}}
\Omega\big(\lambda,\alpha,\beta,\gamma,\mathbf h(\alpha)\big)
\cong
\Psi(\lambda,\alpha,\beta_1,\beta_2,\mathbf h(\alpha)).
\end{align*}
From Theorems \ref{thm3.5}, \ref{prop3.2}, \ref{thm4.6} and \ref{thm:4.8}, it is easy to see that the $\widetilde{\mathcal{H}}$-module $\Psi(\lambda,\alpha,\beta_1,\beta_2,\mathbf h)$ has the following results, which also given in \cite{CG26} and \cite{CG27}.
\begin{theorem}\label{thm5.2}
For the $\widetilde{\mathcal{H}}$-module $\Psi(\lambda,\alpha,\beta_1,\beta_2,\mathbf h)$, we have the following:
\begin{enumerate}
    \item $\Psi(\lambda_{1},\alpha_{1},\beta_{1},\beta_{2},\mathbf{h})\cong\Psi(\lambda_{2},\alpha_{2},\beta_{1}',\beta_{2}',\mathbf{g})$ if and only if
   $ \lambda_{1}=\lambda_{2},\,
    \alpha_{1}=\alpha_{2},\,
    \beta_{1}=\beta_{1}',\,
    \beta_{2}=\beta_{2}'$ and $
    \mathbf{h}=\mathbf{g}$.
    \item $\Psi(\lambda,\alpha,\beta_{1},\beta_{2},\mathbf{h})$ irreducible if and only if
    $
    \alpha\neq 0 \, \text{or} \, \beta_{1}\neq 0.
    $
    \item $\bigotimes_{i=1}^{m}\Psi(\lambda_{i},\alpha_{i},\beta_{1,i},\beta_{2,i},\mathbf{h}_{i})\otimes V$
    is irreducible if and only if $\lambda_{1},\ldots,\lambda_{m}$ are pairwise distinct,
    $\alpha_{i}\neq 0$ or $\beta_{1,i}\neq 0$ for each $i$, and $V$ is an irreducible
    restricted $\widetilde{\mathcal H}$-module.
\end{enumerate}
\end{theorem}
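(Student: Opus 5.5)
Every statement about $\Psi$ will be transferred from the $\mathcal B(-1)$-module $\Omega(\lambda,\alpha,\beta,\gamma,\mathbf h)$ via the isomorphism
$\operatorname{Res}^{\mathcal B(-1)}_{\widetilde{\mathcal H}}\Omega(\lambda,\alpha,\beta,\gamma,\mathbf h)\cong\Psi(\lambda,\alpha,\beta_1,\beta_2,\mathbf h)$.
The key observation is that on $\Omega$ the only generators of $\overline{\mathcal B}(-1)$ outside $\overline{\mathfrak s}$ that act nontrivially are $L_{0,2}$ and $W_{0,2}$, and they act by the scalars $\gamma_2,\gamma_1$. The same holds on the tensor products appearing in Theorem~\ref{thm4.6}, where they act on each factor of a pure tensor and on $V$ by the operators $L_{0,2},W_{0,2}$. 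Hence the $\widetilde{\mathcal H}$-submodules and the $\mathcal B(-1)$-submodules of $\Omega$ coincide, and the $\widetilde{\mathcal H}$-homomorphisms between two such modules with equal $\gamma$ are the same as the $\mathcal B(-1)$-homomorphisms. Fixing $\gamma=0$ throughout then reduces everything to the earlier theorems.

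For (2), irreducibility of $\Psi$ is the irreducibility of $\Omega(\lambda,\alpha,\beta,0,\mathbf h)$ over $\mathcal B(-1)$. Theorem~\ref{thm3.5} with $q=-1$ gives the condition $\alpha\ne0$ or $\beta_1\neq0$. For (1), the \emph{if} direction is trivial. For the \emph{only if} direction, an $\widetilde{\mathcal H}$-isomorphism $\Psi\to\Psi'$ is also a $\mathcal B(-1)$-isomorphism between $\Omega(\cdot,\beta,0,\cdot)$ and $\Omega(\cdot,\beta',0,\cdot)$, because $L_{0,2}$ and $W_{0,2}$ act as $0$ on both. Theorem~\ref{prop3.2} then forces equality of all the parameters. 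One point to verify is that $\mathcal W\subseteq\overline{\mathfrak s}$ up to signs, since $L_{m,0}$ and $W_{m,0}$ occur in $\overline{\mathfrak s}$ with sign changes only; so the part of the proof of Theorem~\ref{prop3.2} that uses $\mathcal W$ goes through unchanged.

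For (3), I would redo the argument of Theorems~\ref{thm4.2}, \ref{thm4.3} and the reducibility remarks preceding Theorem~\ref{thm4.6}, now for $\widetilde{\mathcal H}$ instead of $\mathcal B(-1)$. Those proofs use only the operators $W_{k,0}$, $L_{k,0}$, $W_{k,1}$ and $L_{k,1}$ for $k\gg0$, Proposition~\ref{prop4.1}, and the irreducibility of $V$. All of these operators lie in $\overline{\mathfrak s}$, and the restrictedness of $V$ is defined through the same $\mathbb Z$-grading. The sufficiency direction therefore carries over verbatim with $V$ an irreducible restricted $\widetilde{\mathcal H}$-module. I do not need to extend $V$ to $\mathcal B(-1)$ at all, since the step $V'\subseteq V$ in Theorem~\ref{thm4.2} only uses the action of $\widetilde{\mathcal H}$.

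For necessity in (3), each condition is handled separately:
\begin{enumerate}
\item If $\alpha_i=\beta_{1,i}=0$, then $t_i\mathbb C[t_i,s_i]$ is a proper $\widetilde{\mathcal H}$-submodule of the $i$th factor, and tensoring it with the other factors gives a proper submodule.
\item If $V$ is reducible, a proper submodule of $V$ tensored with all the factors gives a proper submodule.
\item If $\lambda_i=\lambda_j$ for some $i\neq j$, the submodule $N_n$ of the preceding theorem, built in the variables of the two factors and tensored with the rest, is proper. That proof only used the $W_{k,0}$, $L_{k,0}$ and $i=1$ actions, which all lie in $\widetilde{\mathcal H}$.
\end{enumerate}

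The main obstacle I expect is checking that nothing in the simplicity proofs secretly used generators outside $\overline{\mathfrak s}$. This includes the $L_{0,2}$ and $W_{0,2}$ scalars, which are harmless, and the case of $N_n$ with $\beta_{1,i}\ne0$, where the $i=1$ actions shift $s_i\mapsto s_i+k$ in both factors simultaneously. For the latter I would recheck that the $x$-degree, with $x=s_1$ and $y=s_1+s_2$, is preserved by $L_{k,1}$ and $W_{k,1}$. These operators act as $\lambda^k(\text{function of }t_1,t_2,\partial_t)$ composed with the shift $(x,y)\mapsto(x+k,y+2k)$, and this shift does not raise the $x$-degree.
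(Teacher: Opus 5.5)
Your proposal is correct. For (1) and (2) it matches the paper. The paper's entire proof is the remark that the statement follows from Theorems~\ref{thm3.5}, \ref{prop3.2}, \ref{thm4.6} and \ref{thm:4.8} via $\operatorname{Res}^{\mathcal B(-1)}_{\widetilde{\mathcal H}}$. Your observation that outside $\overline{\mathfrak s}$ only $L_{0,2}=\gamma_2$ and $W_{0,2}=\gamma_1$ act nontrivially on $\Omega$, so that one may take $\gamma=0$, is what makes that transfer rigorous.

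For (3) you take a different route. Instead of citing Theorem~\ref{thm4.6}, you rerun Theorems~\ref{thm4.2}, \ref{thm4.3} and the reducibility arguments over $\widetilde{\mathcal H}$. This is sound: those proofs only use $W_{k,0},L_{k,0},W_{k,1},L_{k,1}$ for $k\gg0$, Proposition~\ref{prop4.1}, and the action on $V$. It also handles a point the paper's citation passes over: an irreducible restricted $\widetilde{\mathcal H}$-module $V$ is not a priori a $\mathcal B(-1)$-module.

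A shorter fix makes the paper's citation literal. For $q=-1$ one has $[L_{m,1},L_{n,1}]=[L_{m,1},W_{n,1}]=0$. Hence $J=\operatorname{span}\{L_{m,i},W_{m,i}\mid i\ge2\}$ is an ideal of $\overline{\mathcal B}(-1)$ with $\overline{\mathcal B}(-1)/J\cong\overline{\mathfrak s}\cong\widetilde{\mathcal H}$. Inflating $V$ along this quotient gives a restricted $\mathcal B(-1)$-module with the same submodules. With all $\gamma_i=0$, $J$ (and $C_1,C_2$) kills the whole tensor product, so its $\widetilde{\mathcal H}$- and $\mathcal B(-1)$-submodules coincide and Theorem~\ref{thm4.6} applies verbatim. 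Your route is self-contained at the cost of re-verification; the inflation route reduces (3) to a citation.

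There are two small slips, neither affecting the argument.
\begin{enumerate}
\item You claim that on the tensor products of Theorem~\ref{thm4.6} only $L_{0,2},W_{0,2}$ act nontrivially outside $\overline{\mathfrak s}$. This is false for a general $\mathcal B(-1)$-module $V$, on which the $L_{m,i},W_{m,i}$ with $i\ge2$ may act nontrivially. You never use it, so drop it.
\item In the $N_n$ check, $W_{k,1}$ and $L_{k,1}$ do not act through a simultaneous shift $(x,y)\mapsto(x+k,y+2k)$. On a tensor product they act as a sum of two terms. One involves $s_1\mapsto s_1+k$, i.e.\ $(x,y)\mapsto(x+k,y+k)$; the other involves $s_2\mapsto s_2+k$, i.e.\ $(x,y)\mapsto(x,y+k)$. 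Each is followed by multiplication by a polynomial in $t_1$ or $t_2$ and possibly $\partial_{t_1}$ or $\partial_{t_2}$. Each of these preserves $\deg_x$, so $N_n$ is still an $\widetilde{\mathcal H}$-submodule.
\end{enumerate}
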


\subsection{Admissible tensor product modules of $\mathcal{H}$}
Although $\Psi(\lambda,\alpha,\beta_1,\beta_2,\mathbf h)$ is naturally obtained by restriction, its irreducibility as an $\mathcal{H}$-module cannot be deduced directly from the previous results. We therefore restrict to the admissible case and construct a family of admissible tensor product modules.
\begin{definition}
    An $\mathcal{H}$-module $M$ is called \textbf{admissible} provided that
\begin{align*}
I_{m+n}v = I_{m}I_{n}v
\end{align*}
for all $m,n \in \mathbb{Z}$ and $v \in M$.
\end{definition}
A direct computation shows the following result.
\begin{proposition}
    $\Psi(\lambda,\alpha,\beta_{1},\beta_{2},\mathbf{h})$ is an admissible $\mathcal{H}$-module if and only if $\beta_{1}=0$ and $\beta_{2}\in\{0,1\}$.
\end{proposition}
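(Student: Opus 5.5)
The plan is to compute the composite $I_mI_n$ directly from Definition~\ref{def5.1} and compare it with $I_{m+n}$. Write $\Theta=\Theta_{\mathbf h,\beta_1}$ and let $D_m=\beta_2+m\Theta(t)+m\beta_1\partial_t$. Then $I_m f(t,s)=\lambda^m (D_m f)(t,s+m)$. The operator $D_m$ acts only in $t$, while the shift acts only in $s$, so the two commute. This gives
\begin{align*}
I_mI_nf(t,s)=\lambda^{m+n}\,(D_mD_nf)(t,s+m+n),\qquad
I_{m+n}f(t,s)=\lambda^{m+n}\,(D_{m+n}f)(t,s+m+n).
\end{align*}
Since the shift is injective, admissibility is equivalent to the operator identity $D_mD_n=D_{m+n}$ on $\mathbb C[t]$ (with $s$ treated as a parameter) for all $m,n\in\mathbb Z$.

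\emph{Necessity.} I would apply both sides to suitable test polynomials and compare. Taking $m=n=0$ gives $\beta_2^2=\beta_2$, so $\beta_2\in\{0,1\}$. Next I compare the second-order parts of $D_mD_n$: expanding, the coefficient of $\partial_t^2$ is $mn\beta_1^2$. The right-hand side $D_{m+n}$ has order at most one, so $m=n=1$ forces $\beta_1^2=0$, hence $\beta_1=0$. In practice I will apply both operators to $f=t^2$ and compare the constant-in-$\Theta$ terms, or simply note that a differential operator with polynomial coefficients is determined by its action on $\mathbb C[t]$.

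\emph{Sufficiency.} If $\beta_1=0$, then $\Theta_{\mathbf h,0}=0$ by its definition (it carries the factor $c=\beta_1$). Hence $D_m=\beta_2$ for all $m$, and the required identity reduces to $\beta_2^2=\beta_2$, which holds for $\beta_2\in\{0,1\}$.

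The only potential obstacle is making the necessity step clean when $\beta_1\neq0$ but $\Theta$ is nonzero. The first-order and zeroth-order terms of $D_mD_n$ also contain products such as $mn\Theta^2$ and $mn\beta_1\Theta'$, so one might worry about cancellations. The $\partial_t^2$ coefficient $mn\beta_1^2$ is unaffected by these terms, however, so comparing the leading symbols suffices. I expect no real difficulty beyond this bookkeeping.
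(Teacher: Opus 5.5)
Your proposal is correct and takes essentially the paper's route: the paper only says that ``a direct computation shows'' the result. Your reduction of admissibility to the operator identity $D_mD_n=D_{m+n}$ on $\mathbb C[t]$ is that computation written out. It gives $\beta_2^2=\beta_2$ from $m=n=0$ and $\beta_1=0$ from the $\partial_t^2$-coefficient $mn\beta_1^2$, with sufficiency following from $\Theta_{\mathbf h,0}=0$. For the coefficient comparison, justify it by the faithfulness of polynomial differential operators on $\mathbb C[t]$, or by testing on $1,t,t^2$, rather than by the vaguer ``constant-in-$\Theta$'' comparison you mention.
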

When $\beta_2=0$, all $I_m$ act trivially. In the following, we
consider the nontrivial admissible case $\beta_1=0$ and $\beta_2=1$.

Define two subalgebras of $\mathcal{H}$ by $Q=\operatorname{span}\{L_{m},I_{n}\mid m\geq -1,n\geq 0\}$ and $P=\operatorname{span}\{L_{m},I_{n}\mid m\geq 0,n\geq 0\}$. Let $V$ be a $Q$-module such that for any $v\in V$, $L_{m}v=0, I_{n}v=0$ for all but finitely many of $m\geq -1$ and $n\geq0$.

 Define an $\mathcal{H}$-action on the vector space $T(V,\Psi)=V\otimes \Psi(\lambda,\alpha,0,1,\mathbf{h})$ as follows. For $m\in \mathbb{Z}$, $v\in V,f(t,s)\in\mathbb{C}[t,s]$,
\begin{align}
    L_{m}(v\otimes f(t,s))=&v\otimes L_{m} f(t,s)+(e^{mt}-1)\frac{d}{dt}v\otimes I_{m} f(t,s),\label{5.1}\\
    I_{m}(v\otimes f(t,s))=&e^{mt}v\otimes I_{m}f(t,s),\label{5.2}
\end{align}
where $e^{mt}=\sum_{i=0}^{\infty}\frac{m^{i}}{i!}I_{i}$, $(e^{mt}-1)\frac{d}{dt}=\sum_{i=0}^{\infty}\frac{m^{i}}{i!}L_{i-1}-L_{-1}$. Set $g(m)=(e^{mt}-1)\frac{d}{dt}$, $h(m)=e^{mt}$, then it follows that $[g(m),h(n)]=nh(m+n)-nh(n)$ and $[g(m),g(n)]=mg(m)-ng(n)+(n-m)g(m+n)$.
\begin{proposition}
    $T(V,\Psi)$ is an $\mathcal{H}$-module under the actions \eqref{5.1} and \eqref{5.2}.
\end{proposition}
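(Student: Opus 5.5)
Throughout, set $g(m)=(e^{mt}-1)\frac{d}{dt}$ and $h(m)=e^{mt}$, so that $L_m$ acts on $T(V,\Psi)$ by $L_m\otimes 1+g(m)\otimes I_m$ and $I_m$ acts by $h(m)\otimes I_m$. The plan has four steps.

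\textbf{Well-definedness.} For fixed $v\in V$ only finitely many $L_{i-1}v$ and $I_iv$ are nonzero, so $g(m)v$ and $h(m)v$ are finite sums.

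\textbf{Algebraic input.} Next I will record the properties of $\Psi=\Psi(\lambda,\alpha,0,1,\mathbf h)$ that are needed. With $\beta_1=0$ and $\beta_2=1$ we have $\Theta_{\mathbf h,0}=0$, so $I_mf(t,s)=\lambda^m f(t,s+m)$. Consequently $I_mI_n=I_{m+n}$ and $I_0=\mathrm{id}$ on $\Psi$. I will also use the identities $[g(m),h(n)]=n\,h(m+n)-n\,h(n)$ and $[g(m),g(n)]=m\,g(m)-n\,g(n)+(n-m)g(m+n)$ in $U(Q)$ (completed, acting on $V$), together with $h(m)h(n)=h(m+n)$ and $h(0)=1$; the latter hold because the $I_i$ commute, so $h(m)=\exp(\sum_i m^i I_i/i!)$ formally behaves like $e^{mt}$. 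I would verify these identities quickly via the realization $L_k\mapsto t^{k+1}\frac{d}{dt}$, $I_k\mapsto t^k$ of $Q$, transferring them to $Q$ as identities of generating series in $m$ and $n$.

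\textbf{Bracket checks.}
\begin{itemize}
\item $[I_m,I_n]=0$ is immediate, since $h(m)h(n)\otimes I_mI_n$ is symmetric in $m$ and $n$.
\item For $[L_m,I_n]=nI_{m+n}$, expanding gives
\[
[L_m,I_n]=h(n)\otimes[L_m,I_n]_\Psi+[g(m),h(n)]\otimes I_mI_n+h(n)g(m)\otimes(I_mI_n-I_nI_m).
\]
The last term vanishes. Using $[L_m,I_n]_\Psi=nI_{m+n}$ and $I_mI_n=I_{m+n}$, the sum becomes $nh(n)\otimes I_{m+n}+(nh(m+n)-nh(n))\otimes I_{m+n}=nh(m+n)\otimes I_{m+n}$, which is the required $nI_{m+n}$.
\item For $[L_m,L_n]$, expanding gives
\[
1\otimes[L_m,L_n]_\Psi+g(n)\otimes[L_m,I_n]_\Psi-g(m)\otimes[L_n,I_m]_\Psi+[g(m),g(n)]\otimes I_{m+n}.
\]
Since $[L_m,I_n]_\Psi=nI_{m+n}$, this equals $1\otimes(n-m)L_{m+n}+\bigl(ng(n)-mg(m)+[g(m),g(n)]\bigr)\otimes I_{m+n}$. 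By the identity for $[g(m),g(n)]$, the coefficient of $I_{m+n}$ is $(n-m)g(m+n)$, so the whole expression is $(n-m)L_{m+n}$ acting on $T(V,\Psi)$, as required.
\end{itemize}

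\textbf{Main obstacle.} The main obstacle is justifying the two commutator identities for $g$ and $h$ rigorously inside $U(Q)$ acting on $V$, including the finiteness of all sums involved. I would first compute the coefficient of $m^a n^b/(a!\,b!)$ in each identity directly from the brackets $[L_{a-1},I_b]=bI_{a+b-1}$ and $[L_{a-1},L_{b-1}]=(b-a)L_{a+b-2}$, and then match these coefficients with the binomial expansions of the right-hand sides. This comparison is the combinatorial identity $\sum\binom{a+b-1}{a}$-type computation that mirrors the derivation of $[e^{mt}\frac{d}{dt},e^{nt}]=ne^{(m+n)t}$.
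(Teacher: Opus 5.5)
Your proposal is correct and follows the paper's proof. Both expand each bracket on $v\otimes f$, use $I_mI_n=I_nI_m=I_{m+n}$ on $\Psi(\lambda,\alpha,0,1,\mathbf h)$, and absorb the cross terms with the identities for $[g(m),h(n)]$ and $[g(m),g(n)]$. The paper only asserts these two identities; your coefficientwise check, using the finiteness hypothesis on $V$, would actually justify them.

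One side claim should be deleted. $h(m)=\sum_i\frac{m^i}{i!}I_i$ is linear in the $I_i$, not an exponential of them. So $h(m)h(n)=h(m+n)$ and $h(0)=1$ (note $h(0)=I_0$) are false for a general $Q$-module $V$, which is not assumed admissible. The realization $L_k\mapsto t^{k+1}\frac{d}{dt}$, $I_k\mapsto t^k$ certifies identities inside the Lie algebra $Q$, such as the two commutator identities, but not product identities in $U(Q)$. You never use these false identities, since $[I_m,I_n]=0$ only needs $h(m)h(n)=h(n)h(m)$.

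Also, $L_m$ acts by $1\otimes L_m+g(m)\otimes I_m$, not $L_m\otimes 1+g(m)\otimes I_m$. Your later computation correctly uses the former.
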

\begin{proof}
 For any $m,n\in\mathbb{Z}$ and $f\in\mathbb{C}[t,s]$, we have 
\begin{align*}
&(L_{m}L_{n}-L_{n}L_{m})(v\otimes f)\\
=&L_{m}(v\otimes L_{n}f+(e^{nt}-1)\tfrac{d}{dt}v\otimes I_{n}f)-L_{n}(v\otimes L_{m}f+(e^{mt}-1)\tfrac{d}{dt}v\otimes I_{m}f)\\
=&v\otimes L_{m}L_{n}f+(e^{mt}-1)\tfrac{d}{dt}v\otimes I_{m}L_{n}f+(e^{nt}-1)\tfrac{d}{dt}v\otimes L_{m}I_{n}f\\
&+(e^{mt}-1)\tfrac{d}{dt}(e^{nt}-1)\tfrac{d}{dt}v\otimes I_{m}I_{n}f-v\otimes L_{n}L_{m}f-(e^{nt}-1)\tfrac{d}{dt}v\otimes I_{n}L_{m}f\\
&-(e^{mt}-1)\tfrac{d}{dt}v\otimes L_{n}I_{m}f-(e^{nt}-1)\tfrac{d}{dt}(e^{mt}-1)\tfrac{d}{dt}v\otimes I_{n}I_{m}f\\
=&v\otimes(n-m)L_{m+n}f-m(e^{mt}-1)\tfrac{d}{dt}v\otimes I_{m+n}f+n(e^{nt}-1)\tfrac{d}{dt}v\otimes I_{m+n}f\\
&+(e^{mt}-1)\tfrac{d}{dt}(e^{nt}-1)\tfrac{d}{dt}v\otimes I_{m+n}f-(e^{nt}-1)\tfrac{d}{dt}(e^{mt}-1)\tfrac{d}{dt}v\otimes I_{m+n}f
\\=&(n-m)L_{m+n}(v\otimes f).
\end{align*}
Similarly, we get 
\begin{align*}
    &(L_{m}I_{n}-I_{n}L_{m})(v\otimes f)\\
    =&L_{m}(e^{nt}v\otimes I_{n}f)-I_{n}(v\otimes L_{m}f+(e^{mt}-1)\tfrac{d}{dt}v\otimes I_{m}f)\\
    =&e^{nt}v\otimes L_{m}I_{n}f+(e^{mt}-1)\tfrac{d}{dt}e^{nt}v\otimes I_{m}I_{n}f
    \\&-e^{nt}v\otimes I_{n}L_{m}f-e^{nt}(e^{mt}-1)\tfrac{d}{dt}v\otimes I_{n}I_{m}f\\
    =&ne^{(m+n)t}v\otimes I_{m}I_{n}f=nI_{m+n}(v\otimes f)
\end{align*}
and 
\begin{align*}
    &(I_{m}I_{n}-I_{n}I_{m})(v\otimes f)\\
    =&I_{m}(e^{nt}v\otimes I_{n}f)-I_{n}(e^{mt}v\otimes I_{m}f)\\
    =&e^{mt}e^{nt}v\otimes I_{m}I_{n}f-e^{nt}e^{mt}v\otimes I_{n}I_{m}f\\
    =&[e^{mt},e^{nt}]v\otimes I_{m+n}f=0.
\end{align*}
The proposition holds.
\end{proof}
\begin{lemma}\label{lem:admissible-shift}
For any $i\in \mathbb Z_+$, we have 
$
e^{mt}L_{-1}^i=(L_{-1}-m)^i e^{mt}.
$
Consequently,
\begin{align*}
\Bigl((e^{mt}-1)\frac{d}{dt}\Bigr)L_{-1}^i
=
(L_{-1}-m)^i e^{mt}\frac{d}{dt}-L_{-1}^{i+1}.
\end{align*}
\end{lemma}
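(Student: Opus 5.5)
The identity $e^{mt}L_{-1}^i=(L_{-1}-m)^i e^{mt}$ is an equality of operators on the $Q$-module $V$, where $e^{mt}=\sum_{j\ge0}\frac{m^j}{j!}I_j$. This sum acts as a finite sum on each vector, since $I_jv=0$ for all but finitely many $j$. Because every side of the identity is a finite combination of products of $Q$-elements acting on $V$, it suffices to prove it in $U(Q)$, or rather in a suitable completion where the infinite sums make sense. We work termwise, and only at the end note that on each $v\in V$ all sums are finite.

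The base case $i=1$ is the commutation relation $[e^{mt},L_{-1}]=-m\,e^{mt}$. In $\mathcal H$ we have $[L_{-1},I_j]=jI_{j-1}$, hence $[I_j,L_{-1}]=-jI_{j-1}$. Therefore
\begin{align*}
[e^{mt},L_{-1}]=\sum_{j\ge1}\frac{m^j}{j!}(-j)I_{j-1}=-m\sum_{j\ge1}\frac{m^{j-1}}{(j-1)!}I_{j-1}=-m\,e^{mt}.
\end{align*}
This gives $e^{mt}L_{-1}=L_{-1}e^{mt}-m e^{mt}=(L_{-1}-m)e^{mt}$. The only point needing care is that reindexing the infinite sum is legitimate when applied to a fixed $v\in V$: both $L_{-1}v$ and $v$ are killed by $I_j$ for $j\gg0$, so all sums truncate.

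The induction step is immediate:
\begin{align*}
e^{mt}L_{-1}^{i+1}=(L_{-1}-m)^ie^{mt}L_{-1}=(L_{-1}-m)^{i+1}e^{mt},
\end{align*}
using the inductive hypothesis and then the base case.

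For the consequence, the key step is to express $(e^{mt}-1)\frac{d}{dt}$ formally as $e^{mt}L_{-1}-L_{-1}$. Indeed $\sum_{j\ge0}\frac{m^j}{j!}L_{j-1}$ plays the role of $e^{mt}\frac{d}{dt}$, with $L_{j-1}\leftrightarrow t^j\frac{d}{dt}$ and $I_j\leftrightarrow t^j$. I expect this to be the main obstacle, because in $\mathcal H$ the product $I_jL_{-1}$ is not $L_{j-1}$. So I would read the displayed consequence as the paper's shorthand: $(e^{mt}-1)\frac{d}{dt}L_{-1}^i$ means $e^{mt}\frac{d}{dt}L_{-1}^i-L_{-1}^{i+1}$, with $\frac{d}{dt}=L_{-1}$ multiplied on the left by $e^{mt}$. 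Then applying the first identity with exponent $i$ to $e^{mt}L_{-1}^{i}$ (keeping $\frac{d}{dt}=L_{-1}$ as the rightmost-moved factor, written $e^{mt}\frac{d}{dt}$) yields
\begin{align*}
(L_{-1}-m)^ie^{mt}\frac{d}{dt}-L_{-1}^{i+1}.
\end{align*}
If a rigorous version in terms of the $L_{j-1}$ is needed, I would instead prove directly $[\sum_j\frac{m^j}{j!}L_{j-1},L_{-1}]=-m\sum_j\frac{m^j}{j!}L_{j-1}+mL_{-1}$ term by term, using $[L_{j-1},L_{-1}]=-jL_{j-2}$. The two correction terms $mL_{-1}$ arising from $j=0$ then cancel against the subtracted $L_{-1}$, and the same induction as above gives the formula.
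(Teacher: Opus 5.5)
Your proof of the first identity is correct and follows the paper's argument: the base case comes from $I_jL_{-1}=L_{-1}I_j-jI_{j-1}$ plus reindexing, followed by induction. Your remark that all sums truncate on each vector is a useful addition.

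The gap is in the second identity. In the paper, $e^{mt}\frac{d}{dt}$ is not the product $e^{mt}\cdot L_{-1}$ in $U(Q)$. It is defined as $E:=\sum_{j\ge0}\frac{m^j}{j!}L_{j-1}$, with $(e^{mt}-1)\frac{d}{dt}=E-L_{-1}$. This is the meaning used in the action \eqref{5.1} and in the later submodule theorem, where $e^{mt}\frac{d}{dt}$ is expanded as $L_{-1}+\sum_{i\ge1}\frac{m^i}{i!}L_{i-1}$. Your main argument reads $e^{mt}\frac{d}{dt}$ as $e^{mt}L_{-1}$, which is a different element. Already for $m=0$, the paper's $(e^{mt}-1)\frac{d}{dt}$ is $0$, while yours is $I_0L_{-1}-L_{-1}$. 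So you prove a different identity from the one the lemma asserts and the paper later uses. As you yourself observe, $I_jL_{-1}\neq L_{j-1}$, so the second identity cannot be obtained by substituting into the first.

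Your fallback is the right idea, but the commutator you state is wrong. Using $[L_{j-1},L_{-1}]=-jL_{j-2}$, one gets
\[
[E,L_{-1}]=-\sum_{j\ge1}\frac{m^j}{(j-1)!}L_{j-2}=-m\sum_{k\ge0}\frac{m^k}{k!}L_{k-1}=-mE
\]
exactly. The $j=0$ term contributes nothing, since $[L_{-1},L_{-1}]=0$. The $j=1$ term $-mL_{-1}$ is simply the $k=0$ term of $-mE$. So there is no extra $+mL_{-1}$, and nothing for it to cancel against. With your formula the induction would leave a spurious term, e.g.\ $(E-L_{-1})L_{-1}=(L_{-1}-m)E-L_{-1}^2+mL_{-1}$.

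The correct relation $EL_{-1}=(L_{-1}-m)E$ has the same form as $e^{mt}L_{-1}=(L_{-1}-m)e^{mt}$. This is because $[L_{j-1},L_{-1}]=-jL_{j-2}$ and $[I_j,L_{-1}]=-jI_{j-1}$ have identical coefficients. The same induction then gives $EL_{-1}^i=(L_{-1}-m)^iE$, and subtracting $L_{-1}^{i+1}$ yields the claim. The paper compresses exactly this step into ``consequently''. You need to supply it with the correct commutator rather than by reinterpreting the notation.
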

\begin{proof}
We prove the first identity by induction on $i$. Since
$
I_nL_{-1}=L_{-1}I_n-nI_{n-1},
$
we have
\begin{align*}
e^{mt}L_{-1}
=
\sum_{n=0}^\infty \frac{m^n}{n!}I_nL_{-1}
=
L_{-1}e^{mt}-\sum_{n=1}^\infty \frac{m^n}{(n-1)!}I_{n-1}
=
(L_{-1}-m)e^{mt}.
\end{align*}
Thus the assertion holds for $i=1$. Assume that
$
e^{mt}L_{-1}^i=(L_{-1}-m)^i e^{mt}
$,
then
\begin{align*}
e^{mt}L_{-1}^{i+1}
&=e^{mt}L_{-1}^iL_{-1}
=(L_{-1}-m)^i e^{mt}L_{-1}
=(L_{-1}-m)^{i+1}e^{mt}.
\end{align*}
Therefore, for all $i\in \mathbb Z_+$,
$
e^{mt}L_{-1}^i=(L_{-1}-m)^i e^{mt}.
$
Finally,
we obtain
\begin{align*}
\Bigl((e^{mt}-1)\frac{d}{dt}\Bigr)L_{-1}^i
=
e^{mt}\frac{d}{dt}L_{-1}^i-L_{-1}^{i+1}
=
(L_{-1}-m)^i e^{mt}\frac{d}{dt}-L_{-1}^{i+1}.
\end{align*}
\end{proof}

Let $\mathcal{C}$ denote the category of all non-trivial $Q$-modules satisfying the following condition:

For any $0\neq v\in V$, there exists $r\in
\mathbb{Z}_{+}$ such that $L_{r+i}v=I_{r+i}v=0$ for all $i\ge 1.$

The minimal such $r$ is called the order of $v$, denoted by ${\rm ord}(v)$. Choose $s\in \mathbb{Z}_{+}$ minimal such that the set $\{v\in V | L_{s+i}v=I_{s+i}v=0 \, \text{for all} \,i\ge 1\}$ is non-zero. Denote this set by $V_{\mathfrak b}$. Then $V_{\mathfrak b}$ is a $P$-module.
\begin{proposition}\label{prop4.6}
Let $V$ be an irreducible $Q$-module in $\mathcal{C}$.
 Then the following statements hold:

\begin{enumerate}
    \item For each $p\in\mathbb Z_+$, the map
    $ L_{-1}^p\big|_{V_{\mathfrak{b}}}:V_{\mathfrak{b}}\to V$ is injective.
    \item $V=\bigoplus_{i\geq 0}L_{-1}^iV_{\mathfrak{b}}$.
    In particular, $ V\cong \mathbb C[L_{-1}]\otimes V_{\mathfrak{b}}$ as vector spaces.
\end{enumerate}
\end{proposition}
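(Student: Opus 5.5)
The plan is to follow the standard argument for Virasoro-type algebras, with a weight argument driven by $L_0$ acting on $V_{\mathfrak b}$ and by commutators with $L_{s+1}$ or $I_{s}$.

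Recall the structure. $V_{\mathfrak b}$ consists of vectors annihilated by $L_{s+i},I_{s+i}$ for $i\ge1$, where $s$ is the minimal order occurring in $V$. Take $0\ne v\in V_{\mathfrak b}$. We have $[L_{k},L_{-1}]=-(k+1)L_{k-1}$ and $[I_k,L_{-1}]=-kI_{k-1}$, reading these off from $[L_m,L_n]=(n-m)L_{m+n}$ and $[L_m,I_n]=nI_{m+n}$.

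For (1), argue by induction on $p$. Suppose $L_{-1}^p v=0$ with $v\neq 0$ and $p$ minimal. Apply $L_{s+1}$. Repeated use of the commutator gives
\[
L_{s+1}L_{-1}^p v=\sum_{j} c_j L_{-1}^{p-1-j} L_{s} L_{-1}^{j}v,
\]
which expands in terms of $L_{-1}^{p-1}L_sv$ plus terms involving $L_{s-1},\dots$ applied after fewer $L_{-1}$'s.

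A cleaner route, which I would actually use, is to introduce the $\mathbb Z_{\ge0}$ filtration by $L_{-1}$-degree. The key identity is
\[
L_{s+1}L_{-1}^p v=-p(s+2)L_{-1}^{p-1}L_sv+(\text{terms }L_{-1}^{j}w,\ j<p-1).
\]
If instead $s+1$ is replaced by $I_{s+1}$, the leading term is $-p(s+1)L_{-1}^{p-1}I_sv$. Since $s$ is minimal and $v\ne0$, either $L_sv\ne0$ or $I_sv\ne0$; otherwise $v$ would have order $<s$, contradicting minimality. (When $s=0$ one uses $L_0$ or $I_0$ and minimality of $s\in\mathbb Z_+$ differently, namely nontriviality of $V$.) Moreover $L_sv, I_sv\in V_{\mathfrak b}$, because $[L_{s+i},L_s]$ and the other brackets land in indices $>s$. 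Hence a relation $L_{-1}^pv=0$ produces a shorter relation $L_{-1}^{p-1}w=0$ with $w\ne0$ in $V_{\mathfrak b}$, up to lower filtration terms. That makes the induction work once the statement is strengthened to linear independence, as in (2).

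For (2), I would prove simultaneously that $\sum_{i=0}^p L_{-1}^i v_i=0$ with $v_i\in V_{\mathfrak b}$ forces all $v_i=0$, by induction on $p$. Apply $L_{s+1}$ and $I_{s+1}$ and compare the top components, reducing $p$. This gives directness of the sum. Next, $\sum_i L_{-1}^iV_{\mathfrak b}$ is a $Q$-submodule. To see this, use $L_{-1}$-stability, and for $x\in\{L_k,I_k: k\ge0\}$ move $x$ past $L_{-1}^i$; this produces elements $L_{-1}^j x' v$ with $x'v\in V_{\mathfrak b}$. The point to check here is that $L_kV_{\mathfrak b}\subseteq V_{\mathfrak b}$ and $I_kV_{\mathfrak b}\subseteq V_{\mathfrak b}$ for $k\ge0$, which follows from brackets raising the index. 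Irreducibility of $V$ then gives equality, and the vector space isomorphism $V\cong\mathbb C[L_{-1}]\otimes V_{\mathfrak b}$ follows.

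The main obstacle is the degenerate case in which the leading coefficient vanishes. The coefficient $-p(s+2)$ is nonzero, but when $I_s$ must be used the coefficient is $-ps$. This vanishes if $s=0$, and in that case one must use $L$'s instead, exploiting nontriviality. I would handle this by choosing between $L_{s+1}$ and $I_{s+1}$ according to which of $L_sv, I_sv$ is nonzero, and by treating $s$ small separately.
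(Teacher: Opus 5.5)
Away from the lowest value of $s$ (see below), your argument is correct, but it is organized differently from the paper's. The paper tests against $L_{s+p}$ and $I_{s+p}$ rather than $L_{s+1}$ and $I_{s+1}$. In the expansion of $L_{s+p}L_{-1}^pv$, every term $L_{-1}^{p-k}L_{s+p-k}v$ with $k<p$ has index $>s$ and vanishes. This leaves the closed formulas $L_{s+p}L_{-1}^pv=(-1)^p\prod_{j=2}^{p+1}(s+j)\,L_sv$ and $I_{s+p}L_{-1}^pv=(-1)^p\prod_{j=1}^{p}(s+j)\,I_sv$. The same two operators also kill $\sum_{i<p}L_{-1}^iV_{\mathfrak b}$. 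So (1) and $L_{-1}^pV_{\mathfrak b}\cap\sum_{i<p}L_{-1}^iV_{\mathfrak b}=0$ follow with no induction and no lower-order terms.

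Your operators lower the $L_{-1}$-degree only by one, so you need the triangular induction on the strengthened independence statement. To invoke the induction hypothesis you must know that all lower terms lie in $\sum_{j\le p-2}L_{-1}^jV_{\mathfrak b}$, and you should say why. It holds because $V_{\mathfrak b}$ is $P$-stable, and the only term of $L_{s+1}L_{-1}^pv$ not of the form $L_{-1}^j(Pv)$ is $(-1)^s\binom{p}{s+2}(s+2)!\,L_{-1}^{p-s-1}v$. Keeping this last term out of degree $p-1$ uses $s\ge1$. Your version gets (1) and directness from a single statement; the paper's identities are shorter and hold for every $s\ge0$.

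Both proofs end with ``$L_sv=I_sv=0$ contradicts the minimality of $s$'', which requires $s-1$ to be an admissible order. With orders in $\mathbb Z_+$, as literally written in the definition of $\mathcal C$, this step fails at $s=1$, and so does (2). Take $V=U(Q)\otimes_{U(P)}\mathbb C v$ with $L_jv=I_jv=0$ for $j\ge1$, $L_0v=\mu v$, $I_0v=cv$, $c\ne0$.
\begin{itemize}
\item Since $I_1L_{-1}^jv=-jc\,L_{-1}^{j-1}v$, $V$ is irreducible.
\item $V$ is nontrivial and lies in $\mathcal C$ with $s=1$.
\item Both $v$ and $L_{-1}v$ lie in $V_{\mathfrak b}$, because $L_jL_{-1}v=-(j+1)L_{j-1}v=0$ and $I_jL_{-1}v=-jI_{j-1}v=0$ for $j\ge2$. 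Hence $V_{\mathfrak b}\cap L_{-1}V_{\mathfrak b}\ne0$.
\end{itemize}
So orders must be counted in $\mathbb N$. The boundary case then becomes $s=0$, which is exactly the case your parenthetical remark leaves unproved.

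At $s=0$ the missing ingredient is that an irreducible nontrivial $V$ has no nonzero $P$-invariant vector. Indeed, if $Pv=0$ then $V=\mathbb C[L_{-1}]v$. Either $L_{-1}v=0$, so $V$ is trivial, or $U(Q)L_{-1}v=\operatorname{span}\{L_{-1}^jv\mid j\ge1\}$ is a proper submodule, as the $L_0$-eigenvalues $-j$ show. With this fact, the paper's $L_p,I_p$ give $L_0v_p=I_0v_p=0$ and hence $v_p=0$.

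Your $L_1,I_1$ do not suffice at $s=0$. The top components are $p(p-1-2L_0)v_p$ and $-pI_0v_p$. Both vanish when $I_0v_p=0$ and $L_0v_p=\tfrac{p-1}{2}v_p$. This happens for the generator of the induced module above with $c=0$ and $\mu=\tfrac{p-1}{2}$, which is irreducible for $p\ge2$.

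Finally, the ``$-ps$'' in your last paragraph is a slip: $I_{s+1}$ contributes $-p(s+1)$, which never vanishes. The actual degeneracy at $s=0$ is the extra top-degree term $p(p-1)L_{-1}^{p-1}v_p$ produced by $L_1$.
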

\begin{proof}
{\rm (1)}
Let $v\in V_{\mathfrak{b}}$ satisfy $L_{-1}^pv=0.$
    Applying $L_{s+p}$ and $I_{s+p}$ to both sides, we obtain
    \begin{align*}
    L_{s+p}L_{-1}^pv=0,\qquad I_{s+p}L_{-1}^pv=0.
    \end{align*}
Meanwhile,
    \begin{align*}
   & L_{s+p}L_{-1}^pv
    =
    (-1)^p\prod_{j=2}^{p+1}(s+j)L_sv, \
 I_{s+p}L_{-1}^pv
    =
    (-1)^p\prod_{j=1}^{p}(s+j)I_sv.
    \end{align*}
    It follows that
    $L_sv=I_sv=0$.
Thus $v$ is annihilated by $L_j$ and $I_j$ for all $j\geq s$. The minimality of $s$ gives $v=0$.

{\rm (2)}
Set $U:=\sum_{i\geq 0}L_{-1}^iV_{\mathfrak{b}}$.
We claim that $U$ is a $Q$-submodule of $V$.

Clearly, $L_{-1}U\subseteq U$. For $m\geq 0$, by repeating use of
$
[L_m,L_{-1}]=-(m+1)L_{m-1},
$
we have that $L_mL_{-1}^iv$ is a linear combination of terms $L_{-1}^jL_rv$, where
$0\leq j\leq i$ and $r\geq 0$. Since $V_{\mathfrak{b}}$ is stable under $P$, it follows that $L_mU\subseteq U$.

Similarly, by
$
I_nL_{-1}=L_{-1}I_n-nI_{n-1},
$
we see that $I_nL_{-1}^iv$ is a linear combination of terms $L_{-1}^jI_rv$, where
$0\leq j\leq i$ and $r\geq 0$, thus $I_nU\subseteq U$.

Hence $U$ is a nonzero $Q$-submodule of $V$. Since $V$ is irreducible, we conclude that $U=V$. Therefore,
\begin{align*}
V=\sum_{i\geq 0}L_{-1}^iV_{\mathfrak{b}}.
\end{align*}
It remains to show that the above sum is direct. We just need to prove for every $p\geq 1$,
\begin{align*}
L_{-1}^{p}V_{\mathfrak{b}}\cap \sum_{i=0}^{p-1}L_{-1}^iV_{\mathfrak{b}}=0.
\end{align*}
Now let $0\neq L_{-1}^{p}v\in L_{-1}^{p}V_{\mathfrak{b}}$, where $v\in V_{\mathfrak{b}}$. Then
$
L_{s+p}L_{-1}^{p}v\neq 0
$ or
$
I_{s+p}L_{-1}^{\,p}v
\neq 0.
$
However, for every $0\leq i\leq p-1$,
$
L_{s+p}L_{-1}^{i}V_{\mathfrak{b}}=0$
and
$I_{s+p}L_{-1}^{i}V_{\mathfrak{b}}=0$, a contradiction.
Thus
\begin{align*}
L_{-1}^{p}V_{\mathfrak{b}}\cap \sum_{i=0}^{p-1}L_{-1}^{i}V_{\mathfrak{b}}=0.
\end{align*}
Then
$
\sum_{i\geq 0}L_{-1}^{i}V_{\mathfrak{b}}
=
\bigoplus_{i\geq 0}L_{-1}^{i}V_{\mathfrak{b}},
$
and
$
V=\bigoplus_{i\geq 0}L_{-1}^{i}V_{\mathfrak{b}}.
$
\end{proof}

\begin{theorem}
    Let $\lambda\in \mathbb{C}^{*},\alpha\in \mathbb{C}$, $\mathbf h\in\mathcal T_\alpha$ and let $V$ be an irreducible module in $\mathcal C$. Then $T(V,\Psi)$ has a series of $\mathcal{H}$-submodules
     \begin{align*}
        V_{\mathfrak b}^{(0)}\subsetneq V_{\mathfrak b}^{(1)}\subsetneq \ldots \subsetneq V_{\mathfrak b}^{(p)}\subsetneq \ldots,
    \end{align*}
    where $V_{\mathfrak b}^{(p)}=\sum_{i=0}^{p}L_{-1}^{i}V_{\mathfrak b}\otimes \mathbb{C}[t,s]$.
\end{theorem}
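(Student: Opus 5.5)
The plan is to show two things: each $V_{\mathfrak b}^{(p)}$ is stable under $L_m$ and $I_m$ for all $m\in\mathbb Z$, and the inclusions are strict. The key observation is that the operators acting on the $V$-factor in \eqref{5.1}--\eqref{5.2} lie in the subalgebra $P$. They never involve $L_{-1}$, which is the only generator that raises the $L_{-1}$-filtration.

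\textbf{Step 1: both operators lie in $P$.} First, $e^{mt}=\sum_{i\ge0}\frac{m^i}{i!}I_i$ involves only $I_i$ with $i\ge 0$. Second, in $(e^{mt}-1)\frac{d}{dt}=\sum_{i\ge0}\frac{m^i}{i!}L_{i-1}-L_{-1}$ the $i=0$ term is exactly $L_{-1}$ and cancels. Hence
\begin{align*}
(e^{mt}-1)\frac{d}{dt}=\sum_{k\ge0}\frac{m^{k+1}}{(k+1)!}L_k .
\end{align*}
Because $V\in\mathcal C$, each vector is killed by $L_{r+i},I_{r+i}$ for $i\ge1$, so both operators act on any given $v$ as finite sums of elements of $P$.

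\textbf{Step 2: elements of $P$ preserve the filtration of $V$.} Set $U_p=\sum_{i=0}^pL_{-1}^iV_{\mathfrak b}$. I will show $L_kU_p\subseteq U_p$ and $I_nU_p\subseteq U_p$ for $k,n\ge0$, by induction on $i$, exactly as in the proof of Proposition~\ref{prop4.6}(2). For the $L$'s, use
\begin{align*}
L_kL_{-1}^{i}v=L_{-1}L_kL_{-1}^{i-1}v-(k+1)L_{k-1}L_{-1}^{i-1}v .
\end{align*}
The first term lies in $L_{-1}U_{i-1}\subseteq U_i$ by induction. When $k\ge1$, the second term lies in $U_{i-1}$ by induction. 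When $k=0$, the second term is $-L_{-1}^iv\in U_i$. The base case $i=0$ is $P$-stability of $V_{\mathfrak b}$.

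For the $I$'s, use $I_nL_{-1}=L_{-1}I_n-nI_{n-1}$. This gives $I_nL_{-1}^iv$ as a combination of $L_{-1}^jI_rv$ with $j\le i$ and $r\ge0$, which lies in $U_i$.

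I expect this bookkeeping to be the main point needing care. In particular, the $k=0$ case produces $L_{-1}$ and yet stays within degree $i$.

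\textbf{Step 3: submodule property.} By \eqref{5.1} and \eqref{5.2},
\begin{align*}
L_m(U_p\otimes\mathbb C[t,s])&\subseteq U_p\otimes\mathbb C[t,s]+U_p\otimes\mathbb C[t,s],\\
I_m(U_p\otimes\mathbb C[t,s])&\subseteq U_p\otimes\mathbb C[t,s].
\end{align*}
The $\Psi$-factor is acted on inside $\mathbb C[t,s]$, and the $V$-factor is handled by Steps 1--2. Hence each $V_{\mathfrak b}^{(p)}$ is an $\mathcal H$-submodule, and $V_{\mathfrak b}^{(p)}\subseteq V_{\mathfrak b}^{(p+1)}$ holds trivially.

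\textbf{Step 4: strictness.} By Proposition~\ref{prop4.6}(2), $V=\bigoplus_{i\ge0}L_{-1}^iV_{\mathfrak b}$. By Proposition~\ref{prop4.6}(1), $L_{-1}^{p+1}V_{\mathfrak b}\neq0$ because $V_{\mathfrak b}\neq0$. Therefore $U_p\subsetneq U_{p+1}$. Tensoring with the nonzero space $\mathbb C[t,s]$ preserves this strict inclusion, so $V_{\mathfrak b}^{(p)}\subsetneq V_{\mathfrak b}^{(p+1)}$.
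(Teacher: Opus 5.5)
Your proposal is correct, but it is organized differently from the paper's proof.

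\textbf{The paper's route.} It inducts on $p$. Assuming $V_{\mathfrak b}^{(p-1)}$ is a submodule, it uses Lemma~\ref{lem:admissible-shift} to commute the operators past $L_{-1}^p$ in closed form,
$e^{mt}L_{-1}^p=(L_{-1}-m)^pe^{mt}$ and $\bigl((e^{mt}-1)\frac{d}{dt}\bigr)L_{-1}^p=(L_{-1}-m)^pe^{mt}\frac{d}{dt}-L_{-1}^{p+1}$.
It then observes that the top term $L_{-1}^{p+1}$ cancels.

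\textbf{Your route.} You perform this cancellation once, at the level of the operator, by writing $(e^{mt}-1)\frac{d}{dt}=\sum_{k\ge0}\frac{m^{k+1}}{(k+1)!}L_k$, which acts locally finitely as an element of $P$. Everything then reduces to the $P$-stability of each $U_p$. You prove this generator by generator, using the commutators of Proposition~\ref{prop4.6}(2) refined to track the $L_{-1}$-degree. The case $k=0$, where a commutator produces $L_{-1}$, is handled correctly.

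\textbf{What your approach buys.}
\begin{itemize}
\item It is more elementary and shows plainly why the filtration is preserved: the operators acting on $V$ never involve $L_{-1}$.
\item It needs no shift lemma. The paper derives the lemma's second identity by treating $e^{mt}\frac{d}{dt}$ as the product $e^{mt}L_{-1}$. In the theorem's proof it actually uses it as $G(m)=\sum_i\frac{m^i}{i!}L_{i-1}$. The identity is still true, because $[L_{-1},G(m)]=mG(m)$, just as $[L_{-1},e^{mt}]=me^{mt}$.
\item It proves strictness of the inclusions via Proposition~\ref{prop4.6}, which the paper's proof leaves implicit.
\end{itemize}

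\textbf{What the paper's approach buys.} It gives explicit formulas. Applied to $v\in V_{\mathfrak b}$ and read modulo $\sum_{i<p}L_{-1}^iV_{\mathfrak b}$, one gets $(L_{-1}-m)^pL_{-1}v-L_{-1}^{p+1}v\equiv -pm\,L_{-1}^pv$. This is exactly the shift $-pm$ in the induced action $\ast_p$ used afterwards. In your framework the same information would have to be extracted by tracking the $k=0$ terms $-L_{-1}^iv$ modulo $U_{i-1}$.
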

\begin{proof}
    We prove by induction on $p$. It is easy to check $V_{\mathfrak b}^{(0)}$ is a submodule of $T(V,\Psi)$. Suppose that $V_{\mathfrak b}^{(0)},\ldots,V_{\mathfrak b}^{(p-1)}$ are all $\mathcal{H}$-submodules. Take $L_{-1}^{p}v\otimes f+v^{(p-1)}\in V_{\mathfrak b}^{(p)}$, where $v\in V_{\mathfrak b},v^{(p-1)}\in V_{\mathfrak b}^{(p-1)}$. By Lemma \ref{lem:admissible-shift} and \eqref{5.1}, we obtain
    \begin{align*}
        L_{m}&(L_{-1}^{p}v\otimes f+v^{(p-1)})\\
        =&L_{-1}^{p}v\otimes L_{m}f+(e^{mt}-1)\frac{d}{dt} L^{p}_{-1} v\otimes I_{m}f+L_{m}(v^{(p-1)})\\
        =&L_{-1}^{p}v\otimes L_{m}f+((L_{-1}-m)^{p}e^{mt}\frac{d}{dt}-L_{-1}^{p+1})v\otimes I_{m}f+L_{m}(v^{(p-1)})\\
        =&L_{-1}^{p}v\otimes L_{m}f+((L_{-1}-m)^{p}(L_{-1}+\sum_{i=1}^{\infty}\frac{m^{i}}{i!}L_{i-1})-L_{-1}^{p+1}))v\otimes I_{m}f\\
        &+L_{m}(v^{(p-1)})\in V_{\mathfrak b}^{(p)},\\
        I_{m}&(L_{-1}^{p}v\otimes f+v^{(p-1)})\\
        =&e^{mt}L_{-1}^{p}v\otimes I_{m}f+I_{m}(v^{(p-1)})\\
        =&(L_{-1}-m)^{p} e^{mt}v\otimes I_{m}f+I_{m}(v^{(p-1)})\\
        =&(L_{-1}-m)^{p}\sum_{i=0}^{\infty}\frac{m^{i}}{i!}I_{i}v\otimes I_{m}f+I_{m}(v^{(p-1)})\in V_{\mathfrak b}^{(p)}.
    \end{align*}
This shows that $V_{\mathfrak b}^{(p)}$ is a submodule of $T(V,\Psi)$.
\end{proof}

\begin{corollary}
For each $p\geq 0$, with $V_{\mathfrak b}^{(-1)}=0$, we have 
\begin{align*}
V_{\mathfrak{b}}^{(p)}/V_{\mathfrak b}^{(p-1)}
\cong
V_{\mathfrak{b}}\otimes \mathbb C[t,s]
\end{align*}
as vector spaces.
\end{corollary}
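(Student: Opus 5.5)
The plan is to read the quotient off from the vector space decomposition of $V$ in Proposition~\ref{prop4.6}, and then tensor with $\mathbb C[t,s]$. By Proposition~\ref{prop4.6}(2) we have $V=\bigoplus_{i\geq0}L_{-1}^iV_{\mathfrak b}$. Since tensoring with $\mathbb C[t,s]$ over $\mathbb C$ commutes with direct sums, it follows that
\begin{align*}
V_{\mathfrak b}^{(p)}=\Bigl(\bigoplus_{i=0}^{p}L_{-1}^iV_{\mathfrak b}\Bigr)\otimes\mathbb C[t,s]
=V_{\mathfrak b}^{(p-1)}\oplus\bigl(L_{-1}^pV_{\mathfrak b}\otimes\mathbb C[t,s]\bigr)
\end{align*}
as vector spaces; for $p=0$ we use $V_{\mathfrak b}^{(-1)}=0$.

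Consequently the quotient $V_{\mathfrak b}^{(p)}/V_{\mathfrak b}^{(p-1)}$ is isomorphic, as a vector space, to the complement $L_{-1}^pV_{\mathfrak b}\otimes\mathbb C[t,s]$. The isomorphism is given by composing the inclusion of the complement with the quotient map. This composite is injective because the sum is direct, and surjective because the two summands together span $V_{\mathfrak b}^{(p)}$.

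It then suffices to identify $L_{-1}^pV_{\mathfrak b}$ with $V_{\mathfrak b}$. By Proposition~\ref{prop4.6}(1), the map $L_{-1}^p|_{V_{\mathfrak b}}$ is injective for $p\geq1$, and for $p=0$ it is the identity. Hence $L_{-1}^p:V_{\mathfrak b}\to L_{-1}^pV_{\mathfrak b}$ is a linear bijection. Tensoring this bijection with the identity on $\mathbb C[t,s]$ gives $L_{-1}^pV_{\mathfrak b}\otimes\mathbb C[t,s]\cong V_{\mathfrak b}\otimes\mathbb C[t,s]$. Concretely, the resulting isomorphism is $v\otimes f\mapsto L_{-1}^pv\otimes f+V_{\mathfrak b}^{(p-1)}$.

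The only point needing care is the directness of the sum, and that is already established in Proposition~\ref{prop4.6}(2). I therefore expect no real obstacle. The statement concerns only vector spaces, so no compatibility with the $\mathcal H$-action has to be checked.
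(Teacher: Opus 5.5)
Your proposal is correct and is essentially the paper's argument. The paper uses the same map $v\otimes f\mapsto L_{-1}^pv\otimes f+V_{\mathfrak b}^{(p-1)}$, gets surjectivity for free, and derives injectivity from Proposition~\ref{prop4.6}(1) together with the directness $L_{-1}^pV_{\mathfrak b}\cap\sum_{i=0}^{p-1}L_{-1}^iV_{\mathfrak b}=0$; you only rephrase this as splitting off the direct complement $L_{-1}^pV_{\mathfrak b}\otimes\mathbb C[t,s]$.
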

\begin{proof}
Define
$
\varphi_p:V_{\mathfrak{b}}\otimes \mathbb C[t,s]\longrightarrow V_{\mathfrak{b}}^{(p)}/V_{\mathfrak{b}}^{(p-1)}
$ by
$
\varphi_p(v\otimes f(t,s))
=
L_{-1}^pv\otimes f(t,s)+V_{\mathfrak{b}}^{(p-1)}$,
for $v\in V_{\mathfrak{b}},\ f(t,s)\in\mathbb C[t,s]$.
It is clearly surjective. Moreover, by $(1)$ of Proposition \ref{prop4.6} and
$L_{-1}^pV_{\mathfrak{b}}\cap \sum_{i=0}^{p-1}L_{-1}^iV_{\mathfrak{b}}=0,
$
we get $\varphi_p$ is injective.
\end{proof}
Since $V_{\mathfrak{b}}^{(p)}/V_{\mathfrak{b}}^{(p-1)}$ is isomorphic to $V_{\mathfrak{b}}^{(0)}$ as vector spaces, it induces a new $\mathcal H$-module structure on $V_{\mathfrak{b}}^{(0)}$ for each $p\geq 0$:
\begin{align}
I_m\ast_p(v\otimes f)=&e^{mt}v\otimes I_mf,\label{eq:Imp}\\
L_m\ast_p(v\otimes f)
=&
v\otimes L_mf+
\left(\left((e^{mt}-1)\frac{d}{dt}-pm\right)v\right)\otimes I_mf.\label{eq:Lmp}
\end{align}
Let $V_{\mathfrak b}^{\langle p\rangle}$ be the $P$-module with underlying vector space $V_{\mathfrak b}$, whose action is given by
\begin{align*}
L_0^{\langle p\rangle}=L_0-p\,\mathrm{Id},\qquad
L_m^{\langle p\rangle}=L_m\ (m>0),\qquad
I_n^{\langle p\rangle}=I_n.
\end{align*}
Combining the above vector space isomorphism with the induced $\mathcal{H}$-module structure, we obtain the following theorem.
\begin{theorem}
    For each $p\geq 0$, the quotient module $V_{\mathfrak b}^{(p)}/V_{\mathfrak b}^{(p-1)}$ is isomorphic to $V_{\mathfrak b}^{\langle p\rangle}\otimes\mathbb C[t,s]$ endowed
with the $\mathcal H$-action $\ast_p$ defined by
\eqref{eq:Imp}--\eqref{eq:Lmp}, where $V_{\mathfrak{b}}^{(-1)}=0$.
\end{theorem}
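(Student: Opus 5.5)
The isomorphism will be $\varphi_p$ from the preceding corollary, viewed as a map $V_{\mathfrak b}^{\langle p\rangle}\otimes\mathbb C[t,s]\to V_{\mathfrak b}^{(p)}/V_{\mathfrak b}^{(p-1)}$, $v\otimes f\mapsto L_{-1}^pv\otimes f+V_{\mathfrak b}^{(p-1)}$. That corollary already shows it is a linear bijection. It remains to check $\varphi_p(x\ast_p(v\otimes f))=x\,\varphi_p(v\otimes f)$ for $x=I_m,L_m$. Once this holds, the formulas \eqref{eq:Imp}--\eqref{eq:Lmp} automatically define an $\mathcal H$-module structure, since they are transported from a genuine quotient module. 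For the $L_0$-part of the $\ast_p$ action, I will use the $P$-module $V_{\mathfrak b}^{\langle p\rangle}$, whose only change is the shift $L_0\mapsto L_0-p$.

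For $I_m$, the first identity of Lemma \ref{lem:admissible-shift} gives
$$I_m(L_{-1}^pv\otimes f)=(L_{-1}-m)^pe^{mt}v\otimes I_mf.$$
Here $e^{mt}v=\sum_i\frac{m^i}{i!}I_iv$ is a finite sum and lies in $V_{\mathfrak b}$, because $V_{\mathfrak b}$ is $P$-stable. Expanding $(L_{-1}-m)^p$, every term other than $L_{-1}^p$ lies in $\sum_{i<p}L_{-1}^iV_{\mathfrak b}\otimes\mathbb C[t,s]=V_{\mathfrak b}^{(p-1)}$. Modulo $V_{\mathfrak b}^{(p-1)}$ we are left with $L_{-1}^p(e^{mt}v)\otimes I_mf=\varphi_p(I_m\ast_p(v\otimes f))$.

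For $L_m$, I will use the second identity of Lemma \ref{lem:admissible-shift}. Writing $e^{mt}\frac{d}{dt}=L_{-1}+D_m$ with $D_m=\sum_{i\ge1}\frac{m^i}{i!}L_{i-1}$, which preserves $V_{\mathfrak b}$, we get
$$(L_{-1}-m)^p(L_{-1}+D_m)v-L_{-1}^{p+1}v.$$
Expanding $(L_{-1}-m)^p=L_{-1}^p-pmL_{-1}^{p-1}+\cdots$, the $L_{-1}^{p+1}v$ terms cancel. The term $L_{-1}^pD_mv$ survives, and $-pmL_{-1}^{p-1}L_{-1}v=-pmL_{-1}^pv$ also survives. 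All remaining terms lie in $\sum_{i<p}L_{-1}^iV_{\mathfrak b}$. Modulo $V_{\mathfrak b}^{(p-1)}$ the result is therefore $L_{-1}^p\big((D_m-pm)v\big)\otimes I_mf$, and $D_m=(e^{mt}-1)\frac{d}{dt}$ on $V_{\mathfrak b}$. Adding the term $L_{-1}^pv\otimes L_mf$ from \eqref{5.1} recovers $\varphi_p$ applied to \eqref{eq:Lmp}.

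The main obstacle is the bookkeeping of this expansion. The point that needs care is that $D_mv$ and $e^{mt}v$ involve only $L_j,I_j$ with $j\ge0$, so they stay inside $V_{\mathfrak b}$ and do not create new $L_{-1}$ factors. A second point is identifying the $-pm$ correction with the $L_0$-shift in $V_{\mathfrak b}^{\langle p\rangle}$: the $m^1$ coefficient of $D_m$ is $L_0$, so $-pm$ amounts to replacing $L_0$ by $L_0-p$ inside $(e^{mt}-1)\frac{d}{dt}$. This is exactly the action in the statement, and it completes the identification.
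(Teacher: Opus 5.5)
Your proof is correct and follows the paper's route. The paper likewise transports the structure along the bijection $\varphi_p$ and reads off \eqref{eq:Imp}--\eqref{eq:Lmp} from the Lemma~\ref{lem:admissible-shift} expansion used to show that $V_{\mathfrak b}^{(p)}$ is a submodule; you make explicit the extraction of the $L_{-1}^p$-component modulo $V_{\mathfrak b}^{(p-1)}$, which the paper leaves implicit. Your remark that the $-pm$ term is exactly the twist $L_0\mapsto L_0-p$ inside $(e^{mt}-1)\frac{d}{dt}$, and so must be counted only once rather than applied again on top of $V_{\mathfrak b}^{\langle p\rangle}$, gives the consistent reading of the statement.
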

\subsection{The rank one extended $W(2,2)$ algebra $\mathcal T$}
Recall that the rank one extended $W(2,2)$ algebra $\mathcal T$ is the Lie algebra with basis
\begin{align*}
\{L_m,W_m,H_m\mid m\in\mathbb Z\},
\end{align*}
subject to the relations
\begin{equation*}
\begin{aligned}
&[L_m,L_n]=(n-m)L_{m+n},\quad
[L_m,W_n]=(n-m)W_{m+n},\\&
[L_m,H_n]=nH_{m+n},\
[W_m,W_n]=[W_m,H_n]=[H_m,H_n]=0.
\end{aligned}
\end{equation*}
It can be realized as a subalgebra of $\overline{\mathcal B}(-1)$. Indeed, it is isomorphic to the subspace
\begin{align*}
\mathcal{D}=\operatorname{span}\{-L_{m,0},\,W_{m,0},\,W_{m,1}\mid m\in\mathbb Z\}.
\end{align*}
The inclusion $\mathcal T\cong\mathcal D\hookrightarrow \overline{\mathcal B}(-1)$ induces the restriction functor
\begin{align*}
\operatorname{Res}^{\mathcal B(-1)}_{\mathcal T}:
\mathcal F_1(\mathcal B(-1))\longrightarrow
\mathcal T\text{-}\mathrm{Mod}.
\end{align*}

Therefore, by restricting the $\mathcal {B}(-1)$-module
$\Omega\big(\lambda,\alpha,\beta,\gamma,\mathbf h(\alpha)\big)$ defined by \eqref{C3.1}--\eqref{W3.3}
to the subalgebra $\mathcal T$, we obtain a $\mathcal T$-module, which we define as follows.
 \begin{definition}\label{def:rank-one-extended-W-module}
     For $\lambda\in \mathbb{C}^{*}$, $\alpha,\beta_{1}\in\mathbb{C}$ and $\mathbf h\in\mathcal T_\alpha$, define  the $\mathcal T$-module structure on $\Psi(\lambda,\alpha,\beta_{1},\mathbf{h})=\mathbb{C}[t,s]$ as follows
     \begin{align*}
         L_{m} f(t,s)=&-\lambda^{m}(s- h_{m}(-t))f(t,s+m)-m\lambda^{m}(t+m \alpha)\frac{\partial}{\partial t}f(t,s+m),\\
     W_{m} f(t,s)=& \lambda^{m}(t+m\alpha)f(t,s+m),\
     H_{m} f(t,s)=\lambda^{m}\beta_{1}f(t,s+m).
	     \end{align*}
 \end{definition}
 Applying the restriction functor to
$\Omega\big(\lambda,\alpha,\beta,\gamma,\mathbf h(\alpha)\big)$ under the actions \eqref{C3.1}--\eqref{W3.3} gives
\begin{align*}
\operatorname{Res}^{\mathcal B(-1)}_{\mathcal T}
\Omega\big(\lambda,\alpha,\beta,\gamma,\mathbf h(\alpha)\big)
\cong
\Psi(\lambda,\alpha,\beta_1,\mathbf h).
\end{align*}
 \begin{theorem}
 The following holds:
 \begin{enumerate}
     \item $\Psi(\lambda_{1},\alpha_{1},\beta_{1},\mathbf{h})\cong\Psi(\lambda_{2},\alpha_{2},\beta_{1}',\mathbf{g})$ if and only if
   $\lambda_{1}=\lambda_{2},\,
    \alpha_{1}=\alpha_{2},\,
    \beta_{1}=\beta_{1}'$ and $
    \mathbf{h}=\mathbf{g}.$
    \item $\Psi(\lambda,\alpha,\beta_{1},\mathbf{h})$ is an irreducible $\mathcal T$-module if and only if $\alpha\neq 0$.
    \item $\bigotimes_{i=1}^{m}\Psi(\lambda_{i},\alpha_{i},\beta_{1,i},\mathbf{h}_{i})\otimes V$ is irreducible if and only if $\lambda_{1},...,\lambda_{m}$ are pairwise distinct, $\alpha_{i}\neq 0$ for each $i$ and $V$ is an irreducible restricted $\mathcal T$-module.
 \end{enumerate}
 \end{theorem}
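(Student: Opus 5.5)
The plan is to reduce all three statements to results already proved for $\mathcal B(-1)$ and for $\mathcal W$. The key observation is that, under $\mathcal T\cong\mathcal D$, the subalgebra $\operatorname{span}\{L_m,W_m\}$ is identified with $\operatorname{span}\{-L_{m,0},W_{m,0}\}$. This is $\mathcal W$ modulo centre, since for $q=-1$ one has $\mathcal W=\operatorname{span}\{-L_{m,0},-W_{m,0}\}$. Moreover $H_m$ acts by $\lambda^m\beta_1\,\sigma^{-m}$, where $\sigma^{-m}$ is the shift $s\mapsto s+m$, with no $t$-derivative. The operators $L_{m,1}$ do not lie in $\mathcal D$. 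So, unlike the $\widetilde{\mathcal H}$ case (Theorem~\ref{thm5.2}), $\beta_1$ no longer produces a $\partial/\partial t$ term. This explains why the irreducibility criterion becomes simply $\alpha\neq0$.

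\emph{Part (1).} If $\phi:\Psi(\lambda_1,\alpha_1,\beta_1,\mathbf h)\to\Psi(\lambda_2,\alpha_2,\beta_1',\mathbf g)$ is a $\mathcal T$-isomorphism, restrict it to the copy of $\mathcal W$. Both sides become the modules $\Omega(\lambda,\alpha,\mathbf h)$ of Proposition~\ref{pro2.2} (with $q=-1$), up to the fixed reparametrization built into \eqref{L3.2}--\eqref{W3.3}. The isomorphism classification of \cite{CG17}, quoted in the proof of Theorem~\ref{prop3.2}, then gives $\lambda_1=\lambda_2$, $\alpha_1=\alpha_2$ and $\mathbf h=\mathbf g$. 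Next apply $\phi$ to $H_0$, which acts as the scalar $\beta_1$, resp.\ $\beta_1'$. This yields $\beta_1=\beta_1'$. The converse direction is trivial.

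\emph{Part (2).} For $\alpha\neq0$, the restriction to $\mathcal W$ is already simple by Proposition~\ref{pro2.2}, so $\Psi$ is simple. For $\alpha=0$, I will check directly that $t\,\mathbb C[t,s]$ is a proper submodule. $W_m$ multiplies by $\lambda^m t$ and shifts $s$. $H_m$ only shifts $s$. $L_m$ acts by multiplication by a polynomial, a shift, and $-m\lambda^m t\,\partial_t$, which preserves $t\,\mathbb C[t,s]$.

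\emph{Part (3).} For sufficiency I will rerun the proofs of Theorem~\ref{thm4.2} and of Case~1 of Theorem~\ref{thm4.3}. These use only the operators $W_{k,0}$ and $L_{k,0}$ for $k\gg0$, which all lie in $\mathcal D$, together with Proposition~\ref{prop4.1} and the restrictedness of $V$. In the final step, where $V'=\{w: \mathbb C[\mathbf t,\mathbf s]\otimes w\subseteq X\}$ is shown to be a submodule, I let $x$ range over $\mathcal T$ and use irreducibility of $V$ as a $\mathcal T$-module. The only point to watch is that the derivative extraction \eqref{eq:t-derivative-coefficient} needs $\alpha_i\neq0$, which is exactly the hypothesis.

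Necessity splits into three cases.
\begin{itemize}
\item If some $\alpha_i=0$, tensor the proper submodule from part (2) with the remaining factors.
\item If $V$ is reducible, tensor the $\Psi$-factors with a proper submodule of $V$.
\item If $\lambda_i=\lambda_j$ for some $i\neq j$, use the subspace $N_n$ from the theorem preceding Theorem~\ref{thm4.6}, built on the variables $x=s_i$ and $y=s_i+s_j$.
\end{itemize}
In the third case the $L$ and $W$ actions are handled exactly as there. The new operator $H_m$ only shifts $s_i$ and $s_j$ by $m$, hence $x\mapsto x+m$ and $y\mapsto y+2m$, so it does not raise the $x$-degree.

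I expect the main care point to be bookkeeping rather than substance. One must confirm that the identification of $\Psi|_{\mathcal W}$ with the Chen--Guo module is compatible with the sign conventions ($-L_{m,0}$ and $q=-1$), so that the parameter map is bijective and part (1) follows. One must also check that every operator used in the Section~4 arguments genuinely belongs to $\mathcal D$.
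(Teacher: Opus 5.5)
Your proposal is correct and follows essentially the paper's route. Part (1) comes from the isomorphism classification of the underlying rank-one $\mathcal W$-module together with the scalar action $H_0=\beta_1$. Part (2) comes from the submodule $t\,\mathbb C[t,s]$ together with simplicity when $\alpha\neq0$; here you cite Proposition~\ref{pro2.2}, whereas the paper redoes the $W_m$/Proposition~\ref{prop4.1} extraction directly. Part (3) comes from transporting the Section~4 arguments, which use only $W_k$ and $L_k$ for $k\gg0$.

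One small correction, which does not affect the argument: on a two-factor tensor product, $H_m$ acts as a sum of two single-variable shifts. These give $(x,y)\mapsto(x+m,y+m)$ and $(x,y)\mapsto(x,y+m)$ respectively, not a simultaneous shift $y\mapsto y+2m$. Each term still preserves the bound on $\deg_x$, so your conclusion stands.
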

\begin{proof}
The first statement follows immediately from Proposition \ref{prop3.2}.

We prove the second statement. If $\alpha=0$, according to the proof of Theorem \ref{thm3.4}, $t^{i}\mathbb C[t,s]$ is a
proper $\mathcal T$-submodule for each $i\in \mathbb{Z}_{+}$.

Now assume that $\alpha\neq 0$ and let $M$ be a nonzero $\mathcal T$-submodule of
$\mathbb C[t,s]$. We shall prove that $M=\mathbb C[t,s]$.

Take a nonzero element $f(t,s)\in M$, and write
$
f(t,s)=\sum_{i=0}^r a_i(t)s^i,
\, a_r(t)\neq 0.$
For every $m\in\mathbb Z$, we obtain
$
\lambda^{-m}W_m f(t,s)=(t+m\alpha)f(t,s+m)\in M.
$
Expanding the right-hand side as a polynomial in $m$, we obtain
\begin{align*}
\sum_{j=0}^{r+1} m^j f_j(t,s)\in M
\qquad \text{for all }m\in\mathbb Z,
\end{align*}
where $f_j(t,s)\in \mathbb C[t,s]$. Since $M$ is a vector subspace, by Proposition \ref{prop4.1},
we have $f_j(t,s)\in M$ for $0\le j\le r+1$. In particular,
the coefficient of $m^{r+1}$,  $\alpha a_r(t)\in M$.
Therefore $M$ contains a nonzero polynomial $g(t)\in \mathbb C[t]$.

Applying $W_0$ and $L_0$, we obtain $g(t)\mathbb C[t,s]\subseteq M$.
Moreover, as in the proof of Theorem~\ref{thm5.2},
we get $g'(t)\in M$.
It follows that $M=\mathbb C[t,s]$, so $\Psi(\lambda,\alpha,\beta_1,\mathbf h)$ is irreducible.
The third statement follows from Theorems~\ref{thm4.3} and \ref{thm4.6}, together with the first statement.
\end{proof}
This gives another family of irreducible modules over the rank one extended $W(2,2)$ algebra.

\section*{Acknowledgements}
The first author is partially supported by the Fundamental Research Funds for the Central Universities (No.~B250201222) and by the Young Faculty Overseas
Training Program (No.~202506710216), which is jointly funded by the China Scholarship Council and Hohai University.
Part of this paper was written during the first author's visit to Uppsala University, whose hospitality is gratefully acknowledged.
\section*{Authors' contributions}
All authors contributed equally to this work.

\section*{Data Availability Statement}
This manuscript has no associated data.

\section*{Conflicts of Interest}
The authors declare that they have no conflict of interest.

\end{document}